\newcommand{\scr}[1]{\mathscr{#1}}
\newcommand{\frk}[1]{\mathfrak{#1}}
\newcommand{\bb}[1]{\mathbb{#1}}
\newcommand{\cal}[1]{\mathcal{#1}}
\newcommand{\N}{\mathbb{N}}	% Numeri naturali
\newcommand{\R}{\mathbb{R}}	% Numeri reali
\newcommand{\Id}{\mathrm{Id}}	% Mappa identità
\newcommand{\Span}{\mathrm{span}}	% Span
\newcommand{\dd}{\,\mathrm{d}}	% 'd' di derivata
\newcommand{\de}{\partial}		% Derivata parziale
\newcommand{\THEN}{\Rightarrow}	% =>
\newcommand{\J}{\mathtt{J}}
\newcommand{\ad}{\mathrm{ad}}
\newcommand{\HH}{\bb H}
\newcommand{\Ad}{\operatorname{Ad}}
\newcommand{\vol}{\mathtt{vol}}
\newcommand{\Jac}{\mathrm{Jac}}
\newcommand{\Alt}{\mathrm{Alt}}
\newcommand{\MC}{\mathtt{MC}}
\newcommand{\trace}{\mathrm{trace}}
\newcommand{\sre}{\mathtt{SExp}}
\newcommand{\so}{\mathfrak{so}}
\newcommand{\Kmin}{\Gamma}%{K_{min}}
\newcommand{\Kmax}{\hat\Gamma}
\newcommand{\wt}{\widetilde}
\newcommand{\E}{\mathtt{E}}
\newcommand{\diag}{\mathrm{diag}}
\newcommand{\Sing}{\mathtt{Sing}}
   \def\XXint#1#2#3{{\setbox0=\hbox{$#1{#2#3}{\int}$}
        \vcenter{\hbox{$#2#3$}}\kern-.5\wd0}}
\theoremstyle{plain}
\newtheorem{proposition}{Proposition}[section]
\newtheorem{theorem}[proposition]{Theorem}
\newtheorem{lemma}[proposition]{Lemma}
\newtheorem{corollary}[proposition]{Corollary}
\newtheorem{thm}{Theorem}[section]
\theoremstyle{definition}
\newtheorem{definition}[proposition]{Definition}
\newtheorem{remark}[proposition]{Remark}
\theoremstyle{remark}
\title[CE and GEO dimension]{Curvature exponent 
and geodesic dimension
on Sard-regular Carnot groups}
\author[Nicolussi~Golo]{Sebastiano Nicolussi Golo}
\address[Nicolussi~Golo]{Department of Mathematics and Statistics, 40014 University of Jyväskylä, Finland}	
\email{sebastiano@nicolussigolo.eu}
\author[Zhang]{Ye Zhang}
\address[Zhang]{Analysis on Metric Spaces Unit, Okinawa Institute of Science and Technology Graduate University, Okinawa 904-0495, Japan}
\email{Ye.Zhang2@oist.jp}
\date{\today. \IfFileExists{./.gittex}{\input{./.gittex}}{}}
\keywords{Carnot groups, Curvature exponent, Geodesic dimension, Sub-Riemannian geometry}
\subjclass[2020]{53C17, 53C23}
\begin{document}
\maketitle

\begin{abstract}
In this paper we characterize the geodesic dimension $N_{GEO}$ 
and give a new lower bound to the curvature exponent $N_{CE}$ on Sard-regular Carnot groups. 
As an application, we give an example of step-two Carnot group on which $N_{CE} > N_{GEO}$:
this answers a question posed by Rizzi in~\cite{MR3502622}.
\end{abstract}

\setcounter{tocdepth}{2}
\phantomsection
\addcontentsline{toc}{section}{Contents}
\tableofcontents

%%%%%%%%%%%%%%%%%%%%%%%%%%%%%%%%%%%%%%%%%%%%%%%%%%%%%%%%%%%%%%%
%%%%%%%%%%%%%%%%%%%%%%%%%%%%%%%%%%%%%%%%%%%%%%%%%%%%%%%%%%%%%%%

\section{Introduction}
%%%%%%%%%%%%%%%%%%%%%%%%%%%%%%%%%%%%%%%%%%%%%%%%%%%%%%%%%%%%%%%
\subsection{Geodesic Dimension and Curvature Exponent}
In a geodesic metric measure space $(G,d,\vol)$, 
for a point $p\in G$, a set $E\subset G$, and $t\in[0,1]$,
define the \emph{set of $t$-intermediate points}
\begin{equation}\label{eq64c38a13}
Z_t(p,E) := \left\{ z\in G: \exists q\in E: d(p,z) = t d(p,q), d(z,q) = (1-t)d(p,q) \right\} .
\end{equation}
Although our definition of set of intermediate points is not the same as in \cite{MR3852258,MR3502622},
we will clarify in Remark~\ref{rem64c3884d} that they are interchangeable in our study of geodesic dimension and curvature exponent.

We are interested in the behaviour of $t\mapsto \vol(Z_t(p,E))$ for $E$ measurable with $0<\vol(E)<\infty$.
In particular, we have two characteristic exponents.
First, the \emph{geodesic dimension at $p$} is 
\[
N_{GEO}(p):=
\inf\left\{ N>0 :
\sup_{E\in\scr F} \limsup_{t\to0} \frac{\vol(Z_t(p,E))}{t^N\vol(E)} = \infty 
\right\}
\]
where $\scr F := \{E\subset G\text{ bounded, measurable with }0<\vol(E)<\infty\}$.

Second, the \emph{curvature exponent at $p$} is
\[
N_{CE}(p)
:= \inf\left\{N>0 :  \vol(Z_t(p,E)) \ge t^N \vol(E), \forall t\in[0,1],\forall E\in\scr F \right\} .
\]
The geodesic dimension was originally introduced in~\cite{MR3852258}.
The curvature exponent was originally introduced in~\cite{MR3110060}.
See also \cite{MR3502622}. 
Note that we have $N_{CE}(p) \ge N_{GEO}(p)$ by definition.

If $G$ is Lie group, $d$ is left-invariant and $\vol$ is a Haar measure, 
the choice of $p$ does not play any role and thus we can focus on $p=e$, the identity element of $G$.
Consequently, we have the \emph{geodesic dimension} $N_{GEO}$ of $G$ and
the \emph{curvature exponent} $N_{CE}$ of $G$.

%%%%%%%%%%%%%%%%%%%%%%%%%%%%%%%%%%%%%%%%%%%%%%%%%%%%%%%%%%%%%%%
\subsection{Sard-regular Carnot group}
We will give estimates for the geodesic dimension and the curvature exponent of Carnot groups.
Let $G$ be a Carnot group with stratified Lie algebra $\frk g = \bigoplus_{j=1}^s V_j$
and a fixed scalar product $\langle \cdot,\cdot \rangle$ on $V_1$.
See Section~\ref{sec6492eb72} for details.

The sub-Riemannian exponential map (based at the identity element $e$)
$\sre$ is an analytic function from $T_e^*G = \frk g^*$ to $G$, where $\frk g^*$ is the dual of the Lie algebra~$\frk g$.
We denote by $\Jac(\sre)$ the Jacobian determinant of $\sre$.
For more details, see Sections~\ref{sec6492cb92} and \ref{sec6492eb72}.

\begin{definition}\label{def6401a951}
	Define $\scr D\subset\frk g^*$ as the open set of all $\xi\in\frk g^*$ such that
	$\Jac(\sre)(t\xi)\neq0$ for all $t\in(0,1]$ and $t\mapsto\sre(t\xi)$ is the unique constant-speed length-minimizing curve $[0,1]\to G$ from $e$ to $\sre(\xi)$.
	
	Next, define $\cal S_e\subset G$ as the image $\sre(\scr D)$.
	Notice that $\sre$ is a diffeomorphism from $\scr D$ to $\cal S_e$.
	We denote by $\rho:\cal S_e\to\scr D$ the inverse of $\sre$ on $\cal S_e$.
\end{definition}

It is well known that $\cal S_e$ is dense in $G$, 
see~\cite{MR2513150} and references therein.
However,
it is not known whether $\cal S_e$ has always full measure in $G$,
see for instance~\cite{MR3569245}.
In our study, we need $\cal S_e$ to have full measure.

\begin{definition}[Sard-regular]\label{def6401a973}
	We say that a Carnot group $G$ is \emph{Sard-regular}
	if the set $\cal S_e$ has full measure in $G$. 
\end{definition}

Note that Carnot groups of step two are Sard-regular by~\cite[Proposition~15]{MR3110060}.
We stress that 
it is an open question whether all Carnot groups are Sard-regular,
that is, this hypothesis might be superfluous.

%%%%%%%%%%%%%%%%%%%%%%%%%%%%%%%%%%%%%%%%%%%%%%%%%%%%%%%%%%%%%%%
\subsection{Sub-Riemannian exponential map}

We will use the fact that the sub-Riemannian exponential map is analytic to estimate both $N_{GEO}$ and $N_{CE}$.

The stratification $\frk g= \bigoplus_{j=1}^sV_j$ of the Lie algebra of $G$
induces a splitting $\frk g^* = \bigoplus_{j=1}^sV_j^*$
of the dual space $\frk g^*$,
where $V_j^* = \{\alpha\in\frk g^*:V_i\subset\ker\alpha,\ \forall i\neq j\}$.

For $\lambda\in\R$, define $\zeta_\lambda:\frk g^*\to\frk g^*$ as
\[
\zeta_\lambda\left( \sum_{j=1}^s \xi_j \right) = \sum_{j=1}^s \lambda^{j-1} \xi_j .
\]
Notice that, when $\lambda\to0$, we have $\zeta_\lambda(\xi)\to\xi_1\in V_1^*$.
By insight to the Hamiltonian system, we obtain in Section~\ref{sec6425df39} that the Jacobian determinant of the sub-Riemannian exponential map satisfies
\begin{equation}\label{eq64009980}
	\Jac(\sre)(\lambda\xi) = \lambda^{2Q-2n} \Jac(\sre)(\zeta_\lambda(\xi)) .
\end{equation}
Here $n =\dim(G)$ and $Q =\sum_{j=1}^sj\dim(V_j)$ are the topological and homogeneous dimensions of $G$, respectively.
In Proposition~\ref{prop64255e1c}, we prove that, if $G$ is Sard-regular, then,
for every measurable $E\subset G$,
\begin{equation}\label{eq6401ad3d}
	\vol(Z_\lambda(e,E)) = \lambda^{2Q-n}  \int_{\rho(E \cap \cal S_e)} | \Jac(\sre)(\zeta_\lambda(\xi)) | \dd\xi .
\end{equation}
Formula~\eqref{eq6401ad3d} is crucial for our estimates of $N_{GEO}$ and $N_{CE}$.

By analyticity of the sub-Riemannian exponential map,
there are analytic functions $P_k:\frk g^*\to\R$
such that for every $\xi\in\frk g^*$ there exists $\lambda_\xi>0$ with
\begin{equation}\label{eq64255fe9}
\Jac(\sre)(\zeta_\lambda(\xi))
= \sum_{k=0}^\infty P_k(\xi) \lambda^k ,
\end{equation}
for $|\lambda|<\lambda_\xi$.
One can take $\xi\mapsto\lambda_\xi$ continuous.
Define
\begin{equation}\label{eq6400a110}
\begin{aligned}
	\Kmin(\xi) &:= \min\{k:P_k(\xi)\neq0\} ,  \\
	\Kmin(G) &:= \min\{\Kmin(\xi) : \xi\in\frk g^*\} , \text{ and} \\
	\Kmax(G) &:= \sup\{\Kmin(\xi): \xi\in\frk g^* \text{ with }\Kmin(\xi)<\infty\} .
\end{aligned}
\end{equation}

%%%%%%%%%%%%%%%%%%%%%%%%%%%%%%%%%%%%%%%%%%%%%%%%%%%%%%%%%%%%%%%
\subsection{Main Results}
Our main results are the following two theorems~\ref{thm6401aa05} and~\ref{thm6419896b},
which we then summarize in Theorem~\ref{thm6401b142}.

\begin{thm}\label{thm6401aa05}
	If $G$ is a Sard-regular Carnot group, then
	\[
	N_{GEO} = 2Q-n+\Kmin(G) ,
	\]
	where $n=\dim(G)$ is the topological dimension,
	$Q=\sum_{j=1}^sj\dim(V_j)$ is the homogeneous dimension,
	and $\Kmin(G)$ is defined in~\eqref{eq6400a110}.
\end{thm}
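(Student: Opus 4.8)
The plan is to read the small-$t$ asymptotics of $t\mapsto\vol(Z_t(e,E))$ directly off the expansion~\eqref{eq64255fe9} and the volume formula~\eqref{eq6401ad3d}, and then to match them against the definition of $N_{GEO}$. Write $M:=2Q-n+\Kmin(G)$. The first step is a reduction: by the very definition of $\Kmin(G)$ as $\min_\xi\Kmin(\xi)$, we have $P_k\equiv0$ on $\frk g^*$ for every $k<\Kmin(G)$, so~\eqref{eq64255fe9} factors as $\Jac(\sre)(\zeta_\lambda(\xi))=\lambda^{\Kmin(G)}\wt J(\lambda,\xi)$, where $\wt J(\lambda,\xi):=\sum_{k\ge0}P_{k+\Kmin(G)}(\xi)\lambda^k$ is analytic with $\wt J(0,\xi)=P_{\Kmin(G)}(\xi)$. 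Substituting into~\eqref{eq6401ad3d} and setting $F:=\rho(E\cap\cal S_e)$ gives, for $E\in\scr F$ and $\lambda\in(0,1]$,
\[
\vol(Z_\lambda(e,E))=\lambda^{M}\int_F|\wt J(\lambda,\xi)|\dd\xi,
\qquad
\vol(E)=\int_F|\Jac(\sre)(\xi)|\dd\xi ,
\]
the second identity by Sard-regularity and the area formula. Thus everything reduces to understanding $\lambda^{M-N}\int_F|\wt J(\lambda,\xi)|\dd\xi$.

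For the upper bound $N_{GEO}\le M$ I would, given any $N>M$, produce a single witness $E$. Since $P_{\Kmin(G)}\not\equiv0$, its non-vanishing set is open and dense; as $\cal S_e$ has full measure, $\scr D$ has positive measure, so I may choose a small closed ball $F_0\subset\scr D$ on which $P_{\Kmin(G)}$ does not vanish and set $E_0:=\sre(F_0)\in\scr F$. On the compact $F_0$ the series converges uniformly, so $\int_{F_0}|\wt J(\lambda,\xi)|\dd\xi\to\int_{F_0}|P_{\Kmin(G)}(\xi)|\dd\xi>0$ as $\lambda\to0$. Since $\lambda^{M-N}\to\infty$,
\[
\limsup_{\lambda\to0}\frac{\vol(Z_\lambda(e,E_0))}{\lambda^N\vol(E_0)}
=\limsup_{\lambda\to0}\lambda^{M-N}\,\frac{\int_{F_0}|\wt J(\lambda,\xi)|\dd\xi}{\vol(E_0)}=\infty ,
\]
so $N$ lies in the defining set and $N_{GEO}\le N$; letting $N\downarrow M$ gives $N_{GEO}\le M$.

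For the lower bound $N_{GEO}\ge M$ I must show that for every $N<M$ the quantity $\sup_{E}\limsup_{\lambda\to0}\lambda^{-N}\vol(Z_\lambda(e,E))/\vol(E)$ is finite; since this is a supremum of per-$E$ limits, it suffices to prove that for each fixed $E$ the limit is in fact $0$. Writing $c_\xi(s):=s^{-(2Q-2n+\Kmin(\xi))}\Jac(\sre)(s\xi)$, relation~\eqref{eq64009980} gives $\wt J(\lambda,\xi)=\lambda^{\Kmin(\xi)-\Kmin(G)}c_\xi(\lambda)$ and $\Jac(\sre)(\xi)=c_\xi(1)$. Because $\Kmin(\xi)\ge\Kmin(G)$, one has $|\wt J(\lambda,\xi)|\le\sup_{s\in(0,1]}|c_\xi(s)|=:H(\xi)$ for all $\lambda\in(0,1]$, while pointwise $\wt J(\lambda,\xi)\to P_{\Kmin(G)}(\xi)$ (the limit being $0$ wherever $\Kmin(\xi)>\Kmin(G)$). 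If $H$ is integrable on $F$, the reverse Fatou lemma yields $\limsup_{\lambda\to0}\int_F|\wt J(\lambda,\xi)|\dd\xi\le\int_F|P_{\Kmin(G)}(\xi)|\dd\xi<\infty$, and multiplying by $\lambda^{M-N}\to0$ forces the per-$E$ limit to be $0$, as needed.

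The main obstacle is precisely the integrability of $H$ on $F$. One cannot take $F$ compact: if $E\subset B(e,R)$ then $\xi\in F$ only forces the first-layer component $\xi_1$ to be bounded (it fixes the speed, hence the length, of the minimizer), while the higher components $\xi_2,\dots,\xi_s$ range without bound, so $F$ is typically unbounded and the real issue is decay at infinity in $\frk g^*$. What is available is $\int_F|c_\xi(1)|\dd\xi=\vol(E)<\infty$, so the task is to dominate $\sup_{s\in(0,1]}|c_\xi(s)|$ by a fixed multiple of the endpoint value $|c_\xi(1)|$, uniformly in $\xi\in\scr D$. Here the defining property of $\scr D$ is the lever: $\Jac(\sre)(s\xi)\neq0$ for all $s\in(0,1]$, so $s\mapsto c_\xi(s)$ is continuous, of constant sign, and non-vanishing on $[0,1]$; converting this absence of conjugate points along the minimizer into a quantitative comparison between $\max_{[0,1]}|c_\xi|$ and $|c_\xi(1)|$, through the Jacobi equation governing the geodesic Jacobian, is the delicate point on which the lower bound rests.
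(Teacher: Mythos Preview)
Your upper bound $N_{GEO}\le M$ is essentially the paper's construction: pick a compact $F_0\subset\scr D$ on which $P_{\Kmin(G)}$ does not vanish, set $E_0=\sre(F_0)$, and read off the leading power from~\eqref{eq6401ad3d} and~\eqref{eq64255fe9}. This half is fine.

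The lower bound, however, has a genuine gap that you yourself flag. Your argument would need $H(\xi)=\sup_{s\in(0,1]}|c_\xi(s)|$ to be dominated by a fixed multiple of $|c_\xi(1)|=|\Jac(\sre)(\xi)|$ on $F=\rho(E\cap\cal S_e)$, and you do not prove this. In fact the uniform comparison you propose is suspect: for $\xi\in\scr D$ approaching a first conjugate time (i.e.\ the boundary of $\scr D$), one expects $c_\xi(1)\to0$ while $c_\xi(s)$ for small $s$ stays bounded away from zero, so the ratio $\max_{[0,1]}|c_\xi|/|c_\xi(1)|$ can blow up. Since for a bounded $E$ the set $F$ is typically unbounded and may accumulate on $\partial\scr D$, there is no reason to hope for the domination you need, and I do not see how the Jacobi-equation heuristic you mention would close this.

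The paper sidesteps the whole difficulty. It quotes the Agrachev--Barilari--Rizzi result (Proposition~\ref{prop64c384b7}, i.e.\ \cite[Theorem~D]{MR3852258}): for \emph{every} bounded measurable $E\subset\cal S_e$ with $0<\vol(E)<\infty$ one has $\vol(Z_\epsilon(e,E))\sim\epsilon^{N_{GEO}}$ as $\epsilon\to0$. Given this, a single well-chosen $E$ suffices for \emph{both} inequalities: one only needs to exhibit $E$ with $\lim_{\epsilon\to0}\epsilon^{-M}\vol(Z_\epsilon(e,E))\in(0,\infty)$, which is exactly your upper-bound computation, and then ABR forces $N_{GEO}=M$. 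So rather than trying to control the Jacobian uniformly over unbounded regions of $\scr D$, the paper outsources the hard direction to the general theory of ample geodesics.
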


See Section~\ref{sec6425dfb4} for the proof of Theorem~\ref{thm6401aa05}.

\begin{thm}\label{thm6419896b}
	In a Sard-regular Carnot group, we have
	\[
	2Q-n+\Kmax(G) \le N_{CE}
	\]
	where $n=\dim(G)$ is the topological dimension,
	$Q=\sum_{j=1}^sj\dim(V_j)$ is the homogeneous dimension,
	while $\Kmax(G)$ is defined in~\eqref{eq6400a110}.
\end{thm}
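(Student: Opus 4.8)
The plan is to prove the stated lower bound by producing, for the defining family of $N_{CE}$, explicit violations below the target exponent. Write $S:=\{N>0:\vol(Z_\lambda(e,E))\ge\lambda^N\vol(E)\ \forall\lambda\in[0,1],\ \forall E\in\scr F\}$, so that $N_{CE}=\inf S$. Since $\lambda^{N'}\le\lambda^{N}$ for $\lambda\in[0,1]$ whenever $N'\ge N$, the set $S$ is an up-set; hence it suffices to show that $S$ contains no $N$ with $N<2Q-n+\Kmax(G)$. Concretely, I would fix such an $N$, put $M:=N-(2Q-n)$ (so $M<\Kmax(G)$), and construct $E\in\scr F$ and $\lambda\in(0,1]$ with $\vol(Z_\lambda(e,E))<\lambda^N\vol(E)$. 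By the key formula~\eqref{eq6401ad3d} this is equivalent to $\int_{\rho(E\cap\cal S_e)}|\Jac(\sre)(\zeta_\lambda(\xi))|\dd\xi<\lambda^M\vol(E)$, which is the inequality I will arrange.

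First I would locate a good covector inside $\scr D$. By definition of $\Kmax(G)$ as a supremum and $M<\Kmax(G)$, choose $\xi_*\in\frk g^*$ with $M<\Kmin(\xi_*)=:k<\infty$. The crucial observation is that $\Kmin$ is invariant under the Euclidean dilations $\xi\mapsto s\xi$: expanding~\eqref{eq64009980} using $\zeta_\lambda(s\xi)=s\,\zeta_\lambda(\xi)$ and $\zeta_s\circ\zeta_\lambda=\zeta_{s\lambda}$ gives $P_j(s\xi)=s^{2Q-2n+j}P_j(\xi)$ for all $j$ and all $s\neq0$, so that $\Kmin(s\xi_*)=k$ for every $s>0$. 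On the other hand, sufficiently short normal geodesics are the unique length minimizers between their endpoints and carry no conjugate points, and the hypothesis $\Kmin(\xi_*)<\infty$ forces the geodesic generated by $\xi_*$ to be non-degenerate near $e$; hence $s\xi_*\in\scr D$ for all sufficiently small $s>0$. Fixing such an $s$ and setting $\xi_0:=s\xi_*$, I obtain a point $\xi_0\in\scr D$ with $\Kmin(\xi_0)=k>M$ and, in particular, $\Jac(\sre)(\xi_0)\neq0$.

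Next comes the estimate. Let $B_r\subset\frk g^*$ be the ball of radius $r$ about $\xi_0$; as $\scr D$ is open, $B_r\subset\scr D$ for small $r$, so $E:=\sre(B_r)\subset\cal S_e$ lies in $\scr F$ and $\rho(E)=B_r$. Setting $F(\lambda,r):=\int_{B_r}|\Jac(\sre)(\zeta_\lambda(\xi))|\dd\xi$, the change of variables for the diffeomorphism $\sre\colon\scr D\to\cal S_e$ gives $\vol(E)=F(1,r)$, while~\eqref{eq6401ad3d} gives $\vol(Z_\lambda(e,E))=\lambda^{2Q-n}F(\lambda,r)$; the goal is $F(\lambda,r)<\lambda^M F(1,r)$. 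In the expansion~\eqref{eq64255fe9} the coefficients $P_0,\dots,P_{k-1}$ vanish at $\xi_0$ because $\Kmin(\xi_0)=k$, so $|P_j(\xi)|\le L_j r$ on $B_r$ for $j<k$ by analyticity, whereas $P_k$ is nonzero at $\xi_0$. Using that $\xi\mapsto\lambda_\xi$ may be taken continuous (hence bounded below on $\overline{B_r}$) to control the series tail uniformly, I would obtain, for $\lambda$ small and $\xi\in B_r$, a bound $|\Jac(\sre)(\zeta_\lambda(\xi))|\le C_1\lambda^k+C_2 r$, whence $F(\lambda,r)\le|B_r|\,(C_1\lambda^k+C_2 r)$. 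Continuity of $\Jac(\sre)$ with $\Jac(\sre)(\xi_0)\neq0$ gives $|\Jac(\sre)|\ge c_0>0$ on $B_r$ for small $r$, so $F(1,r)\ge c_0|B_r|$ and therefore $F(\lambda,r)/F(1,r)\le(C_1\lambda^k+C_2 r)/c_0$. Since $k>M$, I first pick $\lambda_*\in(0,1]$ with $C_1\lambda_*^{k}<\tfrac12 c_0\lambda_*^M$ (possible because $\lambda^{k-M}\to0$), then $r$ with $C_2 r<\tfrac12 c_0\lambda_*^M$; for these $F(\lambda_*,r)/F(1,r)<\lambda_*^M$, i.e.\ $\vol(Z_{\lambda_*}(e,E))<\lambda_*^N\vol(E)$, which is the required violation.

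The hard part is the interaction of the two scales $\lambda$ and $r$. The lower-order coefficients $P_0,\dots,P_{k-1}$ vanish only at the single point $\xi_0$, so on a fixed ball $B_r$ they contribute a term of size $\sim C_2 r$ that, as $\lambda\to0$, eventually swamps the genuine leading term $\sim C_1\lambda^k$: sending $\lambda\to0$ with $E$ fixed would detect the minimum of $\Kmin$ over $B_r$ rather than its value $k$ at the centre. The resolution is precisely the order of quantifiers used above — freeze $\lambda=\lambda_*$ first and only then shrink $r$ — which the definition of $N_{CE}$ allows, since a single violating pair $(\lambda_*,E)$ already excludes $N$ from $S$. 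The other point requiring care, namely that a near-maximiser of $\Kmin$ can be taken inside $\scr D$, is exactly the content of the dilation-invariance of $\Kmin$ combined with the short-geodesic description of $\scr D$; Sard-regularity enters through formula~\eqref{eq6401ad3d} and through the identification of $\sre$ as a diffeomorphism $\scr D\to\cal S_e$, which makes the change of variables for $E=\sre(B_r)$ legitimate.
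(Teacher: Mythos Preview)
Your argument is correct. Both you and the paper first reduce to a covector $\xi_0\in\scr D$ with $\Kmin(\xi_0)=k>M$ (your dilation-invariance of $\Kmin$ and the short-geodesic argument are exactly the content of the paper's Lemma~\ref{lem6426f63f} and Lemma~\ref{supD}). From there, however, the routes diverge.

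The paper interposes an equivalence (Proposition~\ref{prop6401afb9}): $N\ge N_{CE}$ holds iff the function $f_\xi(\lambda)=\Jac(\sre)(\zeta_\lambda(\xi))/\lambda^{N-2Q+n}$ has $(f_\xi^2)'\le0$ on $(0,1]$ for every $\xi\in\scr D$. One then simply reads off from the expansion~\eqref{eq64255fe9} that at the single point $\xi_0$, $f_{\xi_0}$ and $f'_{\xi_0}$ share the sign of $P_k(\xi_0)$ near $0$, so $(f_{\xi_0}^2)'>0$ there, contradicting the criterion. This is a purely pointwise computation: no set $E$, no ball, no two-scale game with $\lambda$ and $r$; the nonvanishing of the lower $P_j$ in a neighbourhood of $\xi_0$ never enters.

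Your approach instead builds a genuine counterexample set $E=\sre(B_r)$ and estimates the integral in~\eqref{eq6401ad3d} directly. The price is the interaction of scales you identified --- the $C_2 r$ term from $P_0,\dots,P_{k-1}$ --- which you resolve by freezing $\lambda_*$ first and then shrinking $r$. This is more hands-on but entirely self-contained: you never need the derivative characterisation of Proposition~\ref{prop6401afb9}. Either method proves the theorem; the paper's is shorter once the equivalence is in hand, while yours is more elementary in that it appeals to nothing beyond~\eqref{eq6401ad3d} and the Taylor expansion.
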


See Section~\ref{sec6425e252} for a proof of Theorem~\ref{thm6419896b}.

We summarize the results of Theorems~\ref{thm6401aa05} and~\ref{thm6419896b}
in the following statement:
\begin{thm}\label{thm6401b142}
	In a Sard-regular Carnot group, we have
	\[
	n \le Q \le  N_{GEO} = 2Q-n+\Kmin(G) \le 2Q-n+\Kmax(G) \le N_{CE} ,
	\]
	where $n=\dim(G)$ is the topological dimension,
	$Q=\sum_{j=1}^sj\dim(V_j)$ is the homogeneous dimension,
	while $\Kmin(G)$ and $\Kmax(G)$ are defined in~\eqref{eq6400a110}.
\end{thm}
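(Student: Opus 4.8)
The plan is to obtain the displayed chain of five relations by concatenating the two main theorems already established with a handful of elementary facts about the homogeneity data, so that Theorem~\ref{thm6401b142} is essentially a bookkeeping corollary. The equality $N_{GEO}=2Q-n+\Kmin(G)$ is precisely the content of Theorem~\ref{thm6401aa05}, and the last inequality $2Q-n+\Kmax(G)\le N_{CE}$ is precisely Theorem~\ref{thm6419896b}; these two links therefore require no further argument. What remains is to justify the three links $n\le Q$, $Q\le 2Q-n+\Kmin(G)$, and $2Q-n+\Kmin(G)\le 2Q-n+\Kmax(G)$.

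First I would record the two elementary inequalities on the left. For $n\le Q$, write $n=\sum_{j=1}^s\dim(V_j)$ and $Q=\sum_{j=1}^s j\dim(V_j)$: since $j\ge1$, each summand of $Q$ dominates the corresponding summand of $n$, whence $Q\ge n$ term by term (with equality exactly in the abelian case $s=1$). For $Q\le 2Q-n+\Kmin(G)$, I would rewrite the inequality as $\Kmin(G)\ge n-Q$. By the definition in~\eqref{eq6400a110}, each $\Kmin(\xi)$ is a minimum of indices $k\ge0$ with $P_k(\xi)\neq0$ and is therefore nonnegative, so $\Kmin(G)\ge0$; combined with $n-Q\le0$ from the previous step, the inequality follows.

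The only link carrying genuine content is $\Kmin(G)\le\Kmax(G)$, which yields the middle inequality after adding $2Q-n$ to both sides. Here I would invoke Sard-regularity. Since $\cal S_e=\sre(\scr D)$ has full measure and $\sre$ is a diffeomorphism from $\scr D$ onto $\cal S_e$ with $\Jac(\sre)\neq0$ on $\scr D$, the Jacobian $\Jac(\sre)$ cannot vanish identically; consequently there is at least one $\xi\in\frk g^*$ for which some coefficient $P_k(\xi)\neq0$, that is, with $\Kmin(\xi)<\infty$. Two consequences follow: the minimum $\Kmin(G)=\min\{\Kmin(\xi):\xi\in\frk g^*\}$ is attained at such a $\xi$ and is therefore finite (rays with $\Kmin(\xi)=\infty$ do not lower the minimum), and the supremum defining $\Kmax(G)$ is taken over a nonempty set. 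Since $\Kmin(G)$ itself belongs to that set of finite values, it is bounded above by their supremum, giving $\Kmin(G)\le\Kmax(G)$.

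The main (and essentially only) obstacle is this last point: one must ensure both that $\Kmin(G)$ is finite and that the index set over which $\Kmax(G)$ is a supremum is nonempty, so that the comparison $\Kmin(G)\le\Kmax(G)$ is meaningful. This is exactly where the Sard-regularity hypothesis enters, through the nonvanishing of $\Jac(\sre)$ forced by the full measure of $\cal S_e$; with this in hand, the theorem is a formal assembly of the pieces above.
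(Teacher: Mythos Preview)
Your proposal is correct and matches the paper's treatment: the paper presents Theorem~\ref{thm6401b142} explicitly as a summary of Theorems~\ref{thm6401aa05} and~\ref{thm6419896b} and gives no separate proof, simply noting afterward that $Q\le N_{GEO}\le N_{CE}$ was already known from \cite{MR3502622} and \cite[Proposition~5.49]{MR3852258}. Your write-up supplies the elementary links ($n\le Q$, $\Kmin(G)\ge0$, $\Kmin(G)\le\Kmax(G)$) that the paper leaves implicit; the only cosmetic difference is that you derive $Q\le N_{GEO}$ from $\Kmin(G)\ge0$ rather than citing the literature, and your invocation of Sard-regularity for $\Kmin(G)<\infty$ is slightly stronger than needed, since the mere nonemptiness of $\scr D$ (equivalently, density of $\cal S_e$) already guarantees the existence of $\xi$ with $\Jac(\sre)(\xi)\neq0$ and hence $\Kmin(\xi)<\infty$.
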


It is known that $N_{CE}$ is finite on ideal Carnot groups by Rifford in~\cite{MR3110060} and by Barilari--Rizzi in~\cite{MR3935035}. Then it was generalized to the class of so-called Lipschitz Carnot groups, 
which includes step-two Carnot groups, see~\cite{BR20}. 
The fact that $Q \le N_{GEO}\le N_{CE}$ was already known, see~\cite{MR3502622} or \cite[Proposition~5.49]{MR3852258}.
To our best knowledge,
all known examples of sub-Riemannian Carnot groups satisfy $N_{GEO}= N_{CE}$.
In particular, Juillet showed in \cite{J09} that $N_{CE} = N_{GEO} = 2Q - n$ on the Heisenberg group $\bb H^n$.
Later on, the equality $N_{GEO}= N_{CE}$ has been proven for all corank $1$ Carnot groups in \cite{MR3502622}
and for generalized H-type groups in \cite{BR18}.

In Carnot groups of step two, we will give a constructive method to compute both 
$\Kmin(G)$ and $\Kmax(G)$.
As a consequence, we will provide examples of Carnot groups of step two where $\Kmin(G)<\Kmax(G)$.
In such cases we have $N_{GEO} < N_{CE}$, which answers a question posed by Rizzi in~\cite{MR3502622}.

\begin{corollary}\label{mainc}
	There are sub-Riemannian Carnot groups of step 2 where $N_{GEO}<N_{CE}$.
\end{corollary}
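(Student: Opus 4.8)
The plan is to reduce the statement to an arithmetic inequality between the two invariants $\Kmin(G)$ and $\Kmax(G)$ and then to produce a step-two group realizing it. Every step-two Carnot group is Sard-regular, so Theorem~\ref{thm6401b142} applies and gives
\[
N_{GEO} = 2Q-n+\Kmin(G) \le 2Q-n+\Kmax(G) \le N_{CE} .
\]
Hence $N_{GEO}<N_{CE}$ holds as soon as $\Kmin(G)<\Kmax(G)$, that is, as soon as the map $\xi\mapsto\Kmin(\xi)$ takes at least two distinct finite values. The entire problem therefore becomes: construct a step-two Carnot group in which the order of vanishing in the expansion~\eqref{eq64255fe9} genuinely depends on the covector $\xi$.

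To do this I would first make the coefficients $P_k(\xi)$ explicit in the step-two setting. Encoding the bracket by the skew-symmetric endomorphisms $J_\eta$ of $V_1$ defined by $\langle J_\eta X,Y\rangle = \eta([X,Y])$ for $\eta\in V_2^*$, the step-two Hamiltonian flow is elementary, and $\Jac(\sre)$ can be written in closed form as a function of the eigenvalues $\pm i\theta_1,\dots,\pm i\theta_m$ of $J_{\xi_2}$ and of the components of the $V_1$-momentum (the dual of $\xi_1$) along the associated eigenspaces. Since in step two $\zeta_\lambda(\xi_1+\xi_2)=\xi_1+\lambda\xi_2$, applying $\zeta_\lambda$ merely rescales the eigenvalues to $\lambda\theta_j$; expanding in $\lambda$ then yields the $P_k(\xi)$ in an explicit combinatorial form, and $\Kmin(\xi)$ is the first index at which the corresponding combination of eigenvalues and momentum components fails to cancel. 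This is the constructive computation of $\Kmin(G)$ and $\Kmax(G)$ promised above.

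With this formula in hand, the example is built by choosing the $J_\eta$ so that the eigenvalue/momentum pattern degenerates for special covectors. Concretely, I would arrange the brackets so that for a generic $\xi$ the leading coefficient $P_{\Kmin(G)}(\xi)$ is nonzero, while for a degenerate $\xi'$ --- for instance with the $V_1$-momentum concentrated in a single eigenspace, or with one eigenvalue of $J_{\xi_2'}$ forced to vanish --- several of the first coefficients cancel, pushing $\Kmin(\xi')$ strictly above $\Kmin(G)$ while keeping it finite. Two distinct finite values of $\Kmin(\xi)$ then give $\Kmin(G)<\Kmax(G)$ and hence the corollary. A small explicit model (say with $\dim V_2=2$ and $J$-maps of mismatched eigenvalue structure) should suffice, and one only needs to display it and evaluate the first few $P_k$.

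The main obstacle is the nonvanishing verification: one must check that the cancellations among the $P_k(\xi)$ stop exactly at the predicted index, i.e. that $P_{\Kmin(\xi)}(\xi)\neq0$ for both the generic and the degenerate covector. This requires controlling the combinatorics of the eigenvalue expansion closely enough to exclude an accidental further cancellation, and designing the bracket so that the two targeted orders are simultaneously attained, finite, and distinct.
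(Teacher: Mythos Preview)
Your high-level reduction is exactly the paper's: use Theorem~\ref{thm6401b142} to turn the problem into exhibiting a step-two group with $\Kmin(G)<\Kmax(G)$, i.e., a covector $\xi$ with $0<\Kmin(\xi)<\infty$ strictly above the generic value. Where you diverge is in how to compute $\Kmin(\xi)$ and in the choice of example.

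The paper does \emph{not} expand the Taylor coefficients $P_k$ directly. Instead it proves a structural formula (Theorem~\ref{thm63f122b7}): for any $(\xi,\mu)\in V_1\oplus V_2$ one has $\Kmin(\xi,\mu)=N_{\sre}(\xi,\mu)=2\sum_{j} j\dim W_j$, where the $W_j$ come from the filtration $U^\ell=\Span\{\xi,J_\mu\xi,\dots,J_\mu^{\ell-1}\xi\}$ of Definition~\ref{def63f121f2}. The ``nonvanishing verification'' you flag as the main obstacle is handled once and for all by Lemma~\ref{lem63ff0166}, which shows $\det(a(0))>0$ via a Hilbert-matrix positivity argument; after that, computing $\Kmin(\xi,\mu)$ is just linear algebra on the $W_j$. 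This is what your eigenvalue expansion would buy after a lot of case analysis, but packaged so that no coefficient ever needs to be checked by hand.

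For the example, the paper uses the star-graph groups $K_{1,k}$ ($k\ge2$), where $[X_0,X_j]=Y_j$ and all other brackets vanish. There one reads off immediately that $\Kmin(\xi,\mu)=0$ when $\xi_0\neq0$, while $\Kmin(\xi,\mu)=2k-2$ when $\xi_0=0$ and $\mu\cdot\hat\xi\neq0$, giving $\Kmin(K_{1,k})=0<2k-2=\Kmax(K_{1,k})$. Your suggested model (``$\dim V_2=2$ with mismatched eigenvalue structure'') is in the right spirit but vaguer, and a caution: the block-diagonal eigenvalue picture you sketch is exactly the $G_A$ class of Section~\ref{se4}, where the paper shows $\Kmin=\Kmax=0$ and $N_{GEO}=N_{CE}$. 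The mechanism that produces $\Kmin(\xi)>0$ in $K_{1,k}$ is not a mismatch of nonzero eigenvalues but rather that $J_\mu$ has a large kernel and the degenerate $\xi$ sits inside it, so $U^\ell$ grows slowly. Your plan would work, but you would want to aim at that kernel phenomenon rather than at eigenvalue spacing.
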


Borza--Tashiro 
have recently given
examples of sub-Finsler Carnot groups with $N_{GEO} < N_{CE}$, see \cite{BT23}.
Similarly with what we do, they study asymptotic behaviours of the Jacobian of the sub-Riemannian, or sub-Finsler, exponential map.
In their case, since they use $\ell^p$-norms instead of the $\ell^2$-norm we use, 
the value of $\Gamma(\xi)$ may be fractional.

\begin{remark}\label{rel1}
After this paper was completed, Rizzi informed us of the following results in \cite{MR3852258}. In our framework of Sard-regular Carnot groups, it follows from \cite[Lemma 6.27]{MR3852258} that our $2Q +n +\Kmin(\lambda)$ in this paper coincides with $\cal N_\lambda$ there, whose value by \cite[Definition 5.44]{MR3852258} is given by geodesic growth vector $\cal G_\lambda$. We refer to Remark \ref{rel2} below for more details about the value of $\cal N_\lambda$ on step-two Carnot groups. Furthermore, in \cite[Definition 5.47]{MR3852258} the geodesic dimension is actually defined by the minimum of those $\cal N_\lambda$, which is exactly our Theorem \ref{thm6401aa05}. However, our Theorem \ref{thm6419896b} and Corollary \ref{mainc} remain new.
\end{remark}

%%%%%%%%%%%%%%%%%%%%%%%%%%%%%%%%%%%%%%%%%%%%%%%%%%%%%%%%%%%%%%%
\subsection{Summary}
In Section~\ref{sec6492cb92}, we give a brief description of the Hamiltonian formalism that defines the sub-Riemannian exponential map.
We then introduce Carnot groups in Section~\ref{sec6492eb72}.
Section~\ref{sec6425dfb4} contains the proof of Theorem~\ref{thm6401aa05}, 
while Section~\ref{sec6425e252} the proof of Theorem~\ref{thm6419896b}.
In Section~\ref{sec6496dd08}, we study more closely Carnot groups of step two, and the sub-Riemannian exponential map thereof.
Finally, in Section~\ref{sec64c3b071} we compute several explicit examples.

%%%%%%%%%%%%%%%%%%%%%%%%%%%%%%%%%%%%%%%%%%%%%%%%%%%%%%%%%%%%%%%

\section{Hamiltonian systems on Lie groups}\label{sec6492cb92}
In this section, $G$ denotes a Lie group with Lie algebra $\frk g$.
For sake of completeness, we will describe the standard construction of Hamiltonian systems on $G$
given by left-invariant Hamiltonians $H:T^*G\to\R$.
We will then apply this formalism to sub-Riemannian Carnot groups in the next section.

We identify $\frk g$ with the tangent space $T_eG$ of $G$ at the identity element $e\in G$.
For any function $v:U\to\frk g$ on an open subset $U\subset G$, we define the vector field $\tilde v\in\Gamma(TU)$ on $U$ as
\[
\tilde v(p) := DL_p|_e[v] \in T_pG .
\]
Similarly, if $\alpha:U\to\frk g^*$, we define $\tilde\alpha\in\Gamma(T^*U)$ as
\[
\tilde\alpha(p) = DL_{p^{-1}}|_p^*[\alpha] \in T_p^*G .
\]
Notice that the vector field $\tilde v$ is left-invariant if and only if the function $v$ is constant, and similarly $\tilde\alpha$ is left-invariant if and only if $\alpha$ is constant.

We will denote by $\langle \cdot|\cdot \rangle$ the pairing of a vector space with its dual, or, more generally, the pairing between linear maps and their domain.
Scalar products are usually denoted by $\langle \cdot,\cdot \rangle$.

%%%%%%%%%%%%%%%%%%%%%%%%%%%%%%%%%%%%%%%%%%%%%%%%%%%%%%%%%%%%%%%
\subsection{Differential forms}\label{ssec64117def}
For a vector space $V$ and an open set $U\subset G$,
we define
\[
\Omega_L^k(U;V) := C^\infty(U;\Alt^k(\frk g;V))
\]
where $\Alt^k(\frk g;V)$ is the space of $k$-multilinear alternating maps from $\frk g$ to $V$.
Elements in $\Omega_L^k(U;V)$ are identified with differential forms on $U$ as follows.
Define $\MC:\Omega^k(U;V) \to \Omega_L^k(U;V)$ by
\[
\langle \MC(\tilde\alpha)(p) | v_1\wedge\dots\wedge v_k \rangle
= \langle \tilde\alpha(p) | \tilde v_1(p)\wedge\dots\wedge \tilde v_k(p) \rangle ,
\]
for $p\in U$, $\tilde\alpha\in\Omega^k(U;V)$ and $v_j\in \frk g$.
Vice versa, if $\alpha\in\Omega_L^k(U;V)$, we denote by $\tilde\alpha$ the only element of $\Omega^k(U;V)$ such that $\MC(\tilde\alpha) = \alpha$.

We use the map $\MC$ to push the exterior derivative from $\Omega^k(U;V)$ to $\Omega_L^k(U;V)$.
We define $d:\Omega_L^k(U;V)\to\Omega_L^{k+1}(U;V)$ as $d\alpha := \MC(d\tilde\alpha)$.
Using standard formulas for the exterior differential, we obtain 
for $\alpha\in\Omega_L^k(U;V)$ and $v_0,\dots,v_k\in\frk g$,
\begin{equation}\label{eq63d1a655}
\begin{aligned}
\langle d\alpha | v_0\wedge\dots\wedge v_k \rangle
	&= \sum_{j=0}^k (-1)^j \tilde v_j \langle \alpha(\cdot)| v_0\wedge\dots\wedge\hat v_j \wedge\dots\wedge v_k \rangle  \\
	&
+ \sum_{i<j}^k (-1)^{i+j} \langle \alpha | [v_i,v_j]\wedge v_0\wedge\dots\wedge\hat v_i \wedge\dots\wedge\hat v_j \wedge\dots\wedge v_k \rangle .
\end{aligned}
\end{equation}

%%%%%%%%%%%%%%%%%%%%%%%%%%%%%%%%%%%%%%%%%%%%%%%%%%%%%%%%%%%%%%%
\subsection{The cotangent bundle and Hamiltonian mechanics}
The cotangent bundle $T^*G$ of a Lie group $G$ with Lie algebra $\frk g$ has a (left-)canonical group structure as direct product $G\times\frk g^*$, where $\frk g^*$ is seen as abelian Lie group.
More precisely, 
we make $T^*G$ into a Lie group isomorphic to $G\times\frk g^*$
via a map $\Phi_L:G\times\frk g^* \to T^*G$ defined by
\[
\Phi_L(g,\alpha) := \tilde\alpha(g) = DL_{g^{-1}}|_g^*[\alpha] .
\]
This group structure allows us to use the notation from Section~\ref{ssec64117def} for differential forms on $T^*G$.

On $T^*G$, we have the tautological 1-form $\tau\in\Omega^1(T^*G)$,
\[
\tau(\xi)[w] = \langle \xi|D\pi_{T^*G}w \rangle ,
\qquad \text{ for }\xi\in T^*G\text{ and } w\in T_\xi(T^*G) ,
\]
where $\pi_{T^*G}:T^*G\to G$ is the bundle projection.
We pull back $\tau$ to $G\times\frk g^*$ via $\Phi_L$ and we take its left version $\tau_L$.
In other words, we define $\tau_L\in\Omega_L^1(G\times\frk g^*;\R)$ as
\[
\tau_L = \MC(\Phi_L^*\tau).
\]
It might look abstract and complicated, but the point of this reasoning is to get the following formula right, that is, we really want to be sure that we are dealing with the standard tautological form and later with the standard symplectic form.
Indeed, the above formula and the definition of the exterior derivative on $\Omega_L^1(G\times\frk g^*;\R)$ imply 
\[
\omega_L := - \dd\tau_L = - \MC(\Phi_L^*\dd\tau) .
\]

A short computation gives us, for all $(g,\alpha,v,\mu)\in G\times\frk g^*\times\frk g\times\frk g^*$,
\begin{equation}\label{eq63d1ab31}
	\langle \tau_L(g,\alpha) | (v,\mu) \rangle = \langle \alpha|v \rangle .
\end{equation}
Indeed,
\begin{align*}
	\langle \tau_L(g,\alpha) | (v,\mu) \rangle 
	&= \langle \Phi_L^*\tau(g,\alpha) | 
		\left.\frac{\dd}{\dd\epsilon}\right|_{\epsilon=0} 
		(g\exp(\epsilon v),\alpha+\epsilon\mu)
		\rangle \\
	&= \langle \tau(\tilde\alpha(g)) | 
		\left.\frac{\dd}{\dd\epsilon}\right|_{\epsilon=0} 
		\Phi_L(g\exp(\epsilon v),\alpha+\epsilon\mu)
		\rangle \\
	&= \langle \tilde\alpha(g) | 
		D\pi_{T^*G}
		\left.\frac{\dd}{\dd\epsilon}\right|_{\epsilon=0} 
		\Phi_L(g\exp(\epsilon v),\alpha+\epsilon\mu)
		\rangle \\
	&= \langle \tilde\alpha(g) | 
		\left.\frac{\dd}{\dd\epsilon}\right|_{\epsilon=0} 
		\pi_{T^*G}
		\Phi_L(g\exp(\epsilon v),\alpha+\epsilon\mu)
		\rangle \\
	&= \langle \tilde\alpha(g) | 
		\left.\frac{\dd}{\dd\epsilon}\right|_{\epsilon=0} 
		g\exp(\epsilon v)
		\rangle \\
	&= \langle \tilde\alpha(g) | \tilde v(g) \rangle 
	= \langle \alpha|v \rangle .
\end{align*}

Now we can compute the symplectic form $\omega_L = - d\tau_L$ 
as
\begin{equation}
	\langle \omega_L(g,\alpha) | (v_0,\mu_0)\wedge(v_1,\mu_1) \rangle
	= \langle \mu_1|v_0 \rangle - \langle \mu_0|v_1 \rangle
		+ \langle \alpha | [v_0,v_1] \rangle .
\end{equation}
Indeed, using~\eqref{eq63d1a655}, we easily compute
\begin{align*}
	\langle \omega_L(g,\alpha) | (v_0,\mu_0)\wedge(v_1,\mu_1) \rangle
	&= - (v_0,\mu_0)\tilde{} \langle \tau_L(\cdot) |  (v_1,\mu_1)\rangle
		+ (v_1,\mu_1)\tilde{} \langle \tau_L(\cdot) |  (v_0,\mu_0)\rangle \\
	&\qquad	+ \langle \tau_L(g,\alpha) | [(v_0,\mu_0),(v_1,\mu_1)] \rangle \\
	&= - \langle \mu_0|v_1 \rangle + \langle \mu_1|v_0 \rangle
		+ \langle \alpha | [v_0,v_1] \rangle .
\end{align*}

If $H:G\times\frk g^*\to\R$ is a smooth function, we define 
$\cal X_H:G\times\frk g^*\to \frk g\times\frk g^*$ by the formula
\begin{equation}\label{eq63d1b115}
	\langle \omega_L(g,\alpha) | \cal X_H(g,\alpha)\wedge(v_1,\mu_1) \rangle
	= \left.\frac{\dd}{\dd\epsilon}\right|_{\epsilon=0} H(g\exp(\epsilon v_1),\alpha+\epsilon\mu_1) ,
\end{equation}
which is required to hold for all $(g,\alpha,v_1,\mu_1)\in G\times\frk g^*\times\frk g\times\frk g^*$.
To compute $\cal X_H$, we write $\cal X_H = (v_H,\mu_H)$
with $v_H:G\times\frk g^*\to \frk g$ and $\mu_H:G\times\frk g^*\to \frk g^*$.
By linearity, we obtain that~\eqref{eq63d1b115} is equivalent to 
\begin{equation}\label{eq63d1b1ae}
\begin{cases}
\langle \mu_1|v_H \rangle
		&= \left.\frac{\dd}{\dd\epsilon}\right|_{\epsilon=0} H(g,\alpha+\epsilon\mu_1) , \\
	\langle \mu_H|v_1 \rangle - \langle \alpha | [v_H,v_1] \rangle
		&= -\left.\frac{\dd}{\dd\epsilon}\right|_{\epsilon=0} H(g\exp(\epsilon v_1),\alpha) , 
	 \end{cases}
\end{equation}
for all $(g,\alpha,v_1,\mu_1)\in G\times\frk g^*\times\frk g\times\frk g^*$.

A solution to the \emph{Hamiltonian equations} is a curve $t\mapsto (g(t),\alpha(t))$ such that
\begin{equation}\label{eq63d3a6ab}
	\begin{cases}
	DL_{g(t)}^{-1}\dot g(t) &= v_H(g(t),\alpha(t)) , \\
	\dot \alpha(t) &= \mu_H(g(t),\alpha(t)) .
	\end{cases}
\end{equation}

%%%%%%%%%%%%%%%%%%%%%%%%%%%%%%%%%%%%%%%%%%%%%%%%%%%%%%%%%%%%%%%
%%%%%%%%%%%%%%%%%%%%%%%%%%%%%%%%%%%%%%%%%%%%%%%%%%%%%%%%%%%%%%%
\subsection{Sub-Riemannian Hamiltonian system}
Let $V_1\subset\frk g$ be a bracket-generating linear subspace of $\frk g$ and $\langle \cdot,\cdot \rangle$ a scalar product on $V_1$.
The scalar product on $V_1$ induces a scalar product $\langle \cdot,\cdot \rangle^*$ on the dual space $V_1^*$.
We will use the standard notation $\alpha\mapsto\alpha^\sharp$ to denote the canonical isomorphism $V_1^*\to V_1$ induced by the scalar product, 
and its inverse $V_1\to V_1^*$, 
$v\mapsto v^\flat$. 
For example, by definition for every $\alpha,\beta\in V_1^*$ we have
$\alpha^\sharp ,\beta^\sharp \in V_1$ with
\[
\langle \alpha,\beta \rangle^*
= \langle \alpha | \beta^\sharp \rangle 
= \langle \alpha^\sharp ,\beta^\sharp  \rangle.
\]

The Hamiltonian we are interested in is 
\begin{equation}\label{eq6412f2b5}
 H:G\times\frk g^*\to\R,
\qquad  H( g,\alpha ) = \frac12 \langle \alpha|_{V_1},\alpha|_{V_1} \rangle^* .
\end{equation}
We have, for all $(g,\alpha,v,\mu) \in G\times\frk g^*\times\frk g\times\frk g^*$,
\begin{align*}
	&\left.\frac{\dd}{\dd\epsilon}\right|_{\epsilon=0} H(g\exp(\epsilon v),\alpha)
	= 0 ,\text{ and}\\
	&\left.\frac{\dd}{\dd\epsilon}\right|_{\epsilon=0} H(g,\alpha+\epsilon\mu)
	= \langle \alpha|_{V_1},\mu|_{V_1} \rangle^* .
\end{align*}
Thus, equations~\eqref{eq63d1b1ae} defining the Hamiltonian vector field $\cal X_H$ become
\[
\begin{cases}
	 \langle \mu|v_H \rangle
		&= \langle \alpha|_{V_1},\mu|_{V_1} \rangle^* , \\
	\langle \mu_H|v \rangle - \langle \alpha | [v_H,v] \rangle
		&= 0 .
\end{cases}
\]
Solving these equations in $v_H$ and $\mu_H$,
we obtain that 
\begin{equation}\label{eq63d398f6}
	\begin{cases}
	v_H(g,\alpha) &= (\alpha|_{V_1})^\sharp \in V_1 , \\
	\mu_H(g,\alpha) &= \alpha\circ\ad_{(\alpha|_{V_1})^\sharp}  .
	\end{cases}
\end{equation}
Therefore, the Hamiltonian flow is given by curves $(g(t),\alpha(t))$ solving~\eqref{eq63d3a6ab}, that is,
\begin{equation}\label{eq63d39dbe}
	\begin{cases}
	DL_{g(t)}^{-1}\dot g(t) &= (\alpha(t)|_{V_1})^\sharp ,\\
	\dot \alpha(t) &= \alpha(t)\circ\ad_{(\alpha(t)|_{V_1})^\sharp} .
	\end{cases}
\end{equation}

\begin{proposition}\label{prop63d45030}
	Let $(g,\alpha):I\to G\times\frk g^*$ be a solution to~\eqref{eq63d39dbe} with $g(0)=e$, the identity element of $G$.
	Then $\alpha(t)$ is the restriction to the curve $g$ of a right invariant 1-form.
	In other words, for all $t\in I$,
	\[
	\alpha(0) 
	= \alpha(t)\circ \Ad_{g(t)^{-1}} .
	\]
\end{proposition}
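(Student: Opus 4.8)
The plan is to show that the $\frk g^*$-valued curve
\[
\beta(t) := \alpha(t)\circ\Ad_{g(t)^{-1}}
\]
is constant on the interval $I$. Once this is established, evaluating at $t=0$ and using $g(0)=e$ gives $\beta(t)=\beta(0)=\alpha(0)\circ\Ad_e=\alpha(0)$, which is exactly the asserted identity $\alpha(0)=\alpha(t)\circ\Ad_{g(t)^{-1}}$. Since $I$ is connected, it suffices to verify that $\dot\beta(t)=0$ for every $t$, so the whole argument reduces to a single differentiation followed by a cancellation.

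First I would record the differentiation rule for $t\mapsto\Ad_{g(t)^{-1}}$. Write $v(t):=DL_{g(t)}^{-1}\dot g(t)$ for the left-logarithmic derivative of $g$. From the first-order expansion $g(t)^{-1}g(t+s)=\exp(sv(t))+o(s)$ one obtains $\frac{\dd}{\dd t}\Ad_{g(t)}=\Ad_{g(t)}\circ\ad_{v(t)}$. Combining this with the general rule $\frac{\dd}{\dd t}(A(t)^{-1})=-A(t)^{-1}\dot A(t)A(t)^{-1}$ applied to $A(t)=\Ad_{g(t)}$, and cancelling $\Ad_{g(t)^{-1}}\circ\Ad_{g(t)}=\Id$, one is led to
\[
\frac{\dd}{\dd t}\Ad_{g(t)^{-1}} = -\ad_{v(t)}\circ\Ad_{g(t)^{-1}} .
\]
Moreover, the first line of the Hamiltonian system~\eqref{eq63d39dbe} identifies $v(t)=(\alpha(t)|_{V_1})^\sharp$, which is precisely the element that appears in the second line of~\eqref{eq63d39dbe}.

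It then remains to combine these ingredients. Applying the product rule to $\beta=\alpha\circ\Ad_{g^{-1}}$ and substituting the second line of~\eqref{eq63d39dbe}, namely $\dot\alpha(t)=\alpha(t)\circ\ad_{v(t)}$, together with the formula above, gives
\[
\dot\beta(t)=\alpha(t)\circ\ad_{v(t)}\circ\Ad_{g(t)^{-1}}-\alpha(t)\circ\ad_{v(t)}\circ\Ad_{g(t)^{-1}}=0 ,
\]
with $v(t)=(\alpha(t)|_{V_1})^\sharp$; the two contributions cancel identically, so $\dot\beta\equiv0$ and the proposition follows. The only delicate point, and the place where I would be most careful, is obtaining the derivative formula for $\Ad_{g(t)^{-1}}$ with the correct sign and the correct $\ad$-term: this depends on consistently using the left-logarithmic derivative $v(t)=DL_{g(t)}^{-1}\dot g(t)$ (rather than the right one), and the exact cancellation in the last display is entirely dictated by that sign convention.
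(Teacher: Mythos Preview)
Your proof is correct and follows essentially the same strategy as the paper: both arguments define $\beta(t)=\alpha(t)\circ\Ad_{g(t)^{-1}}$ and show $\dot\beta\equiv0$ by combining the two equations of~\eqref{eq63d39dbe} with the derivative of $\Ad_{g(t)^{-1}}$. The only cosmetic difference is that the paper computes this derivative directly via $\left.\frac{\dd}{\dd\epsilon}\right|_{\epsilon=0} g(t+\epsilon)^{-1}g(t)=-(\alpha(t)|_{V_1})^\sharp$, whereas you obtain it from $\frac{\dd}{\dd t}\Ad_{g(t)}=\Ad_{g(t)}\circ\ad_{v(t)}$ together with the inverse-derivative formula.
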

\begin{proof}
	We show that the derivative in $t$ of $ \alpha(t)\Ad_{g(t)^{-1}}$ is zero for all $x\in\frk g$.
	So, we first see that
	\begin{align*}
	\left.\frac{\dd}{\dd\epsilon}\right|_{\epsilon = 0}
		& \alpha(t+\epsilon)\Ad_{g(t+\epsilon)^{-1}}  \\
	&= \left.\frac{\dd}{\dd\epsilon}\right|_{\epsilon = 0}
		 \alpha(t+\epsilon)\Ad_{g(t)^{-1}}
	+ \left.\frac{\dd}{\dd\epsilon}\right|_{\epsilon = 0}
		 \alpha(t)\Ad_{g(t+\epsilon)^{-1}} \\
	&= \alpha(t)\circ\ad_{(\alpha(t)|_{V_1})^\sharp}\Ad_{g(t)^{-1}} 
	+ \left.\frac{\dd}{\dd\epsilon}\right|_{\epsilon = 0}
		 \alpha(t)\Ad_{g(t+\epsilon)^{-1}g(t)}\Ad_{g(t)^{-1}} .
	\end{align*}
	Since 
	\begin{multline*}
	\left.\frac{\dd}{\dd\epsilon}\right|_{\epsilon = 0} g(t+\epsilon)^{-1}g(t) 
	=\left.\frac{\dd}{\dd\epsilon}\right|_{\epsilon = 0} (g(t)^{-1}g(t+\epsilon))^{-1} \\
	= - \left.\frac{\dd}{\dd\epsilon}\right|_{\epsilon = 0} g(t)^{-1}g(t+\epsilon)
	= - DL_{g(t)^{-1}}\dot g(t) 
	= - (\alpha(t)|_{V_1})^\sharp ,
	\end{multline*}
	we obtain
	\begin{multline*}
	\left.\frac{\dd}{\dd\epsilon}\right|_{\epsilon = 0}
		 \alpha(t)\Ad_{g(t+\epsilon)^{-1}g(t)}\Ad_{g(t)^{-1}}
	= \left.\frac{\dd}{\dd\epsilon}\right|_{\epsilon = 0}
		 \alpha(t)\Ad_{\exp(-\epsilon(\alpha(t)|_{V_1})^\sharp)}\Ad_{g(t)^{-1}} \\
	= - \alpha(t)\ad_{(\alpha(t)|_{V_1})^\sharp}\Ad_{g(t)^{-1}} .
	\end{multline*}
	We conclude that $\left.\frac{\dd}{\dd\epsilon}\right|_{\epsilon = 0}
		\alpha(t+\epsilon)\Ad_{g(t+\epsilon)^{-1}}=0$,
	and thus 
	\[
	\alpha(t)\Ad_{g(t)^{-1}} = \alpha(0)\Ad_{g(0)^{-1}} = \alpha(0).
	\]
\end{proof}

Proposition~\ref{prop63d45030} gives a reinterpretation of the ODE~\eqref{eq63d39dbe}.
Indeed, given $\alpha_0\in\frk g^*$, 
we first take the right-invariant 1-form $\alpha(g):=\Ad_{g}^*\alpha_0$, 
then we define the horizontal vector field $v_H(g) = (\alpha(g)|_{V_1})_\flat$,
and finally we integrate the vector field $\tilde v_H$ starting from $e$.
Explicitly, $\tilde v_H$ is
\[
\tilde v_H(g) = D L_g|_{e}((\Ad_{g}^*\alpha_0)|_{V_1})^\sharp .
\]

The ODE~\eqref{eq63d39dbe} has a few useful symmetries that we want to highlight.

\begin{lemma}[Symmetries of the sub-Riemannian Hamiltonian flow: change of speed]\label{lem63ef47ae}
	If $(g,\alpha):I\to G\times\frk g^*$ is a solution to~\eqref{eq63d39dbe},
	then $t\mapsto (g(\lambda t),\lambda\alpha(\lambda t))$ is also a solution to~\eqref{eq63d39dbe}, for every $\lambda>0$.
\end{lemma}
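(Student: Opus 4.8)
The plan is to verify directly that the rescaled curve satisfies the two equations in~\eqref{eq63d39dbe} by a change-of-variables computation. Write $g_\lambda(t) := g(\lambda t)$ and $\alpha_\lambda(t) := \lambda\,\alpha(\lambda t)$, and denote by $(\cdot)'$ differentiation with respect to the new time parameter $t$, reserving the dot for differentiation of the original curve. The main point is bookkeeping with the chain rule: each time we differentiate a quantity evaluated at $\lambda t$ we pick up a factor $\lambda$, and this must be reconciled with the explicit factor $\lambda$ inserted in front of $\alpha$.

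First I would check the second equation, which is purely algebraic in $\alpha$. By the chain rule $\dot\alpha_\lambda(t) = \lambda \cdot \lambda\,\dot\alpha(\lambda t) = \lambda^2\,\dot\alpha(\lambda t)$, and using that $(g,\alpha)$ solves~\eqref{eq63d39dbe} this equals $\lambda^2\,\alpha(\lambda t)\circ\ad_{(\alpha(\lambda t)|_{V_1})^\sharp}$. On the other hand, the right-hand side evaluated at the rescaled curve is $\alpha_\lambda(t)\circ\ad_{(\alpha_\lambda(t)|_{V_1})^\sharp}$; substituting $\alpha_\lambda(t)=\lambda\,\alpha(\lambda t)$ and using that $v\mapsto(\cdot|_{V_1})^\sharp$ and $\alpha\mapsto\ad$ are both linear in $\alpha$, this produces exactly two factors of $\lambda$, matching the $\lambda^2$ above. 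Here I would use the linearity of the sharp operator so that $(\lambda\alpha|_{V_1})^\sharp = \lambda(\alpha|_{V_1})^\sharp$, which is the key algebraic fact making the scaling work.

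For the first equation I would compute the left-invariant velocity of $g_\lambda$. Since $g_\lambda(t)=g(\lambda t)$, we have $\dot g_\lambda(t)=\lambda\,\dot g(\lambda t)$, and because $DL_{g_\lambda(t)^{-1}}=DL_{g(\lambda t)^{-1}}$ is the same map, $DL_{g_\lambda(t)^{-1}}\dot g_\lambda(t) = \lambda\,DL_{g(\lambda t)^{-1}}\dot g(\lambda t) = \lambda\,(\alpha(\lambda t)|_{V_1})^\sharp$. This must be compared with $(\alpha_\lambda(t)|_{V_1})^\sharp = (\lambda\,\alpha(\lambda t)|_{V_1})^\sharp = \lambda\,(\alpha(\lambda t)|_{V_1})^\sharp$, and the two agree. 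Thus both equations hold and the rescaled pair is a solution.

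I do not expect a serious obstacle here: the statement is a clean homogeneity property of the ODE, and the only thing to be careful about is tracking the powers of $\lambda$ so that the one extra factor from differentiating $\alpha$ in time is absorbed by the explicit $\lambda$ in its definition (giving $\lambda^2$ in the $\alpha$-equation), while the single factor from differentiating $g$ is absorbed by the single explicit $\lambda$ (giving $\lambda^1$ in the $g$-equation). The mild conceptual content is simply that the flow equations are homogeneous of the right weights, i.e.\ the Hamiltonian $H$ is quadratic in the covector, which is exactly why the scaling $\alpha\mapsto\lambda\alpha$ together with $t\mapsto\lambda t$ preserves solutions.
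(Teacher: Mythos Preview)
Your proposal is correct and follows essentially the same direct verification as the paper: define the rescaled pair, apply the chain rule to pick up factors of $\lambda$, and use linearity of $(\cdot|_{V_1})^\sharp$ to match both equations of~\eqref{eq63d39dbe}. The only cosmetic difference is that the paper checks the $g$-equation first and the $\alpha$-equation second, while you reverse the order.
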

\begin{proof}
	Define $h(t) = g(\lambda t)$ and $\beta(t) = \lambda\alpha(\lambda t)$.
	Then 
	\[
	DL_{h(t)}^{-1}\dot h(t) 
	= \lambda DL_{g(\lambda t)}^{-1} \dot g(\lambda t)
	= \lambda (\alpha(\lambda t)|_{V_1})^\sharp
	= (\beta(t)|_{V_1})^\sharp ,
	\]
	and 
	\[
	\dot\beta(t)
	= \lambda^2 \dot\alpha(\lambda t)
	= \lambda^2 \alpha(\lambda t)\circ\ad_{(\alpha(\lambda t)|_{V_1})^\sharp}
	= \beta( t)\circ\ad_{(\beta( t)|_{V_1})^\sharp} .
	\]
	Therefore, $(h,\beta)$ is a solution to~\eqref{eq63d39dbe}.
\end{proof}

\begin{lemma}[Symmetries of the sub-Riemannian Hamiltonian flow: left translations]\label{lem6427c263}
	If $p\in G$ and if $(g,\alpha):I\to G\times\frk g^*$ is a solution to~\eqref{eq63d39dbe},
	then 
	\begin{equation*}
	t\mapsto \left( L_p(g(t)) , \alpha(t)  \right) 
	\end{equation*}
	is also a solution to~\eqref{eq63d39dbe}.
\end{lemma}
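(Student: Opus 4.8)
The plan is to verify directly that the pair $(h,\beta)$ defined by $h(t):=L_p(g(t))$ and $\beta(t):=\alpha(t)$ satisfies both lines of~\eqref{eq63d39dbe}, exactly as was done for the rescaling in Lemma~\ref{lem63ef47ae}. Since the covector component is left unchanged, $\beta=\alpha$, the second equation is immediate: $\dot\beta=\dot\alpha=\alpha\circ\ad_{(\alpha|_{V_1})^\sharp}=\beta\circ\ad_{(\beta|_{V_1})^\sharp}$. So the only substantive point is the first equation.

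For that, I would first differentiate $h(t)=L_p(g(t))$ to obtain $\dot h(t)=DL_p|_{g(t)}\dot g(t)$, and then apply $DL_{h(t)}^{-1}=DL_{h(t)^{-1}}|_{h(t)}$. The key algebraic identity is that $h(t)^{-1}=g(t)^{-1}p^{-1}$, so that the composition of left translations telescopes, $L_{h(t)^{-1}}\circ L_p=L_{h(t)^{-1}p}=L_{g(t)^{-1}}$. By the chain rule this yields $DL_{h(t)^{-1}}|_{h(t)}\circ DL_p|_{g(t)}=DL_{g(t)^{-1}}|_{g(t)}=DL_{g(t)}^{-1}$, whence $DL_{h(t)}^{-1}\dot h(t)=DL_{g(t)}^{-1}\dot g(t)=(\alpha(t)|_{V_1})^\sharp=(\beta(t)|_{V_1})^\sharp$, which is precisely the first line of~\eqref{eq63d39dbe} for the pair $(h,\beta)$.

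Conceptually, this is nothing more than the statement that~\eqref{eq63d39dbe} is a left-invariant ODE: the left-hand side of its first line is the left-logarithmic derivative of $g$, which is unaffected by left translation of the base point, while the right-hand sides of both lines depend only on $\alpha$, which we do not move. There is no genuine obstacle here; the argument is entirely formal once the telescoping identity for left translations is written down. The only point requiring a little care is the correct bookkeeping of base points in the differentials $DL_p|_{g(t)}$ and $DL_{h(t)^{-1}}|_{h(t)}$, so that the chain rule is applied at the right tangent spaces.
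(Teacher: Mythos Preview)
Your proof is correct and follows exactly the same approach as the paper's: the second equation of~\eqref{eq63d39dbe} is unchanged because it does not involve $g(t)$, and for the first equation the paper simply records the identity $DL^{-1}_{pg(t)}(DL_p\dot g(t)) = DL^{-1}_{g(t)}\dot g(t)$, which is your telescoping computation written in one line.
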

\begin{proof}
	The second equation in~\eqref{eq63d39dbe} does not depend on $g(t)$.
	In the first equation we have
	\[
	DL^{-1}_{pg(t)}(DL_p\dot g(t)) = DL^{-1}_{g(t)}\dot g(t) ,
	\]
	and thus the curve $(L_p(g(t)) , \alpha(t))$ is still a solution to~\eqref{eq63d39dbe}.
\end{proof}

\begin{lemma}[Symmetries of the sub-Riemannian Hamiltonian flow: homotheties]\label{lem63ef47a6}
	Let $L:G\to G$ be a Lie group automorphism with Lie algebra automorphism $\ell:\frk g\to\frk g$.
	Assume that $\ell(V_1)=V_1$ and that there exists $\lambda\in\R$ with $\langle \ell v,\ell w \rangle = \lambda^2 \langle v,w \rangle$ for all $v,w\in V_1$.
	
	If $(g,\alpha):I\to G\times\frk g^*$ is a solution to~\eqref{eq63d39dbe},
	then 
	\begin{equation}\label{eq63ef48b3}
	t\mapsto \left( L(g(t)) , \lambda^2 \alpha(t)\circ\ell^{-1}  \right) 
	\end{equation}
	is also a solution to~\eqref{eq63d39dbe}.
\end{lemma}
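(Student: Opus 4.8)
The plan is to set $h(t):=L(g(t))$ and $\beta(t):=\lambda^2\,\alpha(t)\circ\ell^{-1}$ and to verify directly that the pair $(h,\beta)$ satisfies the two equations in~\eqref{eq63d39dbe}. Everything rests on a single algebraic identity relating $\ell$ to the musical isomorphism on $V_1$, namely that for every $\alpha\in\frk g^*$ one has
\[
\ell\bigl((\alpha|_{V_1})^\sharp\bigr) = \bigl((\lambda^2\,\alpha\circ\ell^{-1})|_{V_1}\bigr)^\sharp .
\]
I would prove this first, since the rest follows mechanically from it. To establish it, fix $w\in V_1$ and write $w=\ell(u)$ with $u\in V_1$, which is legitimate because $\ell(V_1)=V_1$. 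The conformality hypothesis $\langle\ell v,\ell w\rangle=\lambda^2\langle v,w\rangle$ then gives $\langle \ell((\alpha|_{V_1})^\sharp),w\rangle = \lambda^2\langle(\alpha|_{V_1})^\sharp,u\rangle = \lambda^2\langle\alpha|u\rangle$, while by definition of $\sharp$ the right-hand side pairs with $w$ to give $\lambda^2\langle\alpha|\ell^{-1}w\rangle=\lambda^2\langle\alpha|u\rangle$. Since the two vectors have the same inner product against every $w\in V_1$, they coincide.

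For the first equation of~\eqref{eq63d39dbe}, I would use that $L$ is a group automorphism, so $L\circ L_{g}=L_{L(g)}\circ L$, which upon differentiation at the identity yields $DL|_{g}\circ DL_g|_e = DL_{L(g)}|_e\circ\ell$. Hence
\[
DL_{h(t)}^{-1}\dot h(t) = DL_{L(g(t))}^{-1}\,DL|_{g(t)}\dot g(t) = \ell\bigl(DL_{g(t)}^{-1}\dot g(t)\bigr) = \ell\bigl((\alpha(t)|_{V_1})^\sharp\bigr),
\]
and by the identity above this equals $(\beta(t)|_{V_1})^\sharp$, which is exactly what the first equation demands.

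For the second equation, differentiating gives $\dot\beta(t)=\lambda^2\,\dot\alpha(t)\circ\ell^{-1}=\lambda^2\,\alpha(t)\circ\ad_{(\alpha(t)|_{V_1})^\sharp}\circ\ell^{-1}$. Because $\ell$ is a Lie algebra automorphism, $\ell\circ\ad_X\circ\ell^{-1}=\ad_{\ell(X)}$ for all $X\in\frk g$; applying this with $X=(\alpha(t)|_{V_1})^\sharp$ and invoking once more the identity $\ell((\alpha(t)|_{V_1})^\sharp)=(\beta(t)|_{V_1})^\sharp$, I obtain $\dot\beta(t)=\lambda^2\,\alpha(t)\circ\ell^{-1}\circ\ad_{(\beta(t)|_{V_1})^\sharp}=\beta(t)\circ\ad_{(\beta(t)|_{V_1})^\sharp}$, closing the verification.

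The only genuine obstacle is the musical identity in the first paragraph: one must check that the conformal scaling of the metric on $V_1$ by $\lambda^2$ is precisely what compensates the factor $\lambda^2$ and the $\ell^{-1}$ built into the definition of $\beta$, so that $\ell$ intertwines the two sharp operators. Once this is in hand, the two ODE verifications are routine transport computations relying on the homomorphism property of $L$ and the automorphism property of $\ell$.
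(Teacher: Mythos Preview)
Your proof is correct and follows essentially the same approach as the paper's: you first establish the key musical identity $\ell((\alpha|_{V_1})^\sharp)=((\lambda^2\alpha\circ\ell^{-1})|_{V_1})^\sharp$ (the paper's~\eqref{eq6492c851}, stated there for $\alpha\in V_1^*$), and then verify the two ODEs by the same direct computations. Your justification of $DL_{h(t)}^{-1}\dot h(t)=\ell\bigl(DL_{g(t)}^{-1}\dot g(t)\bigr)$ via the automorphism identity $L\circ L_g=L_{L(g)}\circ L$ is slightly more explicit than the paper's one-line version, but the argument is the same.
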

In fact, the existence of a homothety like in Lemma~\ref{lem63ef47a6} implies that the group is a Carnot group, see \cite{MR4256009}.
\begin{proof}
	First of all, notice that, if $\alpha\in V_1^*$, then for all $w\in V_1$ we have
	\[
	\langle \ell\alpha^\sharp,w \rangle
	= \lambda^2 \langle \alpha^\sharp , \ell^{-1} w \rangle
	= \lambda^2 \langle \alpha | \ell^{-1}w \rangle
	= \langle \lambda^2\alpha\circ\ell^{-1} | w \rangle .
	\]
	Therefore
	\begin{equation}\label{eq6492c851}
	\ell\alpha^\sharp
	= \lambda^2 \left(\alpha\circ\ell^{-1}\right)^\sharp .
	\end{equation}
	
	Next, define $h(t) = L(g(t))$ and $\beta(t) = \lambda^2 \alpha(t)\circ\ell^{-1} $.
	Then, using both~\eqref{eq6492c851} and~\eqref{eq63d39dbe}, we have
	\[
	DL_{h(t)}^{-1}\dot h(t)
	= \ell DL_{g(t)}^{-1}\dot g(t)
	= \ell (\alpha(t)|_{V_1})^\sharp
	= \left( \lambda^2\alpha (t)|_{V_1} \circ\ell^{-1}  \right)^\sharp
	= (\beta(t)|_{V_1})^\sharp .
	\]
	Similarly,
	\begin{align*}
	\dot\beta(t)
	&= \lambda^2  \dot\alpha(t)\circ\ell^{-1} 
	= \lambda^2 \alpha(t)\circ\ad_{(\alpha(t)|_{V_1})^\sharp}\circ\ell^{-1}  \\
	&= \lambda^2 \alpha(t)\circ\ell^{-1}\circ\ad_{\ell(\alpha(t)|_{V_1})^\sharp} \\
	&= \lambda^2 \alpha(t)\circ\ell^{-1}\circ\ad_{\left(\lambda^2\alpha(t)\circ\ell^{-1}|_{V_1}\right)^\sharp}  \\
	&= \beta(t)\circ\ad_{\left(\beta(t)|_{V_1}\right)^\sharp} .
	\end{align*}
	Therefore, $(h,\beta)$ is a solution to~\eqref{eq63d39dbe}.
\end{proof}

%%%%%%%%%%%%%%%%%%%%%%%%%%%%%%%%%%%%%%%%%%%%%%%%%%%%%%%%%%%%%%%
\subsection{The sub-Riemannian exponential map}
\label{sec64dc6371}
Notice that the Hamiltonian vector field $\cal X_H$ defined in~\eqref{eq63d1b115} is complete.
Indeed, by Proposition~\ref{prop63d45030}, we only need to show that if $(g,\alpha):(a,b)\to G\times\frk g^*$ is an integral curve of $\cal X_H$, then $g:(a,b)\to G$ can be continuously extended to the closed interval $[a,b]$.
We know that the curve $g$ is a length-minimizing curve parametrized by constant speed
with respect to a left-invariant sub-Riemannian distance on $G$, see for instance~\cite{MR1867362,MR3971262}.
Since such a distance is complete, the curve $g$ has a continuous extension to the closed interval $[a,b]$.
It follows that $\cal X_H$ is a complete vector field.

\begin{definition}
	Given a Lie group $G$ with Lie algebra $\frk g$ and a bracket generating subspace $V_1\subset\frk g$ endowed with a scalar product $\langle \cdot,\cdot \rangle$.
	We consider the left-invariant Hamiltonian function 
	$H:G \times \frk g^*\to\R$ defined as in~\eqref{eq6412f2b5}.
	The \emph{sub-Riemannian exponential map} is the function 
	\[
	\sre:\frk g^*\to G
	\] 
	that maps every $\xi\in \frk g^* = T_e^*G$ to the end point $\sre(\xi) = g(1)$ of the solution $(g,\alpha):[0,1]\to T^*G$ of the Hamiltonian system~\eqref{eq63d3a6ab}, 
	which becomes in this case~\eqref{eq63d39dbe},
	with $g(0)=e$ and $\alpha(0)=\xi$.
\end{definition}

Notice that $\sre$ is an analytic function defined on the whole space $\frk g^*$.
Indeed, since the Hamiltonian $H$ is analytic, the Hamiltonian vector field $\cal X_H$ defined in~\eqref{eq63d1b115} is also analytic.
By the Cauchy--Kovalevskaya Theorem, the flow of $\cal X_H$ on $T^*G$ is analytic.
Since $\sre$ is the restriction of the flow of $\cal X_H$ to $T_e^*G$ composed with the bundle projection $T^*G\to G$, we conclude that $\sre$ is an analytic function.

%%%%%%%%%%%%%%%%%%%%%%%%%%%%%%%%%%%%%%%%%%%%%%%%%%%%%%%%%%%%%%%
\section{Preliminaries on Carnot groups}
\label{sec6492eb72}

\subsection{Carnot groups}
For a more detailed introduction to Carnot groups, we suggest to consult~\cite{MR3742567}.
A \emph{Carnot group} is a connected simply connected Lie group $G$ whose Lie algebra $\frk g$ 
has a fixed stratification $\frk g = \bigoplus_{j=1}^s V_j$
and a fixed scalar product $\langle \cdot,\cdot \rangle$ on $V_1$.
A \emph{stratification} is a linear splitting $\frk g = \bigoplus_{j=1}^s V_j$ where $[V_1,V_j]=V_{j+1}$ for $j\in\{1,\dots,s-1\}$ and $[V_1,V_s]=\{0\}$.

Since $V_1$ Lie generates $\frk g$, 
the left-invariant horizontal vector bundle $\tilde V_1\subset TG$ is bracket generating
and together with the scalar product $\langle \cdot,\cdot \rangle$ on $V_1$,
a sub-Riemannian distance is determined on $G$, see~\cite{MR3742567}.
Although we will not directly deal with distances in this article,
when we will speak of length-minimizing curves we mean with respect to the sub-Riemannian distance induced by the choice of $V_1$ and of $\langle \cdot,\cdot \rangle$ on $V_1$.

Since $G$ turns out to be a nilpotent group,
the group exponential map $\exp:\frk g\to G$ is a global diffeomorphism. Furthermore, the Haar measure $\vol$ on $G$ is just the pushforward measure of the Lebesgue measure on $\frk g$ by $\exp$.

We assume $V_s\neq\{0\}$, and so $G$ has \emph{step} $s$.
The \emph{(topological) dimension} of $G$ is $n = \sum_{j=1}^s \dim(V_j)$;
the \emph{homogeneous dimension} of $G$ is $Q =\sum_{j=1}^s j\dim(V_j)$.

The dual space $\frk g^*$ inherits a splitting $\frk g^* = \bigoplus_{j=1}^sV_j^*$, 
where $V_j^* = \{\alpha\in\frk g^*:V_i\subset\ker\alpha,\ \forall i\neq j\}$.

For $\lambda\in\R$, \emph{dilation of factor $\lambda$} on $G$ is the group automorphism 
$\delta_\lambda:G\to G$ 
whose induced Lie algebra automorphism $(\delta_\lambda)_*:\frk g\to\frk g$ 
is the linear map 
$(\delta_\lambda)_*v = \lambda^jv$ for $v\in V_j$.
We usually denote $(\delta_\lambda)_*$ again by $\delta_\lambda$.

%%%%%%%%%%%%%%%%%%%%%%%%%%%%%%%%%%%%%%%%%%%%%%%%%%%%%%%%%%%%%%%
\subsection{Symmetries of the sub-Riemannian Exponential map on Carnot groups}
\label{sec6425df39}

Lemma~\ref{lem63ef47a6} translates to symmetries of the sub-Riemannian exponential map on Carnot groups.
For $\lambda\in\R\setminus\{0\}$, define $\eta_\lambda:\frk g^*\to\frk g^*$ as
\[
\eta_\lambda\left( \sum_{j=1}^s \xi_j \right) := \sum_{j=1}^s \lambda^{2-j} \xi_j .
\]
From Lemma~\ref{lem63ef47a6}, we obtain, for all $\xi\in\frk g^*$ and $\lambda\in\R\setminus\{0\}$,
\begin{equation}\label{eq64009645}
	\sre(\eta_\lambda \xi) 
	=
	\delta_\lambda\sre(\xi) 
	.
\end{equation}
Taking the Jacobian determinant in~\eqref{eq64009645}, we also get for $\lambda\in\R\setminus\{0\}$
\begin{equation}\label{eq64118940}
	\lambda^{2n-Q} \Jac(\sre)(\eta_\lambda(\xi))
	=
	\lambda^Q \Jac(\sre)(\xi), 
\end{equation}
where we used the fact that 
$\det(\eta_\lambda) = \lambda^{2n}\det(\delta_{1/\lambda}) = \lambda^{2n-Q}$.

\begin{remark}\label{rJac}
For the precise meaning of the Jacobian determinant here, we first fix coordinates on $\frk g^*$ and $\frk g$ (thus on $G$ by the group exponential map $\exp$) which preserve the Carnot group stratification respectively. Then the Jacobian determinant can be calculated in the usual Euclidean sense. Although different choices of the coordinates will change the value of the Jacobian determinant by multiplying a nonzero constant, it turns out that our definitions of $\Kmin(\xi)$, $\Kmin(G)$ and $\Kmax(G)$ in \eqref{eq6400a110} are independent of the choice of the coordinates and our proofs below remain the same. Furthermore, noticing by definition the geodesic dimension $N_{GEO}$ and the curvature exponent $N_{CE}$ are  
independent of the choice of the Haar measure,	in the following we can assume the Haar measure $\vol$ on $G$ is exactly the pushforward measure of the Lebesgue measure induced by the fixed coordinates on $\frk g$ without loss of generality. 
\end{remark}

For $\lambda\in\R$, define $\zeta_\lambda:\frk g^*\to\frk g^*$ as
\[
\zeta_\lambda\left( \sum_{j=1}^s \xi_j \right) := \sum_{j=1}^s \lambda^{j-1} \xi_j .
\]
Then $\lambda\xi = \eta_\lambda\zeta_\lambda(\xi)$ for all $\lambda\in\R\setminus\{0\}$.
If we substitute $\xi$ with $\zeta_\lambda(\xi)$ in~\eqref{eq64118940},
we get
\begin{equation}\label{eq64009980}
	\Jac(\sre)(\lambda\xi) = \lambda^{2Q-2n} \Jac(\sre)(\zeta_\lambda(\xi)) ,
\end{equation}
for all $\lambda\in\R$.
Notice that, when $\lambda\to0$, we have $\zeta_\lambda(\xi)\to\xi_1\in V_1^*$.

Recall Definition~\ref{def6401a951}: 
$\scr D\subset\frk g^*$ is the open set of all $\xi\in\frk g^*$ such that
$\Jac(\sre)(t\xi)\neq0$ for all $t\in(0,1]$ and $t\mapsto\sre(t\xi)$ is the unique constant-speed length-minimizing curve $[0,1]\to G$ from $e$ to $\sre(\xi)$.
From this definition, it follows that if $\xi\in\scr D$ then $t\xi\in\scr D$ for all $t\in(0,1]$.

We have defined $\cal S_e\subset G$ as the image $\sre(\scr D)$, 
so that $\sre$ is a diffeomorphism from $\scr D$ to $\cal S_e$.
We denote by $\rho:\cal S_e\to\scr D$ the inverse of $\sre$ on $\cal S_e$.
It is well known that $\cal S_e$ is dense in $G$, 
although it is not known whether it has full measure.

\begin{lemma}\label{lem64268086}
	If $\xi\in\scr D$, then $\zeta_\lambda(\xi)\in\scr D$ for all $\lambda\in(0,1]$.
\end{lemma}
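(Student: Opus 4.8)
The plan is to verify directly the two defining conditions of $\scr D$ in Definition~\ref{def6401a951} for the point $\zeta_\lambda(\xi)$, reducing each of them to the corresponding statement for the point $\lambda\xi$. This is convenient because $\lambda\xi$ already lies in $\scr D$: it was observed just after Definition~\ref{def6401a951} that $\xi\in\scr D$ implies $s\xi\in\scr D$ for every $s\in(0,1]$. The bridge between $\zeta_\lambda(\xi)$ and $\lambda\xi$ is the factorization $\lambda\xi=\eta_\lambda\zeta_\lambda(\xi)$ together with the symmetry~\eqref{eq64009645} and the Jacobian identity~\eqref{eq64009980}. Throughout I will use the elementary fact that ordinary scalar multiplication commutes with $\zeta_\lambda$, that is $\zeta_\lambda(t\xi)=t\,\zeta_\lambda(\xi)$, which holds because $\zeta_\lambda$ acts as a scalar on each stratum.

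First I would treat the Jacobian condition. Replacing $\xi$ by $t\xi$ in~\eqref{eq64009980} and using $\zeta_\lambda(t\xi)=t\,\zeta_\lambda(\xi)$ gives
\[
\Jac(\sre)(\lambda t\xi)=\lambda^{2Q-2n}\,\Jac(\sre)(t\,\zeta_\lambda(\xi)).
\]
Since $\lambda\in(0,1]$, the factor $\lambda^{2Q-2n}$ is nonzero, so $\Jac(\sre)(t\,\zeta_\lambda(\xi))\neq0$ exactly when $\Jac(\sre)(\lambda t\xi)\neq0$. For $t\in(0,1]$ we have $\lambda t\in(0,1]$, and because $\xi\in\scr D$ the Jacobian $\Jac(\sre)(s\xi)$ is nonzero for all $s\in(0,1]$; applying this with $s=\lambda t$ yields the claim.

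Next I would treat the minimizing condition. Applying the factorization to $t\xi$ gives $\lambda t\xi=\eta_\lambda\zeta_\lambda(t\xi)$, so~\eqref{eq64009645} together with $\zeta_\lambda(t\xi)=t\,\zeta_\lambda(\xi)$ yields $\sre(\lambda t\xi)=\delta_\lambda\sre(t\,\zeta_\lambda(\xi))$, that is
\[
\sre(t\,\zeta_\lambda(\xi))=\delta_{1/\lambda}\,\sre(\lambda t\xi),\qquad t\in[0,1].
\]
Because $\lambda\xi\in\scr D$, the curve $t\mapsto\sre(\lambda t\xi)=\sre(t\cdot\lambda\xi)$ is the unique constant-speed length-minimizing curve from $e$ to $\sre(\lambda\xi)$. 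The dilation $\delta_{1/\lambda}$ is a group automorphism fixing $e$ that scales the sub-Riemannian distance by the factor $1/\lambda$, hence a homothety of the metric space; it therefore carries this unique constant-speed minimizer to the unique constant-speed minimizer from $e=\delta_{1/\lambda}(e)$ to $\delta_{1/\lambda}\sre(\lambda\xi)=\sre(\zeta_\lambda(\xi))$. The displayed identity identifies that image curve with $t\mapsto\sre(t\,\zeta_\lambda(\xi))$, which establishes the second condition and shows $\zeta_\lambda(\xi)\in\scr D$.

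The only delicate point is the bookkeeping of the three scaling maps---$\delta_\lambda$ on $G$ and $\eta_\lambda,\zeta_\lambda$ on $\frk g^*$---and in particular getting the single identity $\sre(t\,\zeta_\lambda(\xi))=\delta_{1/\lambda}\sre(\lambda t\xi)$, which transports \emph{both} conditions simultaneously from $\lambda\xi$ to $\zeta_\lambda(\xi)$. Once this is in place, everything rests on the homothety property of dilations, which is standard for Carnot groups, so no genuine obstacle remains.
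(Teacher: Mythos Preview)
Your proof is correct and follows essentially the same approach as the paper's: both arguments reduce the two defining conditions at $\zeta_\lambda(\xi)$ to those at $\lambda\xi$ via the identities~\eqref{eq64009980} and~\eqref{eq64009645}. The only cosmetic difference is in the uniqueness step: the paper starts from an arbitrary constant-speed minimizer $\eta$ to $\sre(\zeta_\lambda(\xi))$, pushes it forward by $\delta_\lambda$, invokes uniqueness at $\lambda\xi$, and pulls back; you instead push the known unique minimizer forward by $\delta_{1/\lambda}$ and invoke the homothety property directly---logically the same argument run in the opposite direction.
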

\begin{proof}
	We will apply 
	the Definition~\ref{def6401a951} itself.
	Fix $\xi\in\scr D$ and $\lambda\in(0,1]$.
	
	Firstly, from~\eqref{eq64009980}, we have for all $t\in(0,1]$,
	\[
	\Jac(\sre)(t\zeta_\lambda(\xi)) 
	= \Jac(\sre)(\zeta_\lambda(t \xi))
	= \frac{ \Jac(\sre)(\lambda t\xi) }{ \lambda^{2Q-2n} }
	\neq0
	\]
	because $0<\lambda t\le 1$.
	
	Secondly, suppose that $\eta:[0,1]\to G$ is a constant-speed length-minimizing curve from $e$ to $\sre(\zeta_\lambda(\xi))$.
	Then $\gamma(t):=\delta_\lambda(\eta(t))$ is also a constant-speed length-minimizing curve from $e$ to 
	\[
	\delta_\lambda(\sre(\zeta_\lambda(\xi))) 
	= \sre(\eta_\lambda\zeta_\lambda(\xi)) 
	= \sre(\lambda\xi) ,
	\]
	where we have used~\eqref{eq64009645}.
	Since $\lambda\xi\in\scr D$, then $\gamma(t) = \sre(t\lambda\xi)$.
	Therefore,
	\[
	\eta(t) = \delta_{1/\lambda}\gamma(t)
	= \delta_{1/\lambda}\sre(t\lambda\xi)
	= \sre(\eta_{1/\lambda}\eta_\lambda\zeta_\lambda(t\xi))
	= \sre(t\zeta_\lambda(\xi)) .
	\]
	We have thus shown that $\zeta_\lambda(\xi)\in\scr D$.
\end{proof}

Since $\sre$ is an analytic map, the function $\frk g^*\times \R \to\R$, $(\xi,\lambda)\mapsto \Jac(\sre)(\zeta_\lambda(\xi))$, is also analytic.
In particular,
there are analytic functions $P_k:\frk g^*\to\R$
such that
for every $U\subset\frk g^*$ bounded there is $\lambda_0>0$ such that for all $\xi\in U$ and $\lambda\in(-\lambda_0,\lambda_0)$,

\begin{equation}\label{eq6492e37b}
\Jac(\sre)(\zeta_\lambda(\xi))
= \sum_{k=0}^\infty P_k(\xi) \lambda^k ,
\end{equation}
where the series converges absolutely and uniformly in $\xi\in U$. 

\begin{lemma}\label{lem6426f63f}
	Each $P_k$ is a homogeneous polynomial of degree $2Q-2n+k$, 
	that is, for every $\xi\in\frk g^*$ and $\mu\in\R$,
	\begin{equation}\label{eq6426f407}
		P_k(\mu\xi) = \mu^{2Q-2n+k} P_k(\xi).
	\end{equation}
	In particular, we have 
	\begin{equation}\label{homoG}
	\Kmin(\mu \xi) = \Kmin(\xi) , \quad \forall \mu \ne 0, \xi \in \frk g^*.
	\end{equation}
\end{lemma}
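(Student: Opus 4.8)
The plan is to read off the homogeneity relation directly from the Jacobian scaling identity~\eqref{eq64009980}, combined with two elementary algebraic properties of the maps $\zeta_\lambda$, and then to upgrade homogeneity to genuine polynomiality using analyticity. First I would record the two facts that drive everything. Each $\zeta_\lambda$ is \emph{linear} on $\frk g^*$, so $\zeta_\lambda(\mu\xi)=\mu\,\zeta_\lambda(\xi)$; and the family is \emph{multiplicative}, $\zeta_\mu\circ\zeta_\lambda=\zeta_{\mu\lambda}$, which follows at once from $\zeta_\lambda\bigl(\sum_j\xi_j\bigr)=\sum_j\lambda^{j-1}\xi_j$ since $\lambda^{j-1}\xi_j\in V_j^*$.

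Next, fix $\xi\in\frk g^*$ and $\mu\neq0$ and compute $\Jac(\sre)(\zeta_\lambda(\mu\xi))$ in two ways. By linearity $\zeta_\lambda(\mu\xi)=\mu\,\zeta_\lambda(\xi)$; applying~\eqref{eq64009980} with $\lambda$ replaced by $\mu$ and $\xi$ replaced by $\zeta_\lambda(\xi)$, and then the multiplicativity $\zeta_\mu\circ\zeta_\lambda=\zeta_{\mu\lambda}$, I obtain
\[
\Jac(\sre)(\zeta_\lambda(\mu\xi)) = \mu^{2Q-2n}\Jac(\sre)(\zeta_{\mu\lambda}(\xi)) .
\]
Now I expand both sides using the convergent power series~\eqref{eq6492e37b}, treating $\lambda$ as the variable and $\mu$ as a fixed parameter, for $\lambda$ in a sufficiently small interval around $0$ so that both $\lambda$ and $\mu\lambda$ lie inside the relevant radii of convergence. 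The left-hand side is $\sum_k P_k(\mu\xi)\lambda^k$, whereas the right-hand side equals $\mu^{2Q-2n}\sum_k P_k(\xi)(\mu\lambda)^k=\sum_k \mu^{2Q-2n+k}P_k(\xi)\lambda^k$. Matching the coefficients of $\lambda^k$, which are uniquely determined, yields precisely~\eqref{eq6426f407}, namely $P_k(\mu\xi)=\mu^{2Q-2n+k}P_k(\xi)$.

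To see that each $P_k$ is in fact a homogeneous \emph{polynomial}, I would invoke analyticity. The function $P_k$ is analytic and satisfies $P_k(\mu\xi)=\mu^{d}P_k(\xi)$ with $d=2Q-2n+k$, where $d$ is a nonnegative integer because $Q\ge n$ for every Carnot group. Expanding $P_k$ into its Taylor series at the origin and comparing homogeneous components on the two sides of this identity forces every component of degree $\neq d$ to vanish, so $P_k$ equals its homogeneous part of degree $d$. The remaining case $\mu=0$ then follows by continuity of both sides in $\mu$ (equivalently, a homogeneous polynomial of positive degree vanishes at the origin).

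Finally, the ``in particular'' claim~\eqref{homoG} is immediate from the homogeneity: for $\mu\neq0$ the factor $\mu^{d}$ is nonzero, hence $P_k(\mu\xi)\neq0$ if and only if $P_k(\xi)\neq0$; therefore $\{k:P_k(\mu\xi)\neq0\}=\{k:P_k(\xi)\neq0\}$ and their minima coincide, that is $\Kmin(\mu\xi)=\Kmin(\xi)$. I expect the only delicate point to be the bookkeeping in the power-series comparison—ensuring both series converge on a common $\lambda$-interval so that coefficient matching is legitimate—rather than any substantive difficulty; the passage from homogeneity to polynomiality is standard once analyticity is available.
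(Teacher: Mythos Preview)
Your argument is correct and follows essentially the same route as the paper: use linearity of $\zeta_\lambda$ and the semigroup property $\zeta_\mu\circ\zeta_\lambda=\zeta_{\mu\lambda}$, apply~\eqref{eq64009980}, expand both sides via~\eqref{eq6492e37b}, and match coefficients. Your version is slightly more thorough in that you spell out why homogeneity plus analyticity forces $P_k$ to be an actual polynomial and you handle $\mu=0$ explicitly, whereas the paper leaves these as implicit consequences of analyticity.
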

\begin{proof}
	Using the same notation we had for~\eqref{eq6492e37b}, 
	together with the linearity of the maps $\zeta_\lambda$ and the group property $\zeta_\mu\zeta_\lambda = \zeta_{\mu\lambda}$,
	we obtain for $\lambda$ and $\mu$ small enough,	
	\begin{align*}
		\sum_{k=0}^\infty P_k(\mu\xi) \lambda^k
		&= \Jac(\sre)(\mu\zeta_\lambda(\xi)) 
		= \mu^{2Q-2n} \Jac(\sre)(\zeta_\mu\zeta_\lambda(\xi)) \\
		&= \mu^{2Q-2n} \Jac(\sre)(\zeta_{\mu\lambda}(\xi)) 
		= \sum_{k=0}^\infty \mu^{2Q-2n+k} P_k(\xi) \lambda^k .
	\end{align*}
	Analyticity implies that~\eqref{eq6426f407} holds
	for all $\xi\in\frk g^*$, all $\mu\in\R$ and all $k\in\N$.
\end{proof}

%%%%%%%%%%%%%%%%%%%%%%%%%%%%%%%%%%%%%%%%%%%%%%%%%%%%%%%%%%%%%%%
\subsection{Intermediate points of a negligible set are negligible}

The goal of this section is to show the following Proposition~\ref{prop64c38067} after two auxiliary lemmas.
We will then have two consequences.
First, the equivalence of $\vol(Z_\lambda(e,E))$ with standard definitions in the literature, see Remark~\ref{rem64c3884d}.
Next, the formula~\eqref{eq6425debc} in Proposition~\ref{prop64255e1c} for the volume of $Z_\lambda(e,E)$.

\begin{proposition}\label{prop64c38067}
	Let $G$ be a Sard-regular Carnot group.
	If $E\subset G$ has measure zero, then $Z_t(e,E)$ has also measure zero, for all $t\in[0,1]$.
\end{proposition}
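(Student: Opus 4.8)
The plan is to reduce the statement about $t$-intermediate points to a statement about the sub-Riemannian exponential map, and then exploit analyticity to conclude that the set of parameters giving rise to points of $Z_t(e,E)$ is itself negligible whenever $E$ is. Since $G$ is Sard-regular, $\cal S_e = \sre(\scr D)$ has full measure, so up to discarding a null set I may assume $E\subset\cal S_e$; the correspondence $\rho:\cal S_e\to\scr D$ is a diffeomorphism, so $\rho(E)\subset\scr D$ is a null set in $\frk g^*$. The key observation is that a point $z\in Z_t(e,E)$ lies on a length-minimizing geodesic from $e$ to some $q\in E$, parametrized at the fraction $t$. For $q\in\cal S_e$, that geodesic is exactly $s\mapsto\sre(s\rho(q))$, so the $t$-intermediate point is $z = \sre(t\rho(q))$.

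\textbf{First} I would make this correspondence precise. For $q\in\cal S_e$, Definition~\ref{def6401a951} guarantees that $s\mapsto\sre(s\rho(q))$ is \emph{the unique} constant-speed length-minimizing curve from $e$ to $q$; hence the only candidate for a $t$-intermediate point between $e$ and $q$ is $\sre(t\rho(q))$. Consequently
\[
Z_t(e,E)\cap\cal S_e' \subset \sre\bigl(t\,\rho(E\cap\cal S_e)\bigr)
\]
for a suitable full-measure reference, and more importantly
\[
Z_t(e,E) \subset \sre\bigl(t\,\rho(E\cap\cal S_e)\bigr) \cup \bigl(G\setminus\cal S_e\bigr),
\]
because any $t$-intermediate point arising from a $q\notin\cal S_e$ must itself be accounted for separately. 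The set $\rho(E\cap\cal S_e)$ is null in $\frk g^*$, hence so is $t\,\rho(E\cap\cal S_e)$ (scaling by $t$ is a diffeomorphism for $t\in(0,1]$, and the case $t=0$ gives the single point $e$), and since $\sre$ is analytic it is locally Lipschitz and therefore maps null sets to null sets. This handles the contribution of intermediate points whose endpoint $q$ lies in $\cal S_e$.

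\textbf{The main obstacle} is controlling intermediate points whose endpoint $q$ lies in the \emph{null} set $G\setminus\cal S_e$: for such $q$ the minimizing geodesic need not be unique, need not be of the form $\sre(s\xi)$, and $Z_t$ could in principle pick up a large set of $z$'s. The natural fix is to exploit Sard-regularity more symmetrically. An intermediate point $z$ satisfies $d(e,z)=t\,d(e,q)$, so $z$ lies at parameter $t$ along a minimizer; but equally $q$ lies at parameter $(1-t)$ along the reversed minimizer from $z$. I would instead fix the point $z$ and observe that if $z\in\cal S_e$ then running the unique geodesic from $e$ through $z$ and extending it determines $q$ uniquely, so the ``bad'' $z$'s are confined to $G\setminus\cal S_e$, which is null by hypothesis. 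Thus
\[
Z_t(e,E) \subset \sre\bigl(t\,\rho(E\cap\cal S_e)\bigr)\ \cup\ \bigl(G\setminus\cal S_e\bigr),
\]
and both sets on the right are null: the first by the analyticity/Lipschitz argument above, the second by Sard-regularity. The delicate point to verify carefully is that every $z\in Z_t(e,E)\cap\cal S_e$ really does arise as $\sre(t\rho(q))$ for some $q\in E\cap\cal S_e$, i.e.\ that the geodesic realizing $z$ as a $t$-intermediate point can be taken to be the distinguished one through $\cal S_e$; this uses the uniqueness clause in Definition~\ref{def6401a951} together with the fact that a sub-arc of the unique minimizer from $e$ to $z$ is again the unique minimizer to its endpoint.

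\textbf{To conclude}, I would assemble the inclusion, bound $\vol\bigl(\sre(t\rho(E\cap\cal S_e))\bigr)=0$ via the Lipschitz-image-of-null-set property of the analytic map $\sre$ (restricted to a bounded exhaustion of $\scr D$ on which it is genuinely Lipschitz), and bound $\vol(G\setminus\cal S_e)=0$ directly by Sard-regularity. The case $t=0$ is trivial since $Z_0(e,E)=\{e\}$ (or $\emptyset$), and $t=1$ gives $Z_1(e,E)\subset\overline{E}$-type control, or more simply follows by the symmetric role of $e$ and $q$ together with left-invariance. Hence $\vol(Z_t(e,E))=0$ for all $t\in[0,1]$, as claimed.
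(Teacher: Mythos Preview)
Your decomposition according to whether the intermediate point $z$ lies in $\cal S_e$ or not matches the paper's strategy, and the case $z\notin\cal S_e$ is indeed disposed of by Sard-regularity. The gap is in your treatment of the case $z\in\cal S_e$ with endpoint $q\in E\setminus\cal S_e$. Your claimed inclusion
\[
Z_t(e,E)\cap\cal S_e \ \subset\ \sre\bigl(t\,\rho(E\cap\cal S_e)\bigr)
\]
would force every such $z$ to come from some endpoint $q\in E\cap\cal S_e$, and this is simply false. In the Heisenberg group, take $q$ a nonzero point on the center and $E=\{q\}$; then $q\notin\cal S_e$, but $Z_{1/2}(e,\{q\})$ is a full circle of midpoints, all of which lie in $\cal S_e$. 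Since $E\cap\cal S_e=\emptyset$, your right-hand side is empty while the left-hand side is not. The ``delicate point'' you flag is not merely delicate; as stated it is wrong.

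The paper fixes this with two extra ingredients you are missing. First (Lemma~\ref{lem64c3701a}): if $z\in\cal S_e$ and $\gamma:[0,1]\to G$ is a minimizer to $q$ with $\gamma(t)=z$, then $\gamma|_{[0,t]}$ agrees with $u\mapsto\sre(u\,\rho(z))$, hence is regular for the End-point map; regularity on a sub-arc forces $\gamma$ to be strictly normal on all of $[0,1]$, so $\gamma(u)=\sre(u\eta)$ for some $\eta\in\frk g^*$. Crucially $\eta$ need not lie in $\scr D$, so one obtains only
\[
Z_t(e,E\setminus\cal S_e)\cap\cal S_e \ \subset\ \sre\bigl(t\,\sre^{-1}(E)\bigr),
\]
with the \emph{full} preimage $\sre^{-1}(E)$ replacing your $\rho(E\cap\cal S_e)$. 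Second (Lemma~\ref{lem64c37bb5}): since $\Jac(\sre)$ is analytic and not identically zero, its zero set is null, so $\sre^{-1}$ of a null set is null; this is what makes $\sre^{-1}(E)$ negligible. Your sketch invokes neither the strict-normality extension nor the analyticity argument for the preimage, and without them the case $q\notin\cal S_e$, $z\in\cal S_e$ remains uncontrolled.
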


\begin{remark}\label{rem64c3884d}
	We obtain from Proposition~\ref{prop64c38067} that, 
	in Sard-regular Carnot groups,
	our definition of intermediate points~\eqref{eq64c38a13}
	is ``almost equivalent'' to the definition given in~\cite{MR3852258,MR3502622}.
	
	Indeed, the set $E_{e,\lambda}$ defined in 
	\cite[Definition 5.43]{MR3852258}
	or
	\cite[Eq.(1)]{MR3502622}
	corresponds, in our notation, to $Z_\lambda(e,E\cap\cal S_e)$.
	However, Proposition~\ref{prop64c38067} implies that 
	$\vol(Z_\lambda(e,E)) = \vol(Z_\lambda(e,E\cap\cal S_e)) = \vol(E_{e,\lambda})$.
\end{remark}

The following lemma is well known to experts.
For the notions of \emph{End point map}, \emph{regular} and \emph{strictly normal} curves, 
see \cite{MR3308395,MR3971262}

\begin{lemma}\label{lem64c3701a}
	Let $\gamma:[0,1]\to G$ be a length-minimizing curve parametrized with constant speed.
	Suppose that there exist $\xi\in\scr D$ and $t$ such that $\gamma(ut)=\sre(u\xi)$ for all $u\in[0,1]$.
	Then there exists $\eta\in\frk g^*$ such that $\gamma(u)=\sre(u\eta)$ for all $u\in[0,1]$.
\end{lemma}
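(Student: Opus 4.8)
The plan is to rule out any abnormal behaviour of $\gamma$ by exploiting the non-degeneracy built into $\scr D$, and then to recover the normal geodesic by uniqueness of the Hamiltonian lift. Since $\gamma$ is length-minimizing with constant speed, the Pontryagin Maximum Principle provides a nontrivial extremal lift of $\gamma$ on $[0,1]$, which is either normal or abnormal. The goal is to show that the abnormal alternative is incompatible with the hypothesis $\xi\in\scr D$.

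First I would record the key consequence of the condition $\Jac(\sre)(\xi)\neq0$: the sub-Riemannian exponential factors as $\sre=\mathrm{End}\circ\iota$, where $\iota$ assigns to a covector the associated normal control and $\mathrm{End}$ is the end point map. A geodesic $u\mapsto\sre(u\xi)$ is abnormal precisely when $\iota(\xi)$ is a critical point of $\mathrm{End}$, i.e.\ when $d\,\mathrm{End}_{\iota(\xi)}$ fails to be surjective; by the chain rule this would force $d\sre_\xi$ to have non-full rank, hence $\Jac(\sre)(\xi)=0$. Since $\xi\in\scr D$ guarantees $\Jac(\sre)(\xi)\neq0$, the arc $u\mapsto\sre(u\xi)$ admits no abnormal lift; being the projection of a normal extremal, it is \emph{strictly normal}, and therefore its normal lift is unique.

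Next I would propagate this to all of $\gamma$. If $\gamma$ carried an abnormal lift $\lambda$ on $[0,1]$, then $\lambda$ is a nowhere-vanishing solution of the adjoint linear equation, so its restriction to $[0,t]$ is again an abnormal lift; after the reparametrization $u\mapsto\gamma(ut)$ this would produce an abnormal lift of $u\mapsto\sre(u\xi)$, contradicting strict normality. Hence $\gamma$ is not abnormal, so it is normal with a unique normal lift $(\gamma,\lambda)$. Finally, the change-of-speed symmetry (Lemma~\ref{lem63ef47ae}) rewrites the hypothesis as $\gamma(s)=\sre\bigl((s/t)\xi\bigr)$ for $s\in[0,t]$, identifying $\gamma|_{[0,t]}$ with the normal geodesic of initial covector $\xi/t$; by uniqueness of the normal lift we get $\lambda(0)=\xi/t$, and since the projected Hamiltonian flow starting from $\lambda(0)$ is exactly $u\mapsto\sre(u\lambda(0))$, we conclude $\gamma(u)=\sre(u\eta)$ with $\eta=\xi/t$.

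The main obstacle is the strict-normality step: making precise, with the cited references on the end point map, that $\Jac(\sre)(\xi)\neq0$ excludes an abnormal lift, and that a strictly normal minimizer possesses a unique normal lift so that the Hamiltonian ODE extends the covector from $[0,t]$ to $[0,1]$ unambiguously. The remaining bookkeeping — reparametrization invariance of abnormal lifts and the rescaling of covectors — is routine given Lemma~\ref{lem63ef47ae}.
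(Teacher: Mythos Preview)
Your argument is correct and matches the paper's: both use that $\Jac(\sre)(\xi)\neq0$ forces the differential of the end-point map to be surjective at the control of $\gamma|_{[0,t]}$ (your factorisation $\sre=\mathrm{End}\circ\iota$ is exactly the paper's observation that the image of $d\,\mathrm{End}$ contains that of $d\,\sre$), and then pass from regularity of the subarc to regularity of the full arc, yielding a normal lift. Your extra identification $\eta=\xi/t$ via uniqueness of the normal lift of a strictly normal curve is a pleasant bonus the paper does not state, but it is not needed for the lemma.
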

\begin{proof}
	Since $\xi\in\scr D$, then the restriction $\gamma|_{[0,t]}$ is \emph{regular} for the End point map, because the image of the differential of the End point map contains the image of the differential of the sub-Riemannian exponential map (see the proof of Lemma 2.31 in \cite{MR3569245}).
	It follows that $\gamma$ is also regular for the End point map, and thus \emph{strictly normal
}.
	In particular, there exists $\eta\in\frk g^*$ such that $\gamma(u)=\sre(u\eta)$ for all $u\in[0,1]$.
\end{proof}

\begin{lemma}\label{lem64c37bb5}
	If $E\subset G$ has zero measure, then $\sre^{-1}(E)\subset\frk g^*$ has also zero measure.
\end{lemma}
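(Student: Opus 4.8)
The plan is to decompose $\frk g^*$ according to where $\sre$ is a local diffeomorphism and to handle its critical set separately. Set $\Sigma:=\{\xi\in\frk g^*:\Jac(\sre)(\xi)=0\}$ and write
\[
\sre^{-1}(E) = \bigl(\sre^{-1}(E)\cap\Sigma\bigr)\cup\bigl(\sre^{-1}(E)\setminus\Sigma\bigr),
\]
showing that each piece is Lebesgue-null in $\frk g^*\cong\R^n$.

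First I would argue that $\Sigma$ itself has measure zero. Since $\sre$ is analytic, $\Jac(\sre)$ is a real-analytic function on the connected space $\frk g^*$, and it is not identically zero: by Definition~\ref{def6401a951}, for every $\xi\in\scr D$ we have $\Jac(\sre)(\xi)\neq0$ (take $t=1$), and $\scr D$ is nonempty because $\cal S_e=\sre(\scr D)$ is dense in $G$. The zero set of a nontrivial real-analytic function has Lebesgue measure zero, so $\Sigma$ is null; in particular $\sre^{-1}(E)\cap\Sigma\subset\Sigma$ is null.

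Next I would handle the complement. On the open set $\frk g^*\setminus\Sigma$ the differential of $\sre$ is invertible, so by the inverse function theorem each point has an open neighborhood on which $\sre$ restricts to a diffeomorphism onto an open subset of $G$. Since $\frk g^*\setminus\Sigma$ is second countable, hence Lindel\"of, I can extract a countable family of such open sets $\{U_i\}_i$ covering $\frk g^*\setminus\Sigma$, with $\sre|_{U_i}:U_i\to V_i:=\sre(U_i)$ a diffeomorphism. A diffeomorphism between open subsets of manifolds, being locally Lipschitz together with its inverse, preserves null sets in both directions; and since $\vol$ is a smooth Haar measure, $\vol$-negligibility coincides with Lebesgue-negligibility in charts. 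Hence each $(\sre|_{U_i})^{-1}(E)=(\sre|_{U_i})^{-1}(E\cap V_i)$ is null, and
\[
\sre^{-1}(E)\setminus\Sigma\subset\bigcup_i (\sre|_{U_i})^{-1}(E)
\]
is a countable union of null sets, hence null. Combining the two pieces gives that $\sre^{-1}(E)$ is null.

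The only point requiring genuine input is the non-vanishing of $\Jac(\sre)$ at some point, which makes $\Sigma$ a proper analytic set; this is immediate from the density of $\cal S_e$ via Definition~\ref{def6401a951}, so no Sard-regularity is actually needed. Everything else is the standard fact that local diffeomorphisms pull back null sets to null sets, organized by a Lindel\"of covering argument, and I do not expect any substantial obstacle. Note that the scaling identity~\eqref{eq64009980} is not required here, since non-triviality of the Jacobian already follows from $\scr D\neq\emptyset$.
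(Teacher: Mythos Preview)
Your argument is correct and rests on the same two pillars as the paper's: the zero set $\Sigma$ of the analytic, not-identically-zero function $\Jac(\sre)$ is null, and away from $\Sigma$ the map $\sre$ is a local diffeomorphism, hence pulls null sets back to null sets. The paper organizes this differently: it argues by contrapositive, picking a Lebesgue density point $\xi$ of $\sre^{-1}(E)$ with $\Jac(\sre)(\xi)\neq0$ (which exists precisely because $\Sigma$ is null), and then observes that $\sre$ is a diffeomorphism near $\xi$, forcing $E\supset\sre(\sre^{-1}(E))$ to have positive measure. Your direct decomposition with a Lindel\"of cover is slightly more explicit and avoids the density-point step; the paper's version is shorter but uses the same underlying facts. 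Your remark that Sard-regularity is not needed here is also consistent with the paper, which does not invoke it in this lemma.
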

\begin{proof}
	Suppose that $\sre^{-1}(E)\subset\frk g^*$ has positive measure.
	Since $\Jac(\sre)$ is an analytic function, there is a Lebesgue's density point $\xi$ of $\sre^{-1}(E)$ (see \cite[Theorem~1.8]{MR1800917}) such that $\Jac(\sre)(\xi)\neq0$.
	Hence, $\sre$ is a diffeomorphism on a neighbourhood of $\xi$ and thus $\sre(\sre^{-1}(E))$ has positive measure.
	Since $\sre(\sre^{-1}(E)) \subset E$, then $E$ has positive measure too.
\end{proof}

\begin{proof}[Proof of Proposition~\ref{prop64c38067}]
	Set $\Sing=G\setminus\cal S_e$.
	We decompose $Z_t(e,E)$ into the following three sets:
	\[
	Z_t(e,E) = Z_t(e,E\cap\cal S_e) \cup (Z_t(e,E\cap\Sing)\cap\Sing) \cup (Z_t(e,E\cap\Sing)\cap\cal S_e)
	\]
	
	Case 1: volume of $Z_t(e,E\cap\cal S_e)$.
	In this case we have
	\[
	Z_t(e,E\cap\cal S_e) = \sre(t \rho(E\cap\cal S_e))
	\]
	where $\vol(E\cap\cal S_e)=0$ and where $p\mapsto \sre(t \rho(p))$ is a diffeomorphism in a neighbourhood of $E\cap\cal S_e$.
	Since $\vol(E)=0$, then $\vol(Z_t(e,E\cap\cal S_e))=0$.

	Case 2: volume of $Z_t(e,E\cap\Sing)\cap\Sing$.
	Since we assume $G$ to be Sard-regular, then $\vol(\Sing)=0$ 
	and thus $\vol(Z_t(e,E\cap\Sing)\cap\Sing)=0$.
	
	Case 3: volume of $Z_t(e,E\cap\Sing)\cap\cal S_e$.
	We claim that 
	\begin{equation}\label{eq64c381b7}
	Z_t(e,E\cap\Sing)\cap\cal S_e \subset \sre(t \sre^{-1}(E)) .
	\end{equation}
	Indeed, let $z\in Z_t(e,E\cap\Sing)\cap\cal S_e$.
	Then there are a point $q\in E\cap\Sing$ and a length-minimizing geodesic $\gamma:[0,1]\to G$ parametrized with constant speed with $\gamma(0)=e$, $\gamma(1)=q$ and $\gamma(t)=z$.
	Since $z\in\cal S_e$, there is $\xi\in\scr D$ such that $\sre(\xi)=z$.
	By the definition of $\scr D$, we have $\sre(u\xi) = \gamma(ut)$ for all $u\in[0,1]$.
	From Lemma~\ref{lem64c3701a}, it follows that there is $\eta\in\frk g^*$ such that $\sre(u\eta)=\gamma(u)$.
	Thus, the claim~\eqref{eq64c381b7} is proven.
	
	By Lemma~\ref{lem64c37bb5}, we have $\vol(\sre(t \sre^{-1}(E))) = 0$ 
	and thus~\eqref{eq64c381b7} implies that $\vol(Z_t(e,E\cap\Sing)\cap\cal S_e) = 0 $.
\end{proof}

\begin{proposition}\label{prop64255e1c}
	If $G$ is a Sard-regular Carnot group, then,
	for every measurable $E\subset G$,
	\begin{equation}\label{eq6425debc}
		\vol(Z_\lambda(e,E)) = \lambda^{2Q-n}  \int_{\rho(E \cap \cal S_e)} | \Jac(\sre)(\zeta_\lambda(\xi)) | \dd\xi .
	\end{equation}
\end{proposition}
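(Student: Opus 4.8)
The plan is to reduce everything to the set $E\cap\cal S_e$, on which the sub-Riemannian exponential map provides explicit coordinates, and then to read off the volume by a single change of variables combined with the scaling identity~\eqref{eq64009980}. By Remark~\ref{rem64c3884d} (which rests on Proposition~\ref{prop64c38067}) we have $\vol(Z_\lambda(e,E)) = \vol(Z_\lambda(e,E\cap\cal S_e))$, so it suffices to compute the volume of the intermediate points arising from $E\cap\cal S_e$. We may assume $\lambda\in(0,1]$, the case $\lambda=0$ being trivial since then $Z_0(e,E)=\{e\}$ and the right-hand side vanishes because $2Q-n>0$.

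The key geometric step is the identity
\[
Z_\lambda(e,E\cap\cal S_e) = \sre\bigl(\lambda\,\rho(E\cap\cal S_e)\bigr).
\]
For the inclusion ``$\supseteq$'', given $q\in E\cap\cal S_e$ with $\xi:=\rho(q)\in\scr D$, the curve $u\mapsto\sre(u\xi)$ is the unique constant-speed length-minimizing curve from $e$ to $q$; its value at $u=\lambda$ therefore satisfies the two distance constraints in~\eqref{eq64c38a13}, so $\sre(\lambda\xi)\in Z_\lambda(e,\{q\})$. For ``$\subseteq$'', if $z\in Z_\lambda(e,E\cap\cal S_e)$ then there is $q\in E\cap\cal S_e$ with $d(e,z)+d(z,q)=d(e,q)$, so $z$ lies on some length-minimizing curve from $e$ to $q$; by uniqueness of the minimizer to $q\in\cal S_e$ this curve is $u\mapsto\sre(u\rho(q))$, and matching the distances forces $z=\sre(\lambda\rho(q))$.

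With this identification the computation is direct. Since $\xi\in\scr D$ implies $t\xi\in\scr D$ for $t\in(0,1]$ (Definition~\ref{def6401a951}), the set $\lambda\,\rho(E\cap\cal S_e)$ lies in $\scr D$, where $\sre$ is a diffeomorphism onto $\cal S_e$ and in particular injective; thus the area formula applies without multiplicity and gives
\[
\vol\bigl(\sre(\lambda\,\rho(E\cap\cal S_e))\bigr) = \int_{\lambda\,\rho(E\cap\cal S_e)} |\Jac(\sre)(\eta)|\dd\eta .
\]
Substituting $\eta=\lambda\xi$, so that $\dd\eta=\lambda^n\dd\xi$ with $n=\dim G$, and then invoking the scaling identity~\eqref{eq64009980} in the form $\Jac(\sre)(\lambda\xi)=\lambda^{2Q-2n}\Jac(\sre)(\zeta_\lambda(\xi))$, the integral becomes
\[
\lambda^n\int_{\rho(E\cap\cal S_e)}|\Jac(\sre)(\lambda\xi)|\dd\xi = \lambda^{2Q-n}\int_{\rho(E\cap\cal S_e)}|\Jac(\sre)(\zeta_\lambda(\xi))|\dd\xi,
\]
which is precisely~\eqref{eq6425debc}.

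The main obstacle is the geometric identity of the second paragraph: one must argue carefully that every $\lambda$-intermediate point of a point $q\in\cal S_e$ lies on the unique minimizing geodesic from $e$ to $q$ and hence equals $\sre(\lambda\rho(q))$, using both that membership in $\cal S_e$ guarantees uniqueness of minimizers and that $\lambda\rho(q)\in\scr D$. Once this is settled, the remaining steps -- injectivity of $\sre$ on $\lambda\,\rho(E\cap\cal S_e)$ for the area formula, measurability of $\rho(E\cap\cal S_e)$ since $\rho$ is a diffeomorphism, and the bookkeeping of the exponents $\lambda^n\cdot\lambda^{2Q-2n}=\lambda^{2Q-n}$ -- are entirely routine.
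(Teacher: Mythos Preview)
Your proof is correct and follows the same line as the paper's: reduce to $E\cap\cal S_e$ via Proposition~\ref{prop64c38067}, identify $Z_\lambda(e,E\cap\cal S_e)=\sre(\lambda\rho(E\cap\cal S_e))$, and then apply the area formula together with the scaling identity~\eqref{eq64009980}. The only difference is that you spell out the two inclusions for the geometric identity in detail, whereas the paper simply asserts it.
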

\begin{proof}
From Definition~\ref{def6401a951} 
and the definition of intermediate points,
we get 
that
\begin{align*}
Z_\lambda(e,E) 
&= Z_\lambda(e,E\cap\cal S_e) \cup Z_\lambda(e,E\setminus\cal S_e) \\
&= \sre(\lambda\rho(E \cap \cal S_e)) \cup Z_\lambda(e,E\setminus\cal S_e) .
\end{align*}
Proposition~\ref{prop64c38067} says that $\vol(Z_\lambda(e,E\setminus\cal S_e))=0$.

	Thus, by Remark \ref{rJac}, the area formula and the identity~\eqref{eq64009980},
	we conclude
	\begin{align*}
	\vol(Z_\lambda(e,E))
	&= \vol(\sre(\lambda\rho(E \cap \cal S_e))) \\
	&= \int_{\lambda\rho(E \cap \cal S_e )} |\Jac(\sre)(\xi)| \dd \xi \\
	&= \lambda^n \int_{\rho(E \cap \cal S_e)} |\Jac(\sre)(\lambda\xi) |\dd \xi \\
	&= \lambda^{2Q-n} \int_{\rho(E \cap \cal S_e)} |\Jac(\sre)(\zeta_\lambda(\xi))| \dd\xi .
	\end{align*} 
\end{proof}

%%%%%%%%%%%%%%%%%%%%%%%%%%%%%%%%%%%%%%%%%%%%%%%%%%%%%%%%%%%%%%%
\section{Proof of Theorem~\ref{thm6401aa05}}
\label{sec6425dfb4}
 
Thanks to Remark~\ref{rem64c3884d}, we can apply the following result 
by Agrachev--Barilari--Rizzi.

\begin{proposition}[{\cite[Theorem D, page 58]{MR3852258}}]\label{prop64c384b7}
	For any bounded, measurable set $E\subset\cal S_e$ with $0<\vol(E)<+\infty$ we have
	$\vol(Z_\epsilon(e,E))\sim \epsilon^{N_{GEO}}$
	for $\epsilon\to 0$.
\end{proposition}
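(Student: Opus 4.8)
The plan is to read the asymptotics of $\vol(Z_\lambda(e,E))$ off the volume formula~\eqref{eq6425debc}. Since $E\subset\cal S_e$ we have $E\cap\cal S_e=E$, so \eqref{eq6425debc} reads
\[
\vol(Z_\lambda(e,E))=\lambda^{2Q-n}\int_{\rho(E)}|\Jac(\sre)(\zeta_\lambda(\xi))|\dd\xi .
\]
Write $m:=\Kmin(G)$. The first step is to identify the pointwise limit of the integrand. By definition of $\Kmin(G)$ as the minimum of $\Kmin(\xi)$ over $\frk g^*$, no $P_k$ with $k<m$ can be nonzero at any point, so $P_0\equiv\dots\equiv P_{m-1}\equiv0$ while $P_m\not\equiv0$. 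As $P_m$ is a nonzero analytic (indeed, by Lemma~\ref{lem6426f63f}, homogeneous) function, its zero locus is Lebesgue-null; hence $\Kmin(\xi)=m$ for almost every $\xi$, and for each such $\xi$ the expansion~\eqref{eq6492e37b} gives
\[
\lambda^{-m}\,\Jac(\sre)(\zeta_\lambda(\xi))=\sum_{j\ge0}P_{m+j}(\xi)\lambda^{j}\xrightarrow{\lambda\to0^+}P_m(\xi).
\]
Thus the integrand $f_\lambda(\xi):=\lambda^{-m}|\Jac(\sre)(\zeta_\lambda(\xi))|$ converges almost everywhere on $\rho(E)$ to $|P_m(\xi)|$, and the statement reduces to showing $\int_{\rho(E)}f_\lambda\dd\xi\to C_E:=\int_{\rho(E)}|P_m|\dd\xi$ with $C_E\in(0,\infty)$.

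The lower bound is then immediate from Fatou's lemma: $\liminf_{\lambda\to0^+}\lambda^{-(2Q-n+m)}\vol(Z_\lambda(e,E))\ge\int_{\rho(E)}|P_m|\dd\xi=C_E$, and $C_E>0$ because $\vol(E)>0$ forces $\rho(E)$ to have positive Lebesgue measure (indeed $\vol(E)=\int_{\rho(E)}|\Jac(\sre)|$) while $|P_m|>0$ almost everywhere. The matching upper bound — and the finiteness of $C_E$ — is exactly the interchange of limit and integral, and the key is a majorant for $f_\lambda$ that is uniform in small $\lambda$ and integrable on $\rho(E)$. When $\rho(E)$ is bounded this is routine: the series~\eqref{eq6492e37b} converges uniformly on bounded sets, so $f_\lambda$ is bounded by a constant there for $\lambda$ small, and dominated convergence closes the argument. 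I would therefore first record that $\rho(E)$ is bounded in the case of primary interest. Writing $\xi=\sum_j\xi_j$ with $\xi_j\in V_j^*$, the geodesic $t\mapsto\sre(t\xi)$ has constant speed equal to the $V_1^*$-norm of $\xi_1$, so that norm equals the length $d(e,\sre(\xi))$ and is bounded by $\operatorname{diam}(E\cup\{e\})$; meanwhile the scaling $\eta_\lambda$ of~\eqref{eq64009645}, which preserves the cut time and, in step two, fixes $V_2^*$, forces the $V_2^*$-component of covectors in $\scr D$ to be uniformly bounded. Hence in step-two groups — the setting of our applications — $\rho(E)$ is bounded and the proof closes.

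The hard part is the general (higher-step) case, where $\rho(E)$ need not be bounded: a bounded $E\subset\cal S_e$ may accumulate at $e$, and the scaling above only controls the layer $V_j^*$ by $\|\xi_j\|\lesssim d(e,\sre(\xi))^{\,2-j}$, which degenerates as $\sre(\xi)\to e$ for $j\ge3$. To handle this I would build a polynomial majorant from homogeneity: from $P_k(\mu\omega)=\mu^{2Q-2n+k}P_k(\omega)$ and the identity $\Jac(\sre)(\zeta_\lambda(\mu\omega))=\mu^{2Q-2n}\Jac(\sre)(\zeta_{\mu\lambda}(\omega))$ (a consequence of~\eqref{eq64009980} exactly as in the proof of Lemma~\ref{lem6426f63f}), the uniform convergence of~\eqref{eq6492e37b} on the unit sphere yields $f_\lambda(\xi)\le C\,|\xi|^{2Q-2n+m}$ whenever $|\xi|\lambda\le\lambda_0/2$. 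Splitting $\rho(E)$ into the region $\{|\xi|\le\lambda_0/(2\lambda)\}$, where this majorant plus dominated convergence apply, and the tail $\{|\xi|>\lambda_0/(2\lambda)\}$, the remaining task is to show the tail contribution to $\int_{\rho(E)}f_\lambda$ is negligible. Through the substitution $\eta=\lambda\xi$ together with $\Jac(\sre)(\zeta_\lambda(\xi))=\lambda^{2n-2Q}\Jac(\sre)(\lambda\xi)$, this tail becomes a piece of $\vol(\sre(\lambda\rho(E)))=\vol(Z_\lambda(e,E))$ supported where $|\eta|$ is bounded below, and I would show it is of strictly lower order than $\lambda^{2Q-n+m}$ using $\vol(E)=\int_{\rho(E)}|\Jac(\sre)|<\infty$ and the fact that $\scr D$ contains no covector past its first conjugate value, so that the mass of $\rho(E)$ at infinity is thin. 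This tail estimate is where the genuine analytic work lies, and it is the content of~\cite[Theorem~D]{MR3852258}. Granting it, we obtain $\vol(Z_\lambda(e,E))\to C_E\,\lambda^{2Q-n+m}$ with $C_E\in(0,\infty)$; since the exponent $2Q-n+m$ is independent of $E$, unwinding the definition of $N_{GEO}$ identifies it as the critical value there, giving $\vol(Z_\epsilon(e,E))\sim\epsilon^{N_{GEO}}$.
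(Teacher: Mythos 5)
Your proposal cannot stand as a proof, because it is circular at the decisive step. Note first what the paper itself does: Proposition~\ref{prop64c384b7} is not proven in the paper at all --- it is imported verbatim from Agrachev--Barilari--Rizzi \cite{MR3852258} (their Theorem~D), with Remark~\ref{rem64c3884d} serving only to reconcile the two definitions of intermediate-point sets up to null sets. Your reduction is set up sensibly: formula~\eqref{eq6425debc}, the observation that $P_0\equiv\dots\equiv P_{m-1}\equiv 0$ with $m=\Kmin(G)$ so that $\lambda^{-m}|\Jac(\sre)(\zeta_\lambda(\xi))|\to |P_m(\xi)|$ for a.e.\ $\xi$, and Fatou for the lower bound are all correct. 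But at the one point where the real difficulty sits --- the tail $\{|\xi|>\lambda_0/(2\lambda)\}$ of the possibly unbounded set $\rho(E)$ --- you write that the estimate ``is the content of \cite[Theorem~D]{MR3852258}'' and proceed ``granting it''. Theorem~D \emph{is} the statement being proven; invoking it inside its own proof proves nothing. What your argument actually establishes unconditionally is only $\liminf_{\epsilon\to0}\epsilon^{-(2Q-n+m)}\vol(Z_\epsilon(e,E))>0$, and not even the finiteness of $C_E=\int_{\rho(E)}|P_m|$.

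There are two further concrete gaps in the parts you do argue. First, on the region $\{|\xi|\le\lambda_0/(2\lambda)\}$ your majorant $C|\xi|^{2Q-2n+m}$ (whose derivation from homogeneity is correct) must be \emph{integrable} over $\rho(E)$ for dominated convergence to apply, and this is unproven: $\rho(E)$ may be unbounded, and the only integrability available is $\int_{\rho(E)}|\Jac(\sre)|\dd\xi=\vol(E)<\infty$, which does not control a polynomial weight --- indeed $\int_{\rho(E)}|P_m|<\infty$ is itself part of what must be shown. Second, your step-two boundedness argument is a non sequitur: the fact that $\eta_\lambda$ fixes $V_2^*$ shows that the set of $V_2^*$-components of covectors in $\scr D$ is \emph{scale-invariant}, not that it is bounded; a genuine bound of this type requires a conjugate-time/Jacobian-vanishing argument, as in Lemma~\ref{chaD} for the groups $G_A$. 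It is worth seeing how the paper sidesteps all of this: it cites Theorem~D for arbitrary $E$, and then in the proof of Theorem~\ref{thm6401aa05} it constructs a single set $E=\sre(U)$ with $U$ \emph{compact} inside $\scr D\cap\{P_{\Kmin(G)}\neq0\}$, precisely so that the series~\eqref{eq6492e37b} converges uniformly and the interchange of limit and integral is trivial. That division of labor is what the citation buys; recovering the statement for all bounded measurable $E\subset\cal S_e$ requires the ample-geodesics machinery of \cite{MR3852258} and is not obtainable from the homogeneity of the $P_k$ alone.
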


\begin{proof}[Proof of Theorem~\ref{thm6401aa05}]
	We will give a measurable set $E\subset G$ with $0<\vol(E)<\infty$ such that 
	\begin{equation}\label{eq64958508}
	\lim_{\epsilon\to0}
		\frac{ \vol(Z_\epsilon (e,E)) }{ \epsilon^{2Q-n+\Kmin(G)} }
		\in (0,+\infty) ,
	\end{equation}
	and then we will apply Proposition~\ref{prop64c384b7} (i.e., \cite[Theorem D, page 58]{MR3852258})
	to conclude that $N_{GEO}=2Q-n+\Kmin(G)$.
	
	Since $\scr E := \{\xi : P_{\Gamma(G)}(\xi) \ne 0\}$ is open and dense in $\frk g^*$, the set $\scr D \cap \scr E$ is open and non-empty. 
	As a result, 
	we can choose $\xi_0 \in \scr D \cap \scr E$ and 
	a compact connected neighbourhood $U \subset \scr D \cap \scr E$ of $\xi_0$
	and an open interval $I\subset\R$ with $0\in I$
	such that for $\epsilon\in I$ and $\xi\in U$ we have
	\[
	\Jac(\sre)(\zeta_\epsilon(\xi))
	= \sum_{k = \Gamma(G)}^\infty P_k(\xi) \epsilon^k ,
	\]
	where $P_k:U\to\R$ are analytic functions,
	the series is absolutely convergent
	 and
	$P_{\Gamma(G)}(\xi)\neq0$ 
	for all $\xi\in U$.
	In particular, if $P_{\Gamma(G)}(\xi_0)>0$ ($P_{\Gamma(G)}(\xi_0)<0$, resp.), 
	then there is $\eta>0$ such that 
	$P_{\Gamma(G)}(\xi)>\eta$ ($P_{\Gamma(G)}(\xi)<-\eta$, resp.) 
	for all $\xi\in U$.
	
	Let assume $P_{\Gamma(G)}(\xi_0)>0$, as the other case is similar.
	Since $U$ is compact and since $(\epsilon,\xi)\mapsto\frac{ \Jac(\sre)(\zeta_\epsilon(\xi)) }{ \epsilon^{\Gamma(G)} }$
	is analytic in a neighbourhood of $\{0\}\times U$, 
	the limit
	\[
		\lim_{\epsilon\to0} \frac{ \Jac(\sre)(\zeta_\epsilon(\xi)) }{ \epsilon^{\Gamma(G)} }
		= P_{\Gamma(G)}(\xi)
	\]
	is uniform for $\xi\in U$.
	Therefore, there exists $\epsilon_0>0$ such that 
	\[
		\Jac(\sre)(\zeta_\epsilon(\xi)) > \frac{\eta}{2} \epsilon^{\Gamma(G)} > 0
	\]
	for all $\epsilon\in(0,\epsilon_0)$.
	
	Set $E := \sre(U)$.
	For $\epsilon\in(0,\epsilon_0)$,
	we obtain from Proposition~\ref{prop64255e1c}
	\begin{align*}
	\vol(Z_\epsilon(e,E))
		&= \epsilon^{2Q-n} \int_{U} |\Jac(\sre)(\zeta_\epsilon(\xi))| \dd\xi \\
		&= \epsilon^{2Q-n} \int_{U} \Jac(\sre)(\zeta_\epsilon(\xi)) \dd\xi \\
		&= \epsilon^{2Q-n+\Gamma(G)}  \sum_{k = \Gamma(G)}^\infty \epsilon^{k - \Gamma(G)} \int_{U} P_k(\xi) \dd\xi.
	\end{align*}
	Since $\int_{U} P_{\Gamma(G)}(\xi) \dd\xi > 0$ from our choice of $U$, 
	we have
	\[
	\lim_{\epsilon\to0}
		\frac{ \vol(Z_\epsilon (e,E)) }{ \epsilon^{2Q-n+\Kmin(G)} }
	= \int_{U} P_{\Gamma(G)}(\xi) \dd\xi 
	\in (0,\infty) ,
	\]
	that is,~\eqref{eq64958508}. 
\end{proof}

%%%%%%%%%%%%%%%%%%%%%%%%%%%%%%%%%%%%%%%%%%%%%%%%%%%%%%%%%%%%%%%
\section{Proof of Theorem~\ref{thm6419896b}}
\label{sec6425e252}

\begin{proposition}\label{prop6401afb9}
	On a Sard-regular Carnot group, the following statements are equivalent for every $N>0$:
	\begin{enumerate}[label=(\roman*)]
	\item\label{item64267a2a_1}
	$N\ge N_{CE}$;
	\item\label{item64267a2a_2}
	$\lambda^{2Q-n} |\Jac(\sre)(\zeta_\lambda(\xi))| \ge \lambda^N |\Jac(\sre)(\xi)|$
	for all $\xi\in\scr D$ and $\lambda\in[0,1]$;
	\item\label{item64267a2a_3}
	$\left.\frac{\dd}{\dd \lambda}\right|_{\lambda=1} \left[ \frac{\Jac(\sre)(\zeta_\lambda(\xi)) }{ \lambda^{N-2Q+n} } \right]^2 \le 0$
	for all $\xi\in\scr D$;
	\item\label{item64267a2a_4}
	$\left.\frac{\dd}{\dd \lambda}\right|_{\lambda=\lambda_0} \left[  \frac{\Jac(\sre)(\zeta_\lambda(\xi)) }{ \lambda^{N-2Q+n} } \right]^2 \le 0$
	for all $\xi\in\scr D$ and $\lambda_0\in(0,1]$.
	\end{enumerate}
\end{proposition}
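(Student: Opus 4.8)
The plan is to prove the cyclic chain $\ref{item64267a2a_1}\THEN\ref{item64267a2a_2}\THEN\ref{item64267a2a_3}\THEN\ref{item64267a2a_4}\THEN\ref{item64267a2a_1}$. Throughout I would abbreviate $\phi_\xi(\lambda):=\Jac(\sre)(\zeta_\lambda(\xi))$, so that $\phi_\xi(1)=\Jac(\sre)(\xi)$, and set $F_\xi(\lambda):=\bigl(\phi_\xi(\lambda)/\lambda^{N-2Q+n}\bigr)^2$; note that $\phi_\xi$ is analytic in $\lambda$ because $\sre$ is analytic, hence $F_\xi$ is analytic on $(0,\infty)$. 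The engine behind the equivalence of the three analytic conditions is a single scaling identity. Combining the linearity of $\zeta_\lambda$, the group law $\zeta_\mu\zeta_\lambda=\zeta_{\mu\lambda}$, and the homogeneity relation $\Jac(\sre)(\mu\,\zeta_\lambda(\xi))=\mu^{2Q-2n}\Jac(\sre)(\zeta_{\mu\lambda}(\xi))$ from~\eqref{eq64009980}, one obtains $\phi_{\mu\xi}(\lambda)=\mu^{2Q-2n}\phi_\xi(\mu\lambda)$, and therefore
\[
F_{\mu\xi}(\lambda)=\mu^{2N-2n}F_\xi(\mu\lambda),\qquad F_{\mu\xi}'(1)=\mu^{2N-2n+1}F_\xi'(\mu).
\]
Since $\xi\in\scr D$ forces $\mu\xi\in\scr D$ for $\mu\in(0,1]$, this identity is exactly what transports a statement made at $\lambda=1$ to all $\lambda_0\in(0,1]$.

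The implication $\ref{item64267a2a_2}\THEN\ref{item64267a2a_3}$ I would treat as a one-sided derivative: rewriting~\ref{item64267a2a_2} as $F_\xi(\lambda)\ge F_\xi(1)$ for $\lambda\in(0,1]$, the quotient $\frac{F_\xi(\lambda)-F_\xi(1)}{\lambda-1}$ has nonnegative numerator and negative denominator for $\lambda<1$, so letting $\lambda\to1^-$ gives $F_\xi'(1)\le0$, which is~\ref{item64267a2a_3}. For $\ref{item64267a2a_3}\THEN\ref{item64267a2a_4}$ I would apply~\ref{item64267a2a_3} at the point $\lambda_0\xi\in\scr D$ (legitimate since $\lambda_0\in(0,1]$) and read off from $F_{\lambda_0\xi}'(1)=\lambda_0^{2N-2n+1}F_\xi'(\lambda_0)$, together with $\lambda_0^{2N-2n+1}>0$, that $F_\xi'(\lambda_0)\le0$; this is precisely~\ref{item64267a2a_4}.

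For $\ref{item64267a2a_4}\THEN\ref{item64267a2a_1}$ I would first integrate. Condition~\ref{item64267a2a_4} says $F_\xi'\le0$ on $(0,1]$, so $F_\xi$ is non-increasing there and $F_\xi(\lambda)\ge F_\xi(1)$ for $\lambda\in(0,1]$; taking square roots and clearing the power of $\lambda$ recovers the pointwise bound $\lambda^{2Q-n}|\phi_\xi(\lambda)|\ge\lambda^N|\phi_\xi(1)|$ (the case $\lambda=0$ being trivial as $2Q-n>0$). Integrating this inequality over $\rho(E\cap\cal S_e)$ and using the volume formula~\eqref{eq6425debc} on the left, and the area formula together with $\vol(G\setminus\cal S_e)=0$ on the right, yields $\vol(Z_\lambda(e,E))\ge\lambda^N\vol(E)$ for every $E\in\scr F$ and every $\lambda\in[0,1]$; by the definition of $N_{CE}$ this gives $N\ge N_{CE}$.

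The remaining implication $\ref{item64267a2a_1}\THEN\ref{item64267a2a_2}$ is the heart of the matter, and I would argue it by contraposition. If~\ref{item64267a2a_2} fails there are $\xi_0\in\scr D$ and $\lambda_0\in(0,1)$ (the value $\lambda_0=1$ cannot witness a strict failure, since $Z_1(e,E)=E$) with $\lambda_0^{2Q-n}|\phi_{\xi_0}(\lambda_0)|<\lambda_0^N|\phi_{\xi_0}(1)|$, where $\phi_{\xi_0}(1)=\Jac(\sre)(\xi_0)\ne0$ because $\xi_0\in\scr D$. By continuity of $\xi\mapsto\phi_\xi(\lambda_0)$ and $\xi\mapsto\phi_\xi(1)$ there are a bounded neighbourhood $U\subset\scr D$ of $\xi_0$ and a constant $c<1$ with $\lambda_0^{2Q-n}|\phi_\xi(\lambda_0)|\le c\,\lambda_0^N|\phi_\xi(1)|$ for all $\xi\in U$. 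Setting $E:=\sre(U)\in\scr F$ and integrating as before gives $\vol(Z_{\lambda_0}(e,E))\le c\,\lambda_0^N\vol(E)<\lambda_0^N\vol(E)$, so $N$ lies outside the set $\cal A$ whose infimum is $N_{CE}$. Since $\cal A$ is upward-closed (because $\lambda^{N'}\le\lambda^N$ for $\lambda\in[0,1]$ and $N'\ge N$) and closed under limits, the relation $N\notin\cal A$ forces $N<N_{CE}$, the negation of~\ref{item64267a2a_1}. The hard part is exactly this localization step: it is where Sard-regularity enters, through $\vol(G\setminus\cal S_e)=0$, which makes~\eqref{eq6425debc} an equality and lets $\vol(E)$ be computed by the area formula on $U$, and where one must check that a pointwise strict failure survives integration and interacts correctly with the infimum defining $N_{CE}$.
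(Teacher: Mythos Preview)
Your proof is correct and follows the same overall logic as the paper's, with two execution differences worth noting. For \ref{item64267a2a_3}$\Rightarrow$\ref{item64267a2a_4}, the paper substitutes $\zeta_{\lambda_0}\xi$ and invokes Lemma~\ref{lem64268086} to ensure this lies in $\scr D$, obtaining $f_{\zeta_{\lambda_0}\xi}(\lambda)=\lambda_0^{N-2Q+n}f_\xi(\lambda_0\lambda)$; you instead substitute the scalar multiple $\lambda_0\xi$ and use the identity $\phi_{\mu\xi}(\lambda)=\mu^{2Q-2n}\phi_\xi(\mu\lambda)$ coming from~\eqref{eq64009980}, which needs only the elementary fact (stated just before Lemma~\ref{lem64268086}) that $t\xi\in\scr D$ for $t\in(0,1]$. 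Both scalings work; yours avoids the separate lemma. For \ref{item64267a2a_1}$\Rightarrow$\ref{item64267a2a_2}, the paper argues directly: from $N\ge N_{CE}$ it asserts the volume inequality, applies Proposition~\ref{prop64255e1c} to get the integral inequality over every bounded open $U\subset\scr D$, and then passes to the pointwise statement by Lebesgue differentiation and continuity. Your contrapositive route is equally valid but slightly longer, since it forces you to argue that the defining set $\cal A$ is closed and upward-closed in order to conclude $N<N_{CE}$ from $N\notin\cal A$; the paper's direct argument implicitly uses the same closedness (in asserting that $N\ge N_{CE}$ already gives the volume inequality) but does not spell it out.
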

\begin{proof}
	\ref{item64267a2a_1}$\THEN$\ref{item64267a2a_2}
	If $N\ge N_{CE}$, then 
	\[
	\vol(Z_\lambda(p,E)) \ge \lambda^N \vol(E) 
	\qquad\forall \lambda\in[0,1] ,
	\]
	for all measurable $E\subset G$ with $0<\vol(E)<\infty$.
	By Proposition~\ref{prop64255e1c}, we then have, for every $U\subset\scr D$ open and bounded,
	\[
	\lambda^{2Q-n}  \int_{U} |\Jac(\sre)(\zeta_\lambda(\xi))| \dd\xi
	\ge \lambda^N  \int_{U} |\Jac(\sre)(\xi)| \dd\xi ,
	\qquad\forall \lambda\in[0,1] .
	\]
	By the Lebesgue differentiation theorem, 
	and by the continuity in $\xi$ of both left- and right-hand integrand functions,
	\ref{item64267a2a_2} follows.
	
	\ref{item64267a2a_2}$\THEN$\ref{item64267a2a_1}
	By a direct application of Proposition~\ref{prop64255e1c},
	the pointwise estimate for $\Jac(\sre)$ implies the volume estimate of 
	the definition of $N_{CE}$.
	
	\ref{item64267a2a_2}$\THEN$\ref{item64267a2a_3}
	The inequality~\ref{item64267a2a_2} implies that the function
	\[
	f_\xi(\lambda) := \frac{\Jac(\sre)(\zeta_\lambda(\xi)) }{ \lambda^{N-2Q+n} }
	\]
	satisfies $f^2_\xi(\lambda)\ge f^2_\xi(1)$ for $\lambda\in[0,1]$.
	Since $f_\xi$ is smooth, we conclude $\left.\frac{\dd}{\dd \lambda}\right|_{\lambda=1} f_\xi^2(\lambda) \le 0$.
	
	\ref{item64267a2a_3}$\THEN$\ref{item64267a2a_4}
	Fix $\xi\in\scr D$ and $\lambda_0\in(0,1]$.
	Notice that, since $\zeta_{\lambda_0\lambda} = \zeta_{\lambda_0}\circ\zeta_\lambda$,
	the function $f_\xi(\lambda)$ defined above satisfies
	\[
	f_{\zeta_{\lambda_0}\xi}(\lambda) = \lambda_0^{N-2Q+n} f_\xi(\lambda_0\lambda)
	\]
	for all $\lambda\in(0,1]$.
	Moreover, by Lemma~\ref{lem64268086}, we have $\zeta_{\lambda_0}\xi\in\scr D$.
	Therefore, from~\ref{item64267a2a_3} we obtain
	\[
	0 
	\ge \left.\frac{\dd}{\dd \lambda}\right|_{\lambda=1} f_{\zeta_{\lambda_0}\xi}^2(\lambda)
	= \lambda_0^{2N-4Q+2n} \left.\frac{\dd}{\dd \lambda}\right|_{\lambda=1} f_\xi^2(\lambda_0\lambda)
	= \lambda_0^{2N-4Q+2n+1} \left.\frac{\dd}{\dd \lambda}\right|_{\lambda=\lambda_0} f_\xi^2(\lambda) .
	\]
	We conclude that~\ref{item64267a2a_4} holds.
	
	\ref{item64267a2a_4}$\THEN$\ref{item64267a2a_2}
	The hypothesis~\ref{item64267a2a_4} implies that $f_\xi^2$ is non increasing on $(0,1]$,
	whenever $\xi\in\scr D$. 
	It follows that  $f_\xi^2(\lambda) \ge f_\xi^2(1)$ for all $\xi\in\scr D$ and $\lambda\in(0,1]$,
	which is equivalent to~\ref{item64267a2a_2}.
\end{proof}

\begin{lemma} \label{supD}
On Sard-regular Carnot group $G$, we have 
\[
	\Kmax(G) = \sup\{\Kmin(\xi) : \xi\in\scr D\} .
\]
\end{lemma}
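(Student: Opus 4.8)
The plan is to prove the two inequalities separately, all the content residing in $\Kmax(G)\le\sup\{\Kmin(\xi):\xi\in\scr D\}$. For the easy inequality $\sup\{\Kmin(\xi):\xi\in\scr D\}\le\Kmax(G)$, I would show the inclusion $\scr D\subseteq\{\xi\in\frk g^*:\Kmin(\xi)<\infty\}$. Indeed, if $\xi\in\scr D$ then Definition~\ref{def6401a951} gives $\Jac(\sre)(t\xi)\neq0$ for all $t\in(0,1]$; combining \eqref{eq64009980} with the expansion \eqref{eq6492e37b}, for small $t>0$ one has
\[
\Jac(\sre)(t\xi)=t^{2Q-2n}\sum_{k=0}^\infty P_k(\xi)\,t^k ,
\]
so the coefficients $P_k(\xi)$ cannot all vanish, i.e. $\Kmin(\xi)<\infty$. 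Since by \eqref{eq6400a110} the quantity $\Kmax(G)$ is the supremum of $\Kmin$ over exactly this set, the inequality follows.

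For the reverse inequality I would exploit the scale invariance \eqref{homoG}, namely $\Kmin(\mu\xi)=\Kmin(\xi)$ for $\mu\neq0$. It then suffices to prove the following claim: for every $\xi\in\frk g^*$ with $\Kmin(\xi)<\infty$ there exists $t>0$ such that $t\xi\in\scr D$. Granting this, $\Kmin(t\xi)=\Kmin(\xi)$ places the value $\Kmin(\xi)$ inside $\{\Kmin(\eta):\eta\in\scr D\}$, and taking the supremum over all $\xi$ with $\Kmin(\xi)<\infty$ yields $\Kmax(G)\le\sup\{\Kmin(\eta):\eta\in\scr D\}$.

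To produce such a $t$ I would verify the two defining conditions of $\scr D$ for $t\xi$ with $t$ small. First I claim $\xi|_{V_1}\neq0$: solving the Hamiltonian system \eqref{eq63d39dbe} with an initial covector $\eta$ satisfying $\eta|_{V_1}=0$ gives $\dot g(0)=0$ and $\dot\alpha(0)=0$, so by uniqueness of solutions $g\equiv e$, whence $\sre\equiv e$ on the subspace $\bigoplus_{j\ge2}V_j^*=\{\eta:\eta|_{V_1}=0\}$; as this subspace has positive dimension when $s\ge2$, the map $\sre$ is constant along it and $\Jac(\sre)$ vanishes identically there, forcing $\Kmin=\infty$ on it (for $s=1$ the group is $\R^n$ and the statement is trivial). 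Hence $\Kmin(\xi)<\infty$ already implies $\xi|_{V_1}\neq0$, so $r\mapsto\sre(r\xi)$ is a geodesic of positive constant speed. Second, finiteness of $\Kmin(\xi)$ together with $\Jac(\sre)(r\xi)=\sum_k P_k(\xi)r^{2Q-2n+k}$, whose lowest nonzero term has order $2Q-2n+\Kmin(\xi)$, provides $\delta>0$ with $\Jac(\sre)(r\xi)\neq0$ for all $r\in(0,\delta)$; this is precisely the nonvanishing-Jacobian requirement for $t\xi$ once $t<\delta$. For the minimality-and-uniqueness requirement I would invoke the standard fact that a normal geodesic of positive speed has a strictly positive cut time $t_{\mathrm{cut}}(\xi)$, so that for $t<t_{\mathrm{cut}}(\xi)$ the curve $s\mapsto\sre(st\xi)$ is the unique constant-speed length-minimizer from $e$ to $\sre(t\xi)$. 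Choosing $0<t<\min(\delta,t_{\mathrm{cut}}(\xi))$ then gives $t\xi\in\scr D$, proving the claim.

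The main obstacle is this last input: reducing condition (b) of Definition~\ref{def6401a951} to the existence of a positive cut time for geodesics with $\xi|_{V_1}\neq0$, which I would take from the sub-Riemannian literature (e.g.\ \cite{MR3971262,MR2513150}) rather than re-derive here. Everything else is bookkeeping with the homogeneity \eqref{homoG}, the dilation identity \eqref{eq64009980}, and the analytic expansion \eqref{eq6492e37b}.
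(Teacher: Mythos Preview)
Your proposal is correct and follows essentially the same route as the paper: both directions are argued exactly as you do, reducing the hard inequality via the homogeneity $\Kmin(\mu\xi)=\Kmin(\xi)$ to showing $t\xi\in\scr D$ for small $t$, then checking the Jacobian condition from the expansion~\eqref{eq64009980}--\eqref{eq6492e37b} and invoking local minimality of short normal geodesics (the paper cites \cite[Theorem~4.65]{MR3971262} for this). Your additional verification that $\Kmin(\xi)<\infty$ forces $\xi|_{V_1}\neq0$ is a small refinement that the paper leaves implicit when appealing to local minimality.
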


\begin{proof}
Let us denote the number on the RHS by $K$. It follows from the definition of $\scr D$ that if $\xi \in \scr D$, then $\Kmin(\xi) < \infty$. Then it follows from the original definition of $\Kmax(G)$ (cf.~\eqref{eq6400a110})
that $K \le \Kmax(G)$. 

To prove the converse inequality, by \eqref{homoG}, it suffices to prove that if $\xi \in \frk g^*$ with $\Kmin(\xi) < \infty$, then for $\mu > 0$ small enough we have $\mu \xi \in \scr D$.
We check Definition~\ref{def6401a951} for $\mu \xi$ with $\mu$ small enough. Since $\Kmin(\xi) < \infty$, for $\mu$ small and $t \in (0,1]$, we have 
\[
\Jac(\sre)(\zeta_{t\mu}(\xi))
= \sum_{k=\Kmin(\xi)}^\infty P_k(\xi) (t\mu)^k  \ne 0.
\]
It follows from \eqref{eq64009980} that
\[
	\Jac(\sre)(t\mu\xi) =(t\mu)^{2Q-2n} \Jac(\sre)(\zeta_{t\mu}(\xi)) \ne 0, \quad \forall t \in (0,1].
\]
By the local minimality (cf. \cite[Theorem 4.65]{MR3971262}), for $\mu$ small enough, $t\mapsto\sre(t\mu\xi)$ is the unique constant-speed length-minimizing curve $[0,1]\to G$ from $e$ to $\sre(\mu\xi)$. 
Thus we have proven that $\mu \xi \in \scr D$ and this ends the proof of the lemma.
\end{proof}

\begin{proof}[Proof of Theorem~\ref{thm6419896b}]
	Fix $\xi\in\scr D$ and assume
	\begin{equation}\label{eq6426ef35}
		N < 2Q-n+\Gamma(\xi) .
	\end{equation}
	For $\lambda\in(0,1]$, define the analytic function
	\[
	f_\xi(\lambda) := \frac{\Jac(\sre)(\zeta_\lambda(\xi)) }{ \lambda^{N-2Q+n} } .
	\]
	By~\eqref{eq64255fe9}, there is $\lambda_\xi>0$ such that, for $\lambda\in[0,\lambda_\xi)$, 
	\[
	f_\xi(\lambda)
	= \lambda^{\Gamma(\xi)-N+2Q-n} 
		\sum_{k\ge0} P_{\Gamma(\xi)+k}(\xi) \lambda^k.
	\]
	By standard rules of calculus, we have, for $\lambda\in[0,\lambda_\xi)$,
	\[
	f'_\xi(\lambda) = \lambda^{\Gamma(\xi)-N+2Q-n-1} 
		\sum_{k\ge0} (\Gamma(\xi)+k-N+2Q-n) P_{\Gamma(\xi)+k}(\xi) \lambda^k.
	\]
	Since $\Gamma(\xi)+k-N+2Q-n > 0$ by~\eqref{eq6426ef35}, as $\lambda\to0^+$, $f_\xi(\lambda)$ and $f_\xi'(\lambda)$ have the same sign as $P_{\Gamma(\xi)}(\xi)$.

	We conclude that 
	\[
	\frac{\dd}{\dd \lambda} f^2_\xi(\lambda) = 2 f_\xi(\lambda) f'_\xi(\lambda) > 0
	\]
	for $\lambda$ positive and small enough.
	Proposition~\ref{prop6401afb9} implies that $N<N_{CE}$.
	
	Since $N$ is arbitrary in $(0,2Q-n+\Gamma(\xi))$ and $\xi$ is arbitrary in $\scr D$, we complete the proof of Theorem~\ref{thm6419896b} by Lemma~\ref{supD}.
\end{proof}

\begin{remark}
From the definition in \eqref{eq6400a110} and Lemma \ref{lem6426f63f}, $\Kmin(\xi)$ attains its minimum value $\Kmin(G)$ on an open, non-empty Zariski subset of $\frk g^*$ (see also Remark \ref{rel1} and \cite[Proposition 5.46]{MR3852258}), which implies it is constant almost everywhere. It seems that the curvature exponent $N_{CE}$ should not detect higher values of $\Kmin(\xi)$ by definition. However, (ii) of Proposition \ref{prop6401afb9} shows that the curvature exponent $N_{CE}$ provides a uniform bound for the Jacobian determinant of the sub-Riemannian exponential map while $\Kmin(\xi)$ is defined in a pointwise way. Thus when $\xi$ moves in the set $\{\xi :  P_{\Kmin(G)}(\xi) \ne 0\}$, the coefficient of the first nonzero  term in \eqref{eq64255fe9}, or equivalently $P_{\Kmin(G)}(\xi)$, may become small and create an obstacle for the uniformity. This helps to explain why the curvature exponent $N_{CE}$ could be strictly larger than the geodesic dimension $N_{GEO}$.
\end{remark}

%%%%%%%%%%%%%%%%%%%%%%%%%%%%%%%%%%%%%%%%%%%%%%%%%%%%%%%%%%%%%%%
\section{Carnot groups of step two}
\label{sec6496dd08}

The following construction of the sub-Riemannian exponential map in Carnot groups of step two,
and in particular the splitting given in Definition~\ref{def63f121f2},
is linked to the techniques used in \cite{MR3513881,MR3671588} for the study of phase function of 
Fourier integral operators on Carnot groups. 
In fact, it has been explained to the first-named author by Alessio Martini.

\subsection{Preliminary observations on Carnot groups of step 2}
Let $\frk g = V_1\oplus V_2$ be a stratified Lie algebra of step 2,
with a scalar product $\langle \cdot,\cdot \rangle$ on $V_1$.

If $\mu\in V_2^*$, let $J_\mu:V_1\to V_1$ be the linear map 
defined by 
\[
\langle J_\mu v,w \rangle = \langle \mu | [v,w] \rangle
\]
for all $v,w\in V_1$.
Notice that $J_\mu$ is skew-symmetric, that is, 
\[
	J_\mu\in\so(V_1,\langle \cdot,\cdot \rangle) 
	= \{J:V_1\to V_1\text{ linear, }\langle Jx,y \rangle = - \langle x,Jy \rangle,\ \forall x,y\in V_1\} .
\]

We will fix a scalar product $\langle \cdot,\cdot \rangle$ on the whole $\frk g$ 
that extends the given one on $V_1$ and such that $V_1$ and $V_2$ are orthogonal.
We will use this scalar product to identify $\frk g$ with $\frk g^*$, via $v\mapsto v^\flat$.
For instance, if $u\in V_2$, we have $J_u:=J_{u^\flat}$.
It follows that, for all $v,w\in V_1$ and $u\in V_2$,
\begin{equation}\label{eq64c3d1c5}
	\langle u, [v,w] \rangle 
	= \langle u^\flat | [v,w] \rangle 
	= \langle J_{u^\flat} v,w \rangle  
	= \langle J_u v,w \rangle  .
\end{equation}

There is a special choice of scalar product on $V_2$:
we keep this result for completeness.

\begin{lemma}
	Denote by $\langle \cdot,\cdot \rangle_{HS}$ the Hilbert--Schmidt scalar product in $\so(V_1,\langle \cdot,\cdot \rangle)$, that is,
	\[
		\langle A,B \rangle_{HS} = \trace(AB^*) ,
	\]
	where $B^*$ is the conjugate of $B$ with respect to the scalar product $\langle \cdot,\cdot\rangle$ on $V_1$.

	There exists a unique extension of $\langle \cdot,\cdot \rangle$ from $V_1$ to a scalar product on $\frk g$ such that $V_1$ and $V_2$ are orthogonal and,
	for every $v,w\in V_2$, we have 
	\begin{equation}\label{eq63cfd3ea}
	\langle v,w \rangle = \langle J_{v^\flat},J_{w^\flat} \rangle_{HS} ,
	\end{equation}
	where $V_2\ni x\mapsto x^\flat\in V_2^*$ is the linear isomorphism induced by the scalar product.
\end{lemma}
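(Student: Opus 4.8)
The plan is to reduce the apparently circular defining relation---circular because the flat map $x\mapsto x^\flat$ on $V_2$ depends on the very scalar product we are constructing---to a statement about dual scalar products, which will yield existence and uniqueness simultaneously.

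First I would record the one structural input: the linear map $\Phi:V_2^*\to\so(V_1,\langle\cdot,\cdot\rangle)$, $\mu\mapsto J_\mu$, is injective. Indeed, if $J_\mu=0$ then $\langle\mu|[v,w]\rangle=\langle J_\mu v,w\rangle=0$ for all $v,w\in V_1$, and since the stratification gives $[V_1,V_1]=V_2$, this forces $\mu$ to vanish on $V_2$, i.e.\ $\mu=0$. Consequently the symmetric bilinear form
\[
B(\mu,\nu):=\langle J_\mu,J_\nu\rangle_{HS}=\trace(J_\mu J_\nu^*),\qquad \mu,\nu\in V_2^*,
\]
is positive definite: $B(\mu,\mu)=\trace(J_\mu J_\mu^*)\ge0$ with equality only when $J_\mu=0$, hence only when $\mu=0$. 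Thus $B$ is a genuine scalar product on the dual space $V_2^*$.

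Next I would resolve the circularity. Any scalar product $\langle\cdot,\cdot\rangle$ on $V_2$ induces a flat isomorphism $\flat:V_2\to V_2^*$ together with a dual scalar product $\langle\cdot,\cdot\rangle^*$ on $V_2^*$, characterized by the fact that $\flat$ is an isometry, i.e.\ $\langle v,w\rangle=\langle v^\flat,w^\flat\rangle^*$ for all $v,w\in V_2$. The identity~\eqref{eq63cfd3ea} that we must satisfy reads $\langle v,w\rangle=B(v^\flat,w^\flat)$. Combining these two relations and using that $\flat$ is surjective onto $V_2^*$, the requirement~\eqref{eq63cfd3ea} is seen to be equivalent to the equality $\langle\cdot,\cdot\rangle^*=B$ of bilinear forms on $V_2^*$.

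Finally, since $B$ is a fixed scalar product on $V_2^*$, there is exactly one scalar product on $V_2$ whose dual equals $B$, namely the scalar product dual to $B$ under the canonical identification $V_2^{**}\cong V_2$. This furnishes both existence and uniqueness of the scalar product on $V_2$ satisfying~\eqref{eq63cfd3ea}; extending it by declaring $V_1\perp V_2$ and retaining the given product on $V_1$ produces the desired scalar product on $\frk g$. The only genuine obstacle is the conceptual one of recognizing that~\eqref{eq63cfd3ea} is not a fixed-point equation in the unknown scalar product but, after passing to duals, the direct identification $\langle\cdot,\cdot\rangle^*=B$; once this is seen, the remainder is the elementary correspondence between a scalar product and its dual.
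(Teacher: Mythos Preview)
Your proof is correct and follows essentially the same approach as the paper: both observe that $\mu\mapsto J_\mu$ is injective (since $V_2=[V_1,V_1]$), pull back the Hilbert--Schmidt product to a scalar product $B$ on $V_2^*$, and then invoke the bijection between scalar products on $V_2$ and on $V_2^*$ to see that~\eqref{eq63cfd3ea} is equivalent to $\langle\cdot,\cdot\rangle^*=B$, giving existence and uniqueness at once. Your discussion of the apparent circularity is a helpful clarification, but the underlying argument matches the paper's.
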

\begin{proof}
	By definition and the fact $V_2=[V_1,V_1]$, 
	it is direct to check that the map $V_2^* \to \so(V_1,\langle \cdot,\cdot \rangle)$ defined by $\mu \mapsto J_\mu$ is linear and injective. 
	Then, there is a unique scalar product $\langle \cdot,\cdot \rangle^*$ on $V_2^*$ such that for every $\alpha,\beta \in V_2^*$,
	\begin{equation}\label{eq649442da}
	\langle \alpha,\beta \rangle^* = \langle J_\alpha, J_\beta \rangle_{HS}.
	\end{equation}
	The scalar product $\langle \cdot,\cdot \rangle^*$ on $V_2^*$ induces a scalar product $\langle \cdot,\cdot \rangle$ on $V_2$:
	combining this with the original $\langle \cdot,\cdot \rangle$ on $V_1$, 
	we obtain a scalar product on the whole $\frk g$ 
	such that $V_1$ and $V_2$ are orthogonal and~\eqref{eq63cfd3ea} holds by definition.
	
	Uniqueness is ensured because~\eqref{eq63cfd3ea}
	is equivalent to~\eqref{eq649442da}, which in turn
	uniquely determines the dual scalar product $\langle \cdot,\cdot \rangle^*$ on $V_2^*$, and thus on $V_2$.
\end{proof}

%%%%%%%%%%%%%%%%%%%%%%%%%%%%%%%%%%%%%%%%%%%%%%%%%%%%%%%%%%%%%%%
\subsection{The sub-Riemannian exponential map}%\label{sec12131732}

To integrate the first of the two equations in~\eqref{eq63d39dbe},
we will give the following construction of $G$:
First, we denote by $V=V_1\oplus V_2$ the vector space underlying the Lie algebra $\frk g$;
second, we define the Lie group $G$ as the smooth manifold $V$ endowed with the group operation
\[
a*b = a+b+\frac12[a,b] ,
\qquad a,b\in\frk g.
\]
It follows that $0$ is the identity element $e$, and $g^{-1}=-g$.
The advantage of this construction is that we will take derivatives as we do in the vector space $V$.
For instance, the differential at $0=e$ of the left translation $L_g:V\to V$ is the linear map $DL_g|_0:V\to V$,
\[
DL_g|_0[x] = x+\frac12[g,x] .
\]

\begin{lemma}\label{lem63d454ee}
	The ODE~\eqref{eq63d39dbe} is equivalent to
	\begin{equation}\label{eq63d455ac}
	\begin{cases}
		\dot x &= \xi , \\
		\dot u &= \frac12 [ x , \xi ] ,\\
		\dot \xi &= J_\mu \xi ,\\
		\dot \mu &= 0 ,
	\end{cases}
	\end{equation}
	for curves $((x,u),(\xi,\mu)) : I \to (V_1\oplus V_2)\oplus (V_1\oplus V_2)$.
	In other words, $((x,u),(\xi,\mu))$ is a solution to~\eqref{eq63d455ac} if and only if $((x,u),(\xi+\mu)^\flat)$ is solution to~\eqref{eq63d39dbe}.
\end{lemma}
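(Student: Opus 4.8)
The plan is to verify the equivalence by direct substitution, translating the intrinsic ODE~\eqref{eq63d39dbe} into explicit coordinates on $V = V_1 \oplus V_2$ using the step-two group structure and the identification $\frk g \cong \frk g^*$ via $v \mapsto v^\flat$. I would write $g = (x,u) \in V_1 \oplus V_2$ and $\alpha = (\xi + \mu)^\flat$ with $\xi \in V_1$, $\mu \in V_2$, and then compute each side of~\eqref{eq63d39dbe} termwise. The two equations of~\eqref{eq63d39dbe} govern $DL_{g}^{-1}\dot g$ and $\dot\alpha$ respectively, so the task splits into analyzing the $g$-equation (which will produce the equations for $\dot x$ and $\dot u$) and the $\alpha$-equation (which will produce those for $\dot\xi$ and $\dot\mu$).

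For the first equation, I would use the explicit formula $DL_g|_0[z] = z + \tfrac12[g,z]$ derived just above, invert it, and apply it to $\dot g = (\dot x, \dot u)$. The right-hand side $(\alpha|_{V_1})^\sharp$ should simplify to $\xi$ since $\alpha|_{V_1} = \xi^\flat$ and $(\xi^\flat)^\sharp = \xi$. Matching the $V_1$-component gives $\dot x = \xi$, and the $V_2$-component, after accounting for the bracket term $\tfrac12[g, \dot g]$ and using that brackets land in $V_2$ while $V_2$ is central in step two, should yield $\dot u = \tfrac12[x,\xi]$. The key simplifications here are that $[g, \cdot]$ only sees the $V_1$-part of $g$ (since $[V_2, \cdot] = 0$), and that the $V_1$-component of $DL_g^{-1}\dot g$ is just $\dot x$.

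For the second equation $\dot\alpha = \alpha \circ \ad_{(\alpha|_{V_1})^\sharp}$, I would compute the right-hand side applied to test vectors. Writing $(\alpha|_{V_1})^\sharp = \xi$, the operator $\ad_\xi$ sends $V_1 \to V_2$ by $w \mapsto [\xi, w]$ and annihilates $V_2$. Pairing $\alpha = (\xi+\mu)^\flat$ with $\ad_\xi$ and using the defining relation~\eqref{eq64c3d1c5}, $\langle u, [v,w]\rangle = \langle J_u v, w\rangle$, should convert the bracket into the operator $J_\mu$. This is the step where the skew-symmetric map $J_\mu$ enters: the $V_1$-component of $\dot\alpha$ becomes $(J_\mu \xi)^\flat$, giving $\dot\xi = J_\mu\xi$, while the $V_2$-component vanishes because $\ad_\xi$ kills $V_2$, giving $\dot\mu = 0$.

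\textbf{The main obstacle} I anticipate is bookkeeping the $\flat$/$\sharp$ identifications and the pairing conventions consistently, especially in the second equation where $\alpha \circ \ad_\xi$ is a composition of a covector with a linear map and must be re-identified as an element of $\frk g^*$ before splitting into $V_1^*$ and $V_2^*$ parts. The translation between the abstract pairing $\langle \cdot | \cdot\rangle$ and the scalar product $\langle \cdot,\cdot\rangle$ through~\eqref{eq64c3d1c5} is the conceptual heart, but it is a routine (if delicate) verification rather than a genuine difficulty; once the dictionary is fixed, each of the four equations in~\eqref{eq63d455ac} drops out by matching components, and the stated biconditional follows immediately.
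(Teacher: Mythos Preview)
Your proposal is correct and follows essentially the same approach as the paper's proof: split $g=(x,u)$, $\alpha=(\xi+\mu)^\flat$, use $DL_g|_0[z]=z+\tfrac12[g,z]$ to match the first equation of~\eqref{eq63d39dbe} with $\dot x=\xi$ and $\dot u=\tfrac12[x,\xi]$, and then pair $\alpha\circ\ad_\xi$ against test vectors via~\eqref{eq64c3d1c5} to obtain $\dot\xi=J_\mu\xi$ and $\dot\mu=0$. The only cosmetic difference is that the paper applies $DL_g$ to the right-hand side rather than $DL_g^{-1}$ to $\dot g$, but in step two these are interchangeable since $DL_g^{-1}[z]=z-\tfrac12[g,z]$.
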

\begin{proof}
	If $g=x+u\in G$ with $x\in V_1$ and $u\in V_2$, 
	and if $\alpha = (\xi+\mu)^\flat = \xi^\flat+\mu^\flat\in\frk g^*$ 
	with $\xi\in V_1$ and $\mu\in V_2$,
	then 
	$(\alpha|_{V_1})^\sharp = \xi$ and
	\[
	DL_g|_e[(\alpha|_{V_1})^\sharp] 
	= \xi + \frac12 [ x , \xi ] .
	\]
	Thus, we have that the first two equations in~\eqref{eq63d455ac} are equivalent to the first equation in~\eqref{eq63d39dbe}.
	
	Next, 
	if $v=v_1+v_2$ with $v_j\in V_j$, then
	\[
	\langle \alpha\circ\ad_{(\alpha|_{V_1})^\sharp} | v \rangle
	= \langle \alpha\circ\ad_\xi | v \rangle
	= \langle \alpha | [\xi,v] \rangle
	= \langle \mu,[\xi,v_1] \rangle
	= \langle J_\mu\xi, v_1 \rangle ,
	\]
	that is, $\alpha\circ\ad_{(\alpha|_{V_1})^\sharp} = (J_\mu\xi)^\flat$.	
	It follows that
	the second two equations in~\eqref{eq63d455ac} are equivalent to the second equation in~\eqref{eq63d39dbe}.
\end{proof}

\begin{proposition}\label{prop63ee3157}
	Given $\xi_0\in V_1$ and $\mu_0\in V_2$, the analytic solution 
	$((x,u),(\xi,\mu)) : \R \to (V_1\oplus V_2)\oplus (V_1\oplus V_2)$
	to~\eqref{eq63d455ac} with 
	$x(0)=0$, $u(0)=0$, $\xi(0)=\xi_0$ and $\mu(0)=\mu_0$ is
	\begin{align*}
	x(t) &= \frac{ e^{tJ_{\mu_0}}-\Id }{ J_{\mu_0} }\xi_0 , \\
	u(t) &= t^3 \sum_{k=1}^\infty B_k(\xi_0,\mu_0) t^{k-1} , \\
	\xi(t) &= e^{tJ_{\mu_0}}\xi_0 , \\
	\mu(t) &= \mu_0 ,
	\end{align*}
	where
	\begin{equation}\label{eq63ee2a7c}
	B_k(\xi_0,\mu_0) := \sum_{m=0}^k \frac{[J_{\mu_0}^m\xi_0,J_{\mu_0}^{k-m}\xi_0]}{2(m+1)!(k-m)!(k+2)} .
	\end{equation}
\end{proposition}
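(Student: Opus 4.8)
The plan is to integrate the system~\eqref{eq63d455ac} equation by equation, exploiting its triangular structure. The last equation $\dot\mu=0$ immediately gives $\mu(t)=\mu_0$, which means the operator $J_\mu=J_{\mu_0}$ appearing in the third equation is \emph{constant} in $t$. Thus $\dot\xi=J_{\mu_0}\xi$ is a linear ODE with constant coefficients, and the unique solution with $\xi(0)=\xi_0$ is $\xi(t)=e^{tJ_{\mu_0}}\xi_0$. This is the crucial simplification: once $\mu$ is constant, everything downstream becomes an explicit integration.

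\medskip
Next I would integrate $\dot x=\xi$ with $x(0)=0$, giving
\[
x(t)=\int_0^t e^{sJ_{\mu_0}}\xi_0\dd s
= \left(\frac{e^{tJ_{\mu_0}}-\Id}{J_{\mu_0}}\right)\xi_0,
\]
where the expression $\frac{e^{tJ_{\mu_0}}-\Id}{J_{\mu_0}}$ is understood as the operator $\sum_{k\ge0}\frac{t^{k+1}}{(k+1)!}J_{\mu_0}^k$ (so no invertibility of $J_{\mu_0}$ is needed; the apparent division is only formal notation for this entire series). Finally, $u(t)=\frac12\int_0^t[x(s),\xi(s)]\dd s$ with $u(0)=0$. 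The remaining task is to expand this last integral and reorganize it into the claimed triple series $u(t)=t^3\sum_{k\ge1}B_k(\mu_0,\xi_0)t^{k-1}$ with $B_k$ given by~\eqref{eq63ee2a7c}.

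\medskip
The only real computation is this last bracket integral, so I would carry it out by substituting the power-series forms of $x$ and $\xi$. Writing $x(s)=\sum_{p\ge0}\frac{s^{p+1}}{(p+1)!}J_{\mu_0}^p\xi_0$ and $\xi(s)=\sum_{q\ge0}\frac{s^q}{q!}J_{\mu_0}^q\xi_0$, bilinearity of the bracket gives
\[
\tfrac12[x(s),\xi(s)]
= \tfrac12\sum_{p,q\ge0}\frac{s^{p+q+1}}{(p+1)!\,q!}\,[J_{\mu_0}^p\xi_0,J_{\mu_0}^q\xi_0].
\]
Integrating in $s$ from $0$ to $t$ raises each power to $s^{p+q+2}$ and divides by $(p+q+2)$. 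I would then collect terms of fixed total degree by setting $k=p+q$ and renaming $m=p$, so the coefficient of $t^{k+2}=t^3\,t^{k-1}$ is exactly $B_k(\mu_0,\xi_0)=\sum_{m=0}^k\frac{[J_{\mu_0}^m\xi_0,J_{\mu_0}^{k-m}\xi_0]}{2(m+1)!(k-m)!(k+2)}$, matching~\eqref{eq63ee2a7c}; the $k=0$ term vanishes because $[\xi_0,\xi_0]=0$, which is why the sum starts at $k=1$. The main (and only) obstacle is this bookkeeping of indices in the double-sum-to-single-sum regrouping; analyticity and convergence are automatic since everything is entire in $t$, and verifying that the proposed curve solves~\eqref{eq63d455ac} with the given initial data can alternatively be done by direct differentiation to sidestep the uniqueness argument.
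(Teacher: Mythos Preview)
Your proposal is correct and follows essentially the same route as the paper: integrate the triangular system in the order $\mu,\xi,x,u$, then expand $[x(s),\xi(s)]$ as a Cauchy product of the two exponential series and regroup by total degree $k=p+q$. The only minor difference is that the paper spells out an explicit bound $\|B_k\|\le C\|J_{\mu_0}\|^k\|\xi_0\|^2(2^{k+1}-1)/(2(k+2)!)$ to justify absolute convergence of the $u$-series, whereas you appeal to the fact that products and primitives of entire functions are entire; both arguments are valid.
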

\begin{proof}
	Since $\dot\mu=0$, then $\mu(t) = \mu_0$.
	Since $\dot\xi = J_{\mu_0} \xi$, then $\xi(t) = e^{tJ_{\mu_0}}\xi_0$.
	Since $\dot x = \xi$ and $x(0)=0$, then
	\[
	x(t) = \frac{ e^{tJ_{\mu_0}}-\Id }{ J_{\mu_0} }\xi_0
	= \sum_{k=0}^\infty \frac{t^{k+1}}{(k+1)!} J_{\mu_0}^k\xi_0 .
	\]
	Finally, since $\dot u = \frac12 [ x , \xi ]$ and $u(0)=0$, then
	\begin{align*}
	u(t)
	&= \int_0^t\dot u(s) \dd s
	= \int_0^t \frac{[x(s),\xi(s)]}{2} \dd s \\
	&= \int_0^t \frac12 \left[ 
		\sum_{a=0}^\infty \frac{s^{a+1} J_{\mu_0}^a}{(a+1)!} \xi_0 , 
		\sum_{b=0}^\infty \frac{s^bJ_{\mu_0}^b}{b!} \xi_0 
		\right] \dd s \\
	&=  \sum_{a=0}^\infty\sum_{b=0}^\infty \frac12
		\left[ \frac{ J_{\mu_0}^a}{(a+1)!} \xi_0 , 
			\frac{J_{\mu_0}^b}{b!} \xi_0 \right] 
		\int_0^t s^{a+b+1}\dd s \\
	&= \sum_{a=0}^\infty\sum_{b=0}^\infty \frac{[J_{\mu_0}^a\xi_0,J_{\mu_0}^b\xi_0]}{2(a+1)!b!(a+b+2)} t^{a+b+2}\\
	&= t^2 \sum_{k=0}^\infty \left(\sum_{m=0}^k \frac{[J_{\mu_0}^m\xi_0,J_{\mu_0}^{k-m}\xi_0]}{2(m+1)!(k-m)!(k+2)} \right) t^{k}
	\end{align*}
	Using $B_k$ as defined in~\eqref{eq63ee2a7c}, notice that $B_0 = 0$, thus $u(t) = t^3 \sum_{k=1}^\infty B_k t^{k-1}$.
	
	We claim that the series defining $u$ is absolutely convergent for all $t\in\R$.
	Indeed, if $\|\cdot\|$ is any norm on $V$ and
	$C>0$ is so that $\|[x,y]\|\le C\|x\|\cdot\|y\|$ for all $x,y\in V_1$, 
	then
	\begin{align*}
	\|B_k\| 
	&\le  \frac{C \|J_{\mu_0}\|^k \|\xi_0\|^2}{2(k+2)} \sum_{m=0}^k \frac{1}{(m+1)!(k-m)!} \\
	&=  \frac{C \|J_{\mu_0}\|^k \|\xi_0\|^2}{2(k+2)(k+1)!} \sum_{m=0}^k \binom{k+1}{m+1} \\
	&=  \frac{C \|J_{\mu_0}\|^k \|\xi_0\|^2}{2(k+2)!} \left( \sum_{m=0}^{k+1} \binom{k+1}{m} - 1\right) \\
	&=  \frac{C \|J_{\mu_0}\|^k \|\xi_0\|^2}{2(k+2)!} \left( (1+1)^{k+1} - 1\right) \\
	&=  \frac{C \|J_{\mu_0}\|^k \|\xi_0\|^2(2^{k+1}-1)}{2(k+2)!} .
	\end{align*}
	Therefore, the series defining $u$ is absolutely convergent for all $t\in\R$.
\end{proof}

%%%%%%%%%%%%%%%%%%%%%%%%%%%%%%%%%%%%%%%%%%%%%%%%%%%%%%%%%%%%%%%
\begin{theorem}\label{thm63d04190}
	The analytic function $\sre:\frk g\to\frk g$ defined by
	\[
	\sre(\xi,\mu) 
	= \left( \sum_{k=0}^\infty \frac{J_\mu^k}{(k+1)!}\xi , \sum_{k=1}^\infty B_k(\xi,\mu) \right) 
	\]
	with
	\[
	 B_k(\xi,\mu) := \frac1{2(k+2)} \sum_{m=0}^k \frac{[J_\mu^m\xi,J_\mu^{k-m}\xi]}{(m+1)!(k-m)!} 
	\]
	is the sub-Riemannian exponential map, 
	after the identification $\frk g\simeq\frk g^* = T^*_{e}G$ via the scalar product 
	that extends the given one on $V_1$ and such that $V_1$ and $V_2$ are orthogonal,
	and after the identification $\frk g\simeq G$ via the group exponential map.
\end{theorem}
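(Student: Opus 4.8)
The goal is to identify the explicit series formula for $\sre(\xi,\mu)$ with the sub-Riemannian exponential map. The strategy is to reduce everything to Proposition~\ref{prop63ee3157}, which already solves the Hamiltonian ODE~\eqref{eq63d455ac} in the step-two setting, and then to unwind the two identifications mentioned in the statement. First I would recall from Lemma~\ref{lem63d454ee} that the Hamiltonian system~\eqref{eq63d39dbe} is equivalent, under the scalar-product identification $\frk g\simeq\frk g^*$ sending $v\mapsto v^\flat$, to the system~\eqref{eq63d455ac} for curves $((x,u),(\xi,\mu))$. Thus a covector $\alpha\in\frk g^*$ with $\alpha=(\xi_0+\mu_0)^\flat$, $\xi_0\in V_1$, $\mu_0\in V_2$, corresponds to the initial data $\xi(0)=\xi_0$, $\mu(0)=\mu_0$ for~\eqref{eq63d455ac}.

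\emph{Carrying out the identification.}
By definition $\sre(\alpha)=g(1)$, the endpoint of the solution to~\eqref{eq63d39dbe} with $g(0)=e$ and $\alpha(0)=\alpha$. Under $\frk g\simeq G$ via the group exponential (which in the chosen coordinates $a*b=a+b+\frac12[a,b]$ is the identity, since $0=e$ and the manifold underlying $G$ is just $V$), the endpoint $g(1)$ is exactly $(x(1),u(1))\in V_1\oplus V_2$. Hence I would simply evaluate the formulas of Proposition~\ref{prop63ee3157} at $t=1$. Setting $t=1$ in $x(t)=\frac{e^{tJ_{\mu_0}}-\Id}{J_{\mu_0}}\xi_0$ gives the first component
\[
x(1)=\frac{e^{J_{\mu_0}}-\Id}{J_{\mu_0}}\xi_0=\sum_{k=0}^\infty\frac{J_{\mu_0}^k}{(k+1)!}\xi_0,
\]
which matches the stated first coordinate. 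Setting $t=1$ in $u(t)=t^3\sum_{k=1}^\infty B_k(\mu_0,\xi_0)t^{k-1}$ gives $u(1)=\sum_{k=1}^\infty B_k(\mu_0,\xi_0)$, matching the second coordinate, with $B_k$ as in~\eqref{eq63ee2a7c}. Relabelling the initial data $(\xi_0,\mu_0)$ as $(\xi,\mu)$ yields precisely the asserted formula for $\sre(\xi,\mu)$.

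\emph{Analyticity and the main point to verify.}
Analyticity of $\sre$ on all of $\frk g$ is already guaranteed abstractly in Section~\ref{sec64dc6371}; alternatively it follows here from the absolute convergence of both series for every $(\xi,\mu)$, which was established for $u$ in Proposition~\ref{prop63ee3157} and is immediate for $x$ since $\sum J_\mu^k/(k+1)!$ converges like the exponential. The one genuinely substantive step — the main obstacle, though it is really bookkeeping rather than a deep difficulty — is checking that the two identifications are applied consistently: that the flat map on initial covectors, the translation of~\eqref{eq63d39dbe} into~\eqref{eq63d455ac}, and the coordinate realization $\exp:\frk g\to G$ all compose to give $\sre(\alpha)=(x(1),u(1))$ with no stray factors of $\frac12$ or sign errors in the $V_2$-component. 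I would therefore state explicitly that under these identifications $\sre((\xi+\mu)^\flat)=(x(1),u(1))$ and conclude by invoking Proposition~\ref{prop63ee3157}, so that the theorem is essentially the specialization of that proposition to $t=1$.
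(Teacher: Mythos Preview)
Your proposal is correct and follows exactly the paper's approach: the paper's proof is the single sentence ``This is a direct consequence of Proposition~\ref{prop63ee3157},'' and your write-up simply unpacks that consequence by evaluating the solution at $t=1$ and tracking the identifications from Lemma~\ref{lem63d454ee}. There is nothing to add or correct.
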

\begin{proof}
	This is a direct consequence of Proposition~\ref{prop63ee3157}.
\end{proof}

%%%%%%%%%%%%%%%%%%%%%%%%%%%%%%%%%%%%%%%%%%%%%%%%%%%%%%%%%%%%%%%
\subsection{The differential of the Exponential map}
\begin{theorem}\label{thm642a9e5d}
	The differential of the function $\sre$
	from Theorem~\ref{thm63d04190}
	at a point $(\xi,\mu)\in V_1\oplus V_2$ in the direction $(v,\nu)\in V_1\oplus V_2$, that is $D\sre(\xi,\mu)[v,\nu]$, is
	\begin{equation*}\label{eq63fef7fa}
	\begin{pmatrix}
	\sum_{k=0}^\infty\frac1{(k+1)!}\left(
		J_\mu^kv + 
		\sum_{m=1}^k J_\mu^{m-1} J_{\nu} J_\mu^{k-m} \xi \right) \\
	\sum_{k=1}^\infty \frac1{2(k+2)} \sum_{m=0}^k \frac{k-2m}{(m+1)!(k-m+1)!}
		\left[
			J_\mu^mv +
			\sum_{j=1}^m J_\mu^{j-1}J_{\nu} J_\mu^{m-j}\xi
		,J_\mu^{k-m}\xi
		\right] 
	\end{pmatrix}
	,
	\end{equation*}
	where we use the conventions $\sum_{j=1}^0 = 0$ and $J_0^0=\Id$.
\end{theorem}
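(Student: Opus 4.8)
The plan is to differentiate the two components of $\sre$ from Theorem~\ref{thm63d04190} separately, treating $\mu\mapsto J_\mu$ as the basic building block. The single fact that drives everything is that $\mu\mapsto J_\mu$ is linear, so that for the analytic family $t\mapsto J_{\mu+t\nu}=J_\mu+tJ_\nu$ the Leibniz rule gives
\[
\left.\frac{\dd}{\dd t}\right|_{t=0} J_{\mu+t\nu}^k = \sum_{m=1}^k J_\mu^{m-1}J_\nu J_\mu^{k-m} ,
\]
with the convention $\sum_{m=1}^0=0$. I would record this as a preliminary computation, together with its consequence for the vector-valued maps $(\xi,\mu)\mapsto J_\mu^k\xi$, namely
\[
D(J_\mu^k\xi)[v,\nu] = J_\mu^k v + \sum_{m=1}^k J_\mu^{m-1}J_\nu J_\mu^{k-m}\xi .
\]

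With this in hand the first component is immediate: applying the displayed formula termwise to $\sum_{k\ge0}\frac{1}{(k+1)!}J_\mu^k\xi$ reproduces exactly the top entry of the claimed matrix. The second component is where the real work happens. Writing $P_m := D(J_\mu^m\xi)[v,\nu]$ for brevity, bilinearity of the bracket gives
\[
D[J_\mu^m\xi, J_\mu^{k-m}\xi][v,\nu] = [P_m, J_\mu^{k-m}\xi] + [J_\mu^m\xi, P_{k-m}] ,
\]
so that differentiating $B_k$ termwise produces two families of brackets, each with coefficient $\tfrac{1}{(m+1)!(k-m)!}$.

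The crux is to collapse these two families into the single bracket appearing in the statement. I would do this by reindexing the second family via $m\mapsto k-m$ and using the antisymmetry $[J_\mu^m\xi,P_{k-m}]=-[P_{k-m},J_\mu^m\xi]$; after the substitution both families become sums of $[P_m,J_\mu^{k-m}\xi]$, now carrying coefficients $\tfrac{1}{(m+1)!(k-m)!}$ and $-\tfrac{1}{(k-m+1)!\,m!}$ respectively. Putting these over the common denominator $(m+1)!(k-m+1)!$ turns their difference into
\[
\frac{(k-m+1)-(m+1)}{(m+1)!(k-m+1)!} = \frac{k-2m}{(m+1)!(k-m+1)!} ,
\]
which is exactly the weight in the bottom entry, including the shift from $(k-m)!$ to $(k-m+1)!$. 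Substituting the explicit value of $P_m$ recovers the claimed inner sum over $j$.

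Finally, I would justify the termwise differentiation. On bounded sets of $(\xi,\mu,v,\nu)$ the series defining $\sre$ and all the differentiated series are dominated by convergent numerical series of the same factorial type estimated in the proof of Proposition~\ref{prop63ee3157}, using $\|[x,y]\|\le C\|x\|\,\|y\|$ on $V_1$ together with $\|J_\mu\|\le C'\|\mu\|$; hence differentiation under the summation sign is legitimate. I expect the main obstacle to be purely the bookkeeping in the second component — keeping the outer index $m$ and the inner index $j$ inside each $P_m$ straight through the reindexing — rather than any conceptual difficulty. The antisymmetry-plus-common-denominator step is what produces the slightly surprising $(k-2m)$ factor, and it is the one point where I would double-check the index substitution carefully.
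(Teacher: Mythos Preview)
Your proposal is correct and follows essentially the same approach as the paper: both differentiate the series for $x$ and $u$ termwise and obtain the $(k-2m)$ factor by reindexing one of the two bracket families via $m\mapsto k-m$ together with antisymmetry, then combining over the common denominator $(m+1)!(k-m+1)!$. The only cosmetic difference is that the paper computes the $\xi$- and $\mu$-partial derivatives of $B_k$ separately before adding, whereas you package them into a single $P_m=D(J_\mu^m\xi)[v,\nu]$ and perform the reindexing once; your justification of termwise differentiation is in fact more explicit than the paper's.
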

\begin{proof}
	Using the notation of Proposition~\ref{prop63ee3157},
	we write $\sre(\xi,\mu) = (x(\xi,\mu),u(\xi,\mu))$.
	The derivative $\frac{\de x}{\de\xi}(\xi,\mu):V_1\to V_1$ is the linear map
	\[
		\frac{\de x}{\de\xi}(\xi,\mu) = \sum_{k=0}^\infty \frac{J_\mu^k}{(k+1)!} .
	\]
	An elementary computation shows that the derivative $\frac{\de x}{\de \mu}$ at $(\xi,\mu)$ is the linear map $V_2\to V_1$ 
	\[
	\frac{\de x}{\de \mu}(\xi,\mu) : \nu \mapsto \sum_{k=1}^\infty \frac1{(k+1)!} \left(\sum_{m=1}^k J_\mu^{m-1} J_\nu J_\mu^{k-m} \right)\xi  .
	\]	

	The derivatives of the second component of $\sre$ are
	\[
	\frac{\de u}{\de\xi}(\xi,\mu) = \sum_{k=1}^\infty \frac{\de }{\de\xi}B_k(\xi,\mu),
	\qquad
	\frac{\de u}{\de\mu}(\xi,\mu) = \sum_{k=1}^\infty \frac{\de }{\de\mu}B_k(\xi,\mu) .
	\]
	Since $B_k(\xi,\mu)$ is bilinear in $\xi$,
	the derivative $\frac{\de }{\de\xi}B_k(\xi,\mu)$ is the linear map $V_1\to V_2$ that maps $v\in V_1$ to
	\begin{align*}
	\frac{\de }{\de\xi}B_k(\xi,\mu)[v]
	&= \sum_{m=0}^k \frac{[J_\mu^mv,J_\mu^{k-m}\xi]+[J_\mu^m\xi,J_\mu^{k-m}v]}{2(m+1)!(k-m)!(k+2)} \\
	&= \frac1{2(k+2)} \sum_{m=0}^k \frac{k-2m}{(m+1)!(k-m+1)!}[J_\mu^mv,J_\mu^{k-m}\xi] .
	\end{align*}
	We can compute the derivative $\frac{\de }{\de\mu}B_k(\xi,\mu)$ as a linear map $V_2\to V_2$ that maps
	 $\nu\in V_2$ to
	\begin{multline*}
	\frac{\de }{\de\mu}B_k(\xi,\mu)[\nu]
	= \sum_{m=0}^k \frac1{2(m+1)!(k-m)!(k+2)} \times \\\times \bigg( 
	\left[(\sum_{j=1}^m J_\mu^{j-1}J_\nu J_\mu^{m-j})\xi,J_\mu^{k-m}\xi\right] + \left[J_\mu^m\xi,(\sum_{j=1}^{k-m} J_\mu^{j-1}J_\nu J_\mu^{k-m-j})\xi \right]
	\bigg) \\
	= \frac1{2(k+2)} \sum_{m=0}^k \frac{k-2m}{(m+1)!(k-m+1)!} \left[\sum_{j=1}^m J_\mu^{j-1}J_\nu J_\mu^{m-j}\xi,J_\mu^{k-m}\xi\right] .
	\end{multline*}
\end{proof}

%%%%%%%%%%%%%%%%%%%%%%%%%%%%%%%%%%%%%%%%%%%%%%%%%%%%%%%%%%%%%%%
\subsection{Jacobian of the sub-Riemannian exponential map}

\begin{definition}\label{def63f121f2}
	For every fixed pair $(\xi,\mu)\in V_1\oplus V_2$, 
	we define the following vector spaces.
	First, we define the following increasing sequence $U^\ell$ of subspaces of $V_1$:
	for $\ell=0$ we set 
	$U^0:=\{0\}\subset V_1$,
	for $\ell =1$ we set $U^1:=\R\xi$,
	and for $\ell > 1$,
	\[
		U^\ell := \Span \{\xi, J_\mu \xi, \dots, J_\mu^{\ell - 1} \xi \} \subset V_1.
	\]
	Second, we take the dual decreasing sequence $U_\ell$ of subspaces of $V_2$:
	for all $\ell\ge0$, set
	\[
	U_\ell := \{\nu \in V_2 : J_\nu (U^\ell) = \{0\}\} .
	\]
	Third, we define the orthogonal splitting 
	$V_2 = \bigoplus_{j=0}^\infty W_j \oplus W_\infty$
	by setting
	\[
	W_\infty:= \left\{\nu \in V_2 : 
			J_\nu(J_\mu^{\ell}\xi) = 0, \forall  \ell \ge 0
			\right\}.
	\]
	and by requiring
	\[
	U_\ell = U_{\ell+1} \oplus W_\ell .
	\]
\end{definition}

For example, $W_0$ is the orthogonal complement of $U_1 = \{\nu\in V_2: J_\nu\xi=0\}$,
while $W_1$ is the orthogonal complement of $U_2 = \{\nu\in V_2: J_\nu\xi = J_\nu J_\mu\xi=0\}$ in $U_1$.
See Section~\ref{se1} for an explicit example in the Heisenberg group.

Notice that
\begin{equation}\label{eq63ff0062}
	\text{if $\nu\in W_\ell\setminus\{0\}$, then $J_\nu(J_\mu^\ell\xi)\neq0$} .
\end{equation}

Since $V_2$ is finite dimensional, only finitely many $W_j$'s are non-trivial.
For $(\xi,\mu)$ fixed, define 
\[
d := \max\{\ell : W_\ell \ne \{0\}\} \in\N\cup\{\infty\},
\]
and
\begin{equation}\label{eq63fef49c}
	N_{\sre}(\xi,\mu) := 2 \sum_{j=0}^d j\dim(W_j) .
\end{equation}
If $W_\infty\neq\{0\}$, then $d=\infty$ and $N_{\sre}(\xi,\mu)=\infty$.
Notice also that $N_{\sre}(\xi,\mu)=N_{\sre}(\xi,s\mu)$ for all $s\neq0$.

\begin{theorem}\label{thm63f122b7}
	Let $G$ be a step-two Carnot group as above, and fix $(\xi,\mu)\in V$.
	
	\begin{enumerate}[label=(\alph*)]
		\item\label{item642aaa82_a}
		If $\nu \in W_\infty$, then $D\sre(\xi,\mu)[0,\nu]=0$.
		In particular, if $W_\infty \ne \{0\}$ then $\Jac(\sre)(\xi,\epsilon\mu)=0$ for all $\epsilon\in\R$.

	\item\label{item642aaa82_d}
		If $W_\infty = \{0\}$, then, for all $\epsilon\in\R$,
		\begin{equation}\label{eq63f122bf}
		\Jac(\sre)(\xi,\epsilon\mu) = \epsilon^{N_\sre(\xi,\mu)} \det(a(\epsilon)) ,
		\end{equation}
		where
		$a(\epsilon)$ is a $n\times n$ matrix, depending on $(\xi,\mu)$, analytic in $\epsilon$ and with $\det(a(0))>0$,
		and where $N_\sre(\xi,\mu)$ is as in~\eqref{eq63fef49c}.
	\end{enumerate}

	In conclusion,
	\begin{equation}\label{eq63ff0371}
		\Kmin(\xi,\mu) = N_\sre(\xi,\mu)
	\end{equation}
	where $\Kmin$ was defined in~\eqref{eq6400a110}.
\end{theorem}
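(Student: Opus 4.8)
The plan is to treat (a) by directly inspecting the differential formula of Theorem~\ref{thm642a9e5d}, to treat (d) by a weighted rescaling of that differential adapted to the splitting $V_2=\bigoplus_\ell W_\ell$, and then to read off \eqref{eq63ff0371} from the lowest-order term of the resulting expansion. For (a), if $\nu\in W_\infty$ then $J_\nu(J_\mu^{i}\xi)=0$ for all $i\ge0$; setting $v=0$ in both components of $D\sre(\xi,\epsilon\mu)[0,\nu]$ and using $J_{\epsilon\mu}=\epsilon J_\mu$, every summand carries a factor $J_\nu(J_\mu^{p}\xi)$ with $p\ge0$ and hence vanishes. Thus $(0,\nu)\in\ker D\sre(\xi,\epsilon\mu)$ for every $\epsilon\in\R$, so $\Jac(\sre)(\xi,\epsilon\mu)=0$; here I use that $W_\infty$ is unchanged under $\mu\mapsto\epsilon\mu$ for $\epsilon\ne0$ and that the case $\epsilon=0$ is immediate.

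For (d) I would write $A(\epsilon):=D\sre(\xi,\epsilon\mu)$ as an endomorphism of $V_1\oplus V_2$ and assign weights, giving weight $0$ to $V_1$ and weight $\ell$ to each $W_\ell\subset V_2$. Using $J_{\epsilon\mu}=\epsilon J_\mu$ together with $J_\nu(J_\mu^{i}\xi)=0$ for $\nu\in W_\ell$ and $i<\ell$, and the dual vanishing $J_\eta(J_\mu^{i}\xi)=0$ for $\eta\in W_j$ and $i<j$ (which is the definition $U_j=\{\eta:J_\eta(U^j)=0\}$), one checks that the block of $A(\epsilon)$ carrying weight $\ell$ into weight $j$ is $O(\epsilon^{j+\ell})$. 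Conjugating by the diagonal rescaling $\Delta$ acting as $\epsilon^{-w}$ on the weight-$w$ part, on both domain and codomain, then makes $a(\epsilon):=\Delta A(\epsilon)\Delta$ analytic and yields $\det A(\epsilon)=\epsilon^{2\sum_\ell\ell\dim W_\ell}\det a(\epsilon)=\epsilon^{N_\sre(\xi,\mu)}\det a(\epsilon)$. The factor $2$ in $N_\sre$ is exactly the effect of rescaling both sides by the same $\Delta$.

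The crux is to show $\det a(0)>0$. The weight-$0$ diagonal contains the $V_1\to V_1$ block $\sum_k\frac{\epsilon^k}{(k+1)!}J_\mu^k$, which equals $\Id$ at $\epsilon=0$, so a Schur complement with respect to it reduces the claim to positivity of an operator $M$ on $\bigoplus_\ell W_\ell$ whose $(j,\ell)$-block is $M_{j\ell}=\kappa_{j\ell}\,T_j^{*}T_\ell$, where $T_\ell\nu:=J_\nu(J_\mu^\ell\xi)$ and the scalars $\kappa_{j\ell}$ are assembled from the coefficients of the series in Theorem~\ref{thm63d04190}. The decisive computation is that $\kappa_{j\ell}=d_jd_\ell/(j+\ell+3)$ with $d_j=1/(j!\,(j+2))>0$; in particular $\kappa_{j\ell}=\kappa_{\ell j}$, so $M$ is self-adjoint and $\langle M\nu,\nu\rangle=\sum_{j,\ell}\kappa_{j\ell}\langle T_j\nu_j,T_\ell\nu_\ell\rangle$. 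Since $1/(j+\ell+3)=\int_0^\infty e^{-(j+3/2)t}e^{-(\ell+3/2)t}\dd t$ exhibits $[1/(j+\ell+3)]$ as a Gram matrix of linearly independent exponentials, this Cauchy matrix is positive definite; with $d_j>0$ the matrix $[\kappa_{j\ell}]$ is positive definite, and combined with the injectivity of each $T_\ell$ on $W_\ell$---which is precisely \eqref{eq63ff0062}---it forces $\langle M\nu,\nu\rangle>0$ for $\nu\ne0$. Hence $M$, and therefore $a(0)$, has positive determinant.

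Finally, because $\xi\in V_1$ and $\mu\in V_2$ one has $\zeta_\lambda(\xi,\mu)=(\xi,\lambda\mu)$, so $\Jac(\sre)(\zeta_\lambda(\xi,\mu))=\Jac(\sre)(\xi,\lambda\mu)$. If $W_\infty=\{0\}$ this equals $\lambda^{N_\sre(\xi,\mu)}\det a(\lambda)$ with $\det a(0)>0$, whose lowest power of $\lambda$ is $N_\sre(\xi,\mu)$, giving $\Kmin(\xi,\mu)=N_\sre(\xi,\mu)$; if $W_\infty\ne\{0\}$ then (a) makes the Jacobian vanish identically, so $\Kmin(\xi,\mu)=\infty=N_\sre(\xi,\mu)$. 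This proves \eqref{eq63ff0371}. I expect the main obstacle to be the reduction to $M$ and the evaluation of the $\kappa_{j\ell}$: only once they are put in the Cauchy form $d_jd_\ell/(j+\ell+3)$ does the a priori coupling between different $W_\ell$'s organize into a positive-definite form, whereas the weight bookkeeping that produces block order $\epsilon^{j+\ell}$ is routine but must be tracked with care.
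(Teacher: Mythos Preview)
Your proposal is correct and follows essentially the same route as the paper: part~(a) by inspection of Theorem~\ref{thm642a9e5d}, part~(d) by the weighted block decomposition $a(\epsilon)=\Delta A(\epsilon)\Delta$ (which is exactly the paper's block-by-block division by $\epsilon^{r+s}$), and then Schur complement plus positive definiteness of the Cauchy-type matrix $[1/(j+\ell+3)]$ combined with injectivity~\eqref{eq63ff0062} to obtain $\det a(0)>0$. The only cosmetic difference is that you establish positive definiteness via the integral representation $\tfrac{1}{j+\ell+3}=\int_0^\infty e^{-(j+3/2)t}e^{-(\ell+3/2)t}\dd t$, whereas the paper recognizes $[1/(j+\ell+3)]$ as a Hilbert-matrix minor and uses a tensor-product-of-scalar-products lemma; your version handles the vector-valued case in one stroke and avoids that auxiliary lemma, but the two arguments are equivalent and your constants $d_j=1/(j!(j+2))=(j+1)/(j+2)!$ match the paper's Lemma~\ref{lem63ff0166} exactly.
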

A direct consequence of Theorem \ref{thm63f122b7} is the following corollary.

\begin{corollary} \label{step2}
In the setting of step-two Carnot groups, we have 
\[
 \Jac(\sre)(\xi) > 0, \qquad \forall \xi\in\scr D.
\]
\end{corollary}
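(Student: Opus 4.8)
The plan is to read off the result from Theorem~\ref{thm63f122b7} together with the scaling structure of $\scr D$ recorded in Lemma~\ref{lem64268086}. Write an arbitrary element of $\scr D$ as $(\xi,\mu)\in V_1\oplus V_2$; by Definition~\ref{def6401a951} (taking $t=1$) we already know $\Jac(\sre)(\xi,\mu)\neq0$, so the whole content is to upgrade ``nonzero'' to ``positive''. The key observation is that for step-two groups the dilation $\zeta_\epsilon$ acts only on the $V_2$-component, namely $\zeta_\epsilon(\xi,\mu)=(\xi,\epsilon\mu)$, so the one-parameter family $\epsilon\mapsto\Jac(\sre)(\xi,\epsilon\mu)$ appearing in Theorem~\ref{thm63f122b7} coincides with $\epsilon\mapsto\Jac(\sre)(\zeta_\epsilon(\xi,\mu))$.

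First I would rule out the case $W_\infty\neq\{0\}$. If $W_\infty\neq\{0\}$, then Theorem~\ref{thm63f122b7}\ref{item642aaa82_a} forces $\Jac(\sre)(\xi,\epsilon\mu)=0$ for every $\epsilon\in\R$; taking $\epsilon=1$ contradicts $(\xi,\mu)\in\scr D$. Hence $W_\infty=\{0\}$ and Theorem~\ref{thm63f122b7}\ref{item642aaa82_d} applies, providing an $n\times n$ matrix $a(\epsilon)$, analytic in $\epsilon$ with $\det(a(0))>0$, such that
\[
\Jac(\sre)(\xi,\epsilon\mu)=\epsilon^{N_\sre(\xi,\mu)}\det(a(\epsilon)),\qquad\forall\epsilon\in\R .
\]

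Next I would bring in the scaling invariance of $\scr D$. By Lemma~\ref{lem64268086} we have $\zeta_\epsilon(\xi,\mu)=(\xi,\epsilon\mu)\in\scr D$ for all $\epsilon\in(0,1]$, and therefore $\Jac(\sre)(\xi,\epsilon\mu)\neq0$ on that range. Since $\epsilon^{N_\sre(\xi,\mu)}>0$ for $\epsilon\in(0,1]$, this forces $\det(a(\epsilon))\neq0$ there. Together with $\det(a(0))>0$, the continuous function $\epsilon\mapsto\det(a(\epsilon))$ is nonvanishing on the connected interval $[0,1]$, hence of constant sign; being positive at $\epsilon=0$, it is positive throughout, and evaluating at $\epsilon=1$ gives $\Jac(\sre)(\xi,\mu)=\det(a(1))>0$.

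The argument is essentially immediate once Theorem~\ref{thm63f122b7} is available; the only step requiring care is the final sign-propagation, where one must invoke Lemma~\ref{lem64268086} to know that $\det(a)$ cannot vanish on $(0,1]$ and therefore cannot flip sign before $\epsilon$ reaches $1$. I do not expect a genuine obstacle here; the main subtlety is purely notational, namely keeping straight the two meanings of $\xi$ (the full covector in the corollary versus its $V_1$-component in Theorem~\ref{thm63f122b7}) and recognizing that $\zeta_\epsilon$ rescales only the $V_2$-part.
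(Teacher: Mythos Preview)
Your proof is correct and is precisely the argument the paper intends: it states the corollary as ``a direct consequence of Theorem~\ref{thm63f122b7}'' without writing out a proof, and your sign-propagation via $\det(a(0))>0$ and continuity of $\det(a(\epsilon))$ on $[0,1]$ is exactly the natural elaboration. A tiny simplification: you can bypass Lemma~\ref{lem64268086} and get $\Jac(\sre)(\xi,\epsilon\mu)\neq0$ for $\epsilon\in(0,1]$ straight from the definition of $\scr D$ (which already gives $\Jac(\sre)(t(\xi,\mu))\neq0$ on $(0,1]$) combined with the scaling identity~\eqref{eq64009980}.
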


\begin{remark}\label{rel2}
Fix a scalar product $\langle \cdot,\cdot \rangle$ on $\frk g$ as before and identify $\frk g$ with $\frk g^*$. Considering Remark \ref{rel1}, the value of $\Kmin(\xi,\mu)$ (or equivalently $\cal N_\lambda$) can also be computed out by geodesic growth vector $\cal G_\lambda$ with $\lambda = (\xi,\mu)$. See \cite[Definition 5.44]{MR3852258}, where ``ample'' is equivalent to strictly normal in our setting. In fact, it can be shown that the geodesic growth vector is given by $\cal G_\lambda = (k_1, \dots, k_{d + 2})$ with $k_{\ell + 1} = n - \dim (U_\ell)$ for $0 \le \ell \le d + 1$ and this approach gives the same number.
\end{remark}

\begin{proof}[Proof of Theorem~\ref{thm63f122b7}]
	By inspection of the formula in Theorem~\ref{thm642a9e5d}, one can easily check the statement~\ref{item642aaa82_a}. 

	Suppose now that $W_\infty = \{0\}$. 
	Define $A(\epsilon):=D\sre(\xi,\epsilon\mu)$, which is a linear map from $V$ to $V$.
	Using orthogonal projections in $V$, we decompose this linear maps into 
   $A^{V_1}_{V_1}(\epsilon):V_1\to V_1$, 
	$A^{V_1}_{W_\ell}(\epsilon):V_1\to W_\ell$, 
	$A^{W_\ell}_{V_1}(\epsilon):W_\ell\to V_1$, and
	$A^{W_r}_{W_s}(\epsilon):W_r\to W_s$, 
	for $0 \le \ell, r,s \le d$.
	For example, $A^{W_r}_{W_s}(\epsilon) = \pi_{s}\circ A(\epsilon)|_{W_r}$. Here $\pi_s$ denotes the orthogonal projection onto $W_s$.

	Notice that if $v\in V_1$, $\nu'\in W_\ell$, $k\ge1$, $0\le m\le k$ are such that
	\[
	0\neq\langle [J_\mu^m(v),J_\mu^{k-m}\xi],\nu' \rangle = -\langle J_\mu^m(v),J_{\nu'}J_\mu^{k-m}\xi \rangle ,
	\]
	then $k-m\ge \ell$, and thus $k\ge \ell$.
	Therefore, if $\ell\ge1$, we have
	\begin{align*}
	A^{V_1}_{W_\ell}(\epsilon)v 
	&= \pi_\ell\left(
		 \sum_{k=1}^\infty \frac{\epsilon^{k}}{2(k+2)} \sum_{m=0}^k \frac{k-2m}{(m+1)!(k-m+1)!}[J_\mu^m(v),J_\mu^{k-m}\xi]
		\right) \\
	&= \epsilon^\ell \pi_\ell\left(
		 \sum_{k=\ell}^\infty \frac{\epsilon^{k-\ell}}{2(k+2)} \sum_{m=0}^k \frac{k-2m}{(m+1)!(k-m+1)!}[J_\mu^m(v),J_\mu^{k-m}\xi]
		\right) ;
	\end{align*}
	while if $\ell=0$, then
	\begin{align*}
	A^{V_1}_{W_0}(\epsilon)v 
	&= \pi_0\left(
		 \sum_{k=1}^\infty \frac{\epsilon^{k}}{2(k+2)} \sum_{m=0}^k \frac{k-2m}{(m+1)!(k-m+1)!}[J_\mu^m(v),J_\mu^{k-m}\xi]
		\right) \\
	&= \epsilon \pi_0\left(
		 \sum_{k=1}^\infty \frac{\epsilon^{k-1}}{2(k+2)} \sum_{m=0}^k \frac{k-2m}{(m+1)!(k-m+1)!}[J_\mu^m(v),J_\mu^{k-m}\xi]
		\right) .
	\end{align*}
	
	Notice that if $\nu\in W_\ell$, $k\ge1$ and $1\le m\le k$ are such that $J_\nu J_\mu^{k-m}\xi \neq0$, then $k-m\ge\ell$, then $k\ge\ell+1$.
	Therefore
	\begin{align*}
	A^{W_\ell}_{V_1} (\epsilon)\nu 
	&= \sum_{k=1}^\infty \frac{\epsilon^{k-1}}{(k+1)!} \left(\sum_{m=1}^k J_\mu^{m-1} J_{(\nu)} J_\mu^{k-m} \right)\xi \\
	&= \epsilon^\ell \sum_{k=\ell+1}^\infty \frac{\epsilon^{k-\ell-1}}{(k+1)!} \left(\sum_{m=1}^k J_\mu^{m-1} J_{(\nu)} J_\mu^{k-m} \right)\xi .
	\end{align*}
	
	Notice that if $\nu\in W_r$, $\nu'\in W_s$, $k\ge1$, $1\le m\le k$ and $1\le j\le m$ are such that 
	\[
	0\neq\langle [J_\mu^{j-1}J_{(\nu)} J_\mu^{m-j}\xi,J_\mu^{k-m}\xi],\nu' \rangle
	= - \langle J_{\nu'}J_\mu^{k-m}\xi , J_\mu^{j-1}J_{(\nu)} J_\mu^{m-j}\xi \rangle ,
	\]
	then $J_\nu J_\mu^{m-j}\xi\neq0$ and $J_{\nu'}J_\mu^{k-m}\xi\neq0$, which implies $k-m\ge s$ and $m-j\ge r$, then $m\ge r+1$ and $k\ge r+s+1$.
	Therefore, 
	\begin{align*}
	A^{W_r}_{W_s}(\epsilon)\nu
	&= \pi_s \left( 
		\sum_{k=1}^\infty \frac{\epsilon^{k-1}}{2(k+2)} \sum_{m=1}^k \frac{k-2m}{(m+1)!(k-m+1)!} \left[(\sum_{j=1}^m J_\mu^{j-1}J_{\nu} J_\mu^{m-j})\xi,J_\mu^{k-m}\xi\right] 
		\right) \\
	&= \epsilon^{r+s} \pi_s\left( 
		\sum_{k=r+s+1}^\infty \frac{\epsilon^{k-r-s-1}}{2(k+2)} \sum_{m=1}^k \frac{k-2m}{(m+1)!(k-m+1)!} \left[(\sum_{j=1}^m J_\mu^{j-1}J_{\nu} J_\mu^{m-j})\xi,J_\mu^{k-m}\xi\right]
		\right) .
	\end{align*}
	
	Define the matrix $a(\epsilon)$ as
	\begin{align*}
		a^{V_1}_{V_1}(\epsilon) &= A^{V_1}_{V_1}(\epsilon) ,&
		a^{W_\ell}_{V_1}(\epsilon) &= A^{W_\ell}_{V_1}(\epsilon)/\epsilon^\ell ,\\
		a^{V_1}_{W_\ell}(\epsilon) &= A^{V_1}_{W_\ell}(\epsilon)/\epsilon^\ell ,&
		a^{W_r}_{W_s}(\epsilon) &= A^{W_r}_{W_s}(\epsilon)/\epsilon^{r+s} ,
	\end{align*}
	where $0\le\ell,r,s\le d$.
	Clearly, $\epsilon\mapsto a(\epsilon)$ is an analytic map with $a(0)$ given by
	\begin{equation}\label{eq63fef8a9}
	\begin{aligned}
	a^{V_1}_{V_1}(0) &= \Id_{V_1} \\
	a^{V_1}_{W_0}(0) &= 0 \\
	a^{V_1}_{W_\ell}(0) &= 
	\pi_\ell\left(
	\frac{1}{2(\ell+2)}  \frac{\ell}{(\ell+1)!}[(\cdot),J_\mu^{\ell}\xi] \right)
	= \frac{\ell}{2(\ell+2)!} \pi_\ell[(\cdot),J_\mu^{\ell}\xi]
	 \\
	a^{W_\ell}_{V_1}(0) &= \frac{1}{(\ell+2)!}  J_{(\cdot)} J_\mu^{\ell} \xi \\
	a^{W_r}_{W_s}(0) &= \frac{1}{2(r+s+3)} \frac{s-r-1}{(r+2)!(s+1)!} \pi_s\left[J_{(\cdot)} J_\mu^{r}\xi,J_\mu^{s}\xi\right] .
	\end{aligned}
	\end{equation}
	The proof that $\det(a(0))>0$ is long, we will show it in the next Section~\ref{sec642ab044}: 
	see Lemma~\ref{lem63ff0166}. 
	Finally the statement~\ref{item642aaa82_d} follows from the relation $\det(A(\epsilon)) = \epsilon^{N_\sre(\xi,\mu)} \det(a(\epsilon))$
\end{proof}

%%%%%%%%%%%%%%%%%%%%%%%%%%%%%%%%%%%%%%%%%%%%%%%%%%%%%%%%%%%%%%%
\subsection{The determinant of $a(0)$}\label{sec642ab044}

\begin{lemma}
	For $0 \le \ell,r,s\le d$,
define the following maps:
	\begin{align*}
		M_\ell:V_1\to W_\ell , &\quad M_\ell(v) := \pi_\ell[v,J_\mu^\ell\xi] , \\
		M^\ell:W_\ell\to V_1 , &\quad M^\ell(\nu) := -J_\nu J_\mu^\ell\xi , \\
		M^r_s:W_r\to W_s , &\quad M^r_s(\nu) := -\pi_s[J_\nu J_\mu^r\xi,J_\mu^s\xi] .
	\end{align*}
	The following identities hold:
	\begin{align*}
	(M_\ell)^* &= M^\ell , \\
	(M^r_s)^* &= M^s_r , \\
	M^r_s &=  M_s\circ M_r^* =  M_s\circ M^r ,
	\end{align*}
	where ${\cdot}^*$ denotes the conjugate with respect to the scalar product $\langle \cdot,\cdot\rangle$.
\end{lemma}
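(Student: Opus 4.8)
The plan is to prove the three identities by elementary manipulations, using only the defining property $\langle u,[v,w]\rangle = \langle J_u v,w\rangle$ from~\eqref{eq64c3d1c5}, the skew-symmetry of each $J_\nu$ on $(V_1,\langle\cdot,\cdot\rangle)$, and the self-adjointness of the orthogonal projection $\pi_\ell$ onto $W_\ell$ (so that $\langle \pi_\ell w,\nu\rangle=\langle w,\nu\rangle$ whenever $\nu\in W_\ell$). None of the steps presents a real difficulty; the only point that needs attention is keeping track of the signs produced by skew-symmetry.

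First I would verify $(M_\ell)^*=M^\ell$, which also fixes the pattern for the rest. For $v\in V_1$ and $\nu\in W_\ell$, dropping $\pi_\ell$ against $\nu\in W_\ell$ and then applying~\eqref{eq64c3d1c5} with $u=\nu$ gives
\[
\langle M_\ell(v),\nu\rangle
=\langle [v,J_\mu^\ell\xi],\nu\rangle
=\langle J_\nu v,J_\mu^\ell\xi\rangle
=-\langle v,J_\nu J_\mu^\ell\xi\rangle
=\langle v,M^\ell(\nu)\rangle,
\]
where the third equality is skew-symmetry of $J_\nu$. This is exactly the adjoint relation $(M_\ell)^*=M^\ell$.

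Next I would record the composition identity. For $\nu\in W_r$ one has $M^r(\nu)=-J_\nu J_\mu^r\xi\in V_1$, so applying $M_s$ yields $M_s(M^r(\nu))=\pi_s[-J_\nu J_\mu^r\xi,J_\mu^s\xi]=M^r_s(\nu)$; hence $M^r_s=M_s\circ M^r$, and combining with the first identity (in the form $M_r^*=M^r$) gives $M^r_s=M_s\circ M_r^*$. The second identity $(M^r_s)^*=M^s_r$ then follows formally, without any further computation: taking adjoints of $M^r_s=M_s\circ M_r^*$ yields $(M^r_s)^*=M_r\circ M_s^*=M_r\circ M^s$, and the composition identity with $r$ and $s$ interchanged identifies $M_r\circ M^s$ with $M^s_r$.

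Alternatively, $(M^r_s)^*=M^s_r$ can be checked head-on, expanding $\langle M^r_s(\nu),\nu'\rangle$ and $\langle \nu,M^s_r(\nu')\rangle$ for $\nu\in W_r$, $\nu'\in W_s$ and reducing both, via~\eqref{eq64c3d1c5} and skew-symmetry, to the common value $-\langle J_\nu J_\mu^r\xi,J_{\nu'}J_\mu^s\xi\rangle$. This direct route is where the only genuine bookkeeping occurs, since the two skew-symmetric operators $J_\nu$ and $J_{\nu'}$ must be moved across the scalar product; the formal route above avoids this entirely, so I would present it as the main argument and relegate the direct computation to a remark.
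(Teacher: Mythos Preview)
Your proof is correct and follows essentially the same approach as the paper: the adjoint identity $(M_\ell)^*=M^\ell$ and the composition identity $M^r_s=M_s\circ M^r$ are proved exactly as in the paper. The only minor difference is that the paper verifies $(M^r_s)^*=M^s_r$ by the direct computation you relegate to a remark, whereas you deduce it formally from the other two identities; both arguments are equally short and valid.
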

\begin{proof}
	Let $v\in V_1$ and $\nu\in W_\ell$.
	Then
	\[
	\langle M_\ell v,\nu \rangle
	= \langle [v,J_\mu^\ell\xi] , \nu \rangle
	= -\langle v , J_\nu J_\mu^\ell\xi \rangle
	= \langle v,M^\ell\nu \rangle .
	\]
	Therefore, $(M_\ell)^* = M^\ell$.
	
	Let $\nu_r\in W_r$ and $\nu_s\in W_s$.
	Then
	\[
	\langle M^r_s\nu_r,\nu_s \rangle
	= - \langle [J_{\nu_r} J_\mu^r\xi,J_\mu^s\xi] , \nu_s\rangle 
	= \langle J_{\nu_r} J_\mu^r\xi , J_{\nu_s}J_\mu^s\xi \rangle
	= - \langle \nu_r , [J_{\nu_s} J_\mu^s\xi,J_\mu^r\xi] \rangle
	= \langle \nu_r, M^s_r\nu_s \rangle .
	\]
	Therefore $(M^r_s)^* = M^s_r$.
	
	Let $\nu_r\in W_r$.
	Then
	\[
	M_s\circ M^r \nu_r
	= M_s(-J_{\nu_r} J_\mu^r\xi)
	= - \pi_s[ J_{\nu_r} J_\mu^r\xi ,J_\mu^s\xi]
	= M^r_s \nu_r .
	\]
	So, the last equality is also proved.
\end{proof}

\begin{lemma}\label{lem63ff0195}
	Define
	\[
		\scr M:=\left( \frac{M^r_s}{r+s+3} \right)_{r,s=0}^d : V_2\to V_2 .
	\]
	The matrix $\scr M$ is symmetric, positive definite and non-singular.
	In particular, $\det(\scr M)>0$.
\end{lemma}
\begin{proof}
	Notice that for all $\nu,\nu'\in V_2$,
	\[
	\langle \scr M\nu ,  \nu' \rangle
	= \sum_{r,s=0}^d \frac{ \langle M^r_s (\pi_r \nu), 
\pi_s \nu' \rangle }{r+s+3}   
	\]

	We apply Lemma~\ref{lem12190942} to the following spaces: $V_1$ with the scalar product $g=\langle \cdot,\cdot \rangle$ and $\R^{d+1}$ with the scalar product $h$ whose matrix with respect to the standard basis $(e_0,\dots,e_d)$ of $\R^{d+1}$ is 
	\[
	h_{ij}=h(e_i,e_j) = \frac1{i+j+3} = \frac1{(i+2)+(j+2)-1} ,
	\]
	which is a minor of a Hilbert matrix $\left(\frac1{i+j-1}\right)_{i,j\ge1}$.
	By Lemma~\ref{lem12190942}, the bilinear form $b=g\otimes h$ is a scalar product on $V_1\otimes\R^{d+1}$.
	
	Let $\bb M:V_2\to V_1\otimes\R^{d+1}$ be the map
	\[
	\bb M(\nu) = \sum_{\ell=0}^d M^\ell(\pi_\ell\nu)\otimes e_\ell .
	\]
	We claim that, for $\nu,\nu'\in V_2$,
	\begin{equation}\label{eq63fefe9a}
		\bb M^*(b)(\nu,\nu') = \langle \scr M\nu,\nu' \rangle .
	\end{equation}
	Indeed, 
	\begin{align*}
	\bb M^*(b)(\nu,\nu')
	&= b(\bb M\nu,\bb M\nu') \\
	&= \sum_{r,s=0}^d b(M^r(\pi_r\nu)\otimes e_r,M^s(\pi_s\nu')\otimes e_s) \\
	&= \sum_{r,s=0}^d \frac{\langle M^r(\pi_r\nu),M^s(\pi_s\nu') \rangle}{r+s+3} \\
	&= \sum_{r,s=0}^d \frac{\langle M_sM^r(\pi_r\nu),\pi_s\nu' \rangle}{r+s+3} \\
	&= \langle \scr M\nu,\nu' \rangle .
	\end{align*}
	
	Next, we claim that the map $\bb M$ is injective.
	Indeed, if $\nu\in V_2$ is such that $\bb M\nu=0$,
	then $0 = M^\ell(\pi_\ell\nu) = -J_{\pi_\ell\nu} J_\mu^\ell\xi$ for all $\ell$.
	From~\eqref{eq63ff0062}, we obtain that $\pi_\ell\nu=0$ for all $\ell$, and thus $\nu=0$.
	
	Since $b$ is positive definite and $\bb M$ is injective, 
	we obtain from~\eqref{eq63fefe9a} that $\scr M$ is symmetric, non-singular and positive definite.
	In particular, $\det(\scr M)>0$.
\end{proof}

The next lemma is a standard result from Linear Algebra.

\begin{lemma}\label{lem12181152}
	Let $A,B,C,D$ be matrices of suitable dimensions, or linear maps between suitable spaces.
	Suppose $A$ is invertible.
	Then
	\[
	\det \begin{pmatrix}A&B\\ C&D\end{pmatrix} 
	= \det(A)\cdot \det(D-CA^{-1}B) .
	\]
\end{lemma}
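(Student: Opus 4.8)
The plan is to reduce the statement to the determinant of block-triangular matrices by means of a single Schur-complement factorization. Since $A$ is invertible, I would first record the identity
\[
\begin{pmatrix}A&B\\ C&D\end{pmatrix}
= \begin{pmatrix}\Id&0\\ CA^{-1}&\Id\end{pmatrix}
\begin{pmatrix}A&B\\ 0&D-CA^{-1}B\end{pmatrix}
\]
and check it by a direct block multiplication. Three of the four blocks on the right are immediate, and the only one requiring a word is the lower-right entry, where $CA^{-1}B+(D-CA^{-1}B)=D$, so the product indeed reproduces the original matrix.

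Next I would take determinants of both sides and use multiplicativity of the determinant. The first factor is lower block-triangular with identity blocks on the diagonal, so its determinant is $1$; the second factor is upper block-triangular, so its determinant is $\det(A)\cdot\det(D-CA^{-1}B)$. Multiplying the two yields exactly
\[
\det\begin{pmatrix}A&B\\ C&D\end{pmatrix}
= \det(A)\cdot\det(D-CA^{-1}B),
\]
which is the assertion.

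The only ingredient beyond elementary matrix algebra is the classical fact that the determinant of a block-triangular matrix equals the product of the determinants of its diagonal blocks; if full self-containment were desired, this could be obtained by Laplace expansion along the columns meeting the zero block, or by induction on the size of the off-diagonal block. I do not anticipate any genuine obstacle: the lemma is standard, and essentially all of its content is packaged into exhibiting the factorization above, after which the conclusion is purely formal.
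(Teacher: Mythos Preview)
Your proof is correct and is the standard Schur-complement factorization argument. The paper itself gives no proof of this lemma, simply noting that it is ``a standard result from Linear Algebra'', so your argument supplies exactly the details the paper omits.
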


\begin{lemma}\label{lem63ff0166}
	The determinant of $a(0)$ is
	\[
	\det(a(0)) = \left(\prod_{\ell=0}^d \left(\frac{\ell+1}{(\ell+2)!}\right)^{\dim W_\ell} \right)^2 \cdot \det\left( \scr M \right) > 0 .
	\]
\end{lemma}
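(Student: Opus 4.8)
The plan is to reduce $\det(a(0))$ to $\det(\scr M)$ by a Schur complement, after re-expressing every block of $a(0)$ through the maps $M_\ell$, $M^\ell$, $M^r_s$ introduced above. First I would rewrite the entries of~\eqref{eq63fef8a9} using $M_\ell(v)=\pi_\ell[v,J_\mu^\ell\xi]$, $M^\ell(\nu)=-J_\nu J_\mu^\ell\xi$ and $M^r_s(\nu)=-\pi_s[J_\nu J_\mu^r\xi,J_\mu^s\xi]$: the blocks become $a^{V_1}_{V_1}(0)=\Id_{V_1}$, $a^{V_1}_{W_\ell}(0)=\frac{\ell}{2(\ell+2)!}M_\ell$, $a^{W_\ell}_{V_1}(0)=-\frac{1}{(\ell+2)!}M^\ell$, and $a^{W_r}_{W_s}(0)=\frac{r+1-s}{2(r+s+3)(r+2)!(s+1)!}M^r_s$.

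Next I would apply Lemma~\ref{lem12181152} with $A=a^{V_1}_{V_1}(0)=\Id_{V_1}$, the $V_1$--$V_1$ block, which is invertible. Writing $a(0)=\begin{pmatrix}A&B\\C&D\end{pmatrix}$, where $B$ collects the maps $a^{W_\ell}_{V_1}(0):W_\ell\to V_1$, $C$ the maps $a^{V_1}_{W_s}(0):V_1\to W_s$, and $D=(a^{W_r}_{W_s}(0))_{r,s}$, Lemma~\ref{lem12181152} gives $\det(a(0))=\det(D-CB)$. The $(r,s)$-block of $CB$ is the composition $a^{V_1}_{W_s}(0)\circ a^{W_r}_{V_1}(0)=-\frac{s}{2(s+2)!(r+2)!}M_s M^r=-\frac{s}{2(s+2)!(r+2)!}M^r_s$, using the identity $M^r_s=M_s\circ M^r$ from the preceding lemma.

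The crux is then the algebraic simplification of the $(r,s)$-block of $D-CB$. Collecting the two contributions, this block equals $\frac{M^r_s}{2(r+2)!(s+1)!}\left(\frac{r+1-s}{r+s+3}+\frac{s}{s+2}\right)$, and the key observation is that $(r+1-s)(s+2)+s(r+s+3)=2(r+1)(s+1)$, so the bracketed factor collapses and the block becomes $\frac{r+1}{(r+2)!}\cdot\frac{s+1}{(s+2)!}\cdot\frac{M^r_s}{r+s+3}$. This exhibits $D-CB=\Lambda\,\scr M\,\Lambda$, where $\scr M=\left(\frac{M^r_s}{r+s+3}\right)_{r,s}$ is the matrix of Lemma~\ref{lem63ff0195} and $\Lambda=\diag\left(\frac{\ell+1}{(\ell+2)!}\Id_{W_\ell}\right)_{\ell=0}^d$ is block-diagonal.

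Finally, taking determinants gives $\det(a(0))=\det(\Lambda)^2\det(\scr M)=\left(\prod_{\ell=0}^d\left(\frac{\ell+1}{(\ell+2)!}\right)^{\dim W_\ell}\right)^2\det(\scr M)$, and $\det(\scr M)>0$ by Lemma~\ref{lem63ff0195}, which yields both the claimed formula and its positivity. I expect the one genuinely delicate point to be the bookkeeping that makes the numerator factor as $2(r+1)(s+1)$; once that identity is in hand, the factorization $D-CB=\Lambda\scr M\Lambda$ and the conclusion are immediate.
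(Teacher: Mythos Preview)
Your proposal is correct and follows essentially the same route as the paper: rewrite the blocks of $a(0)$ via $M_\ell,M^\ell,M^r_s$, apply the Schur complement Lemma~\ref{lem12181152} with $A=\Id_{V_1}$, and then simplify the $(r,s)$-block of $D-CB$ using the identity $(r+1-s)(s+2)+s(r+s+3)=2(r+1)(s+1)$. The only cosmetic difference is that you package the final factorisation as $D-CB=\Lambda\,\scr M\,\Lambda$ with an explicit diagonal $\Lambda$, whereas the paper pulls the scalar factors out of the determinant directly; these are the same computation.
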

\begin{proof}
	We can rewrite the map $a(0)$ from~\eqref{eq63fef8a9} as follows:
	\begin{align*}
		a^{V_1}_{V_1}(0) &= \Id_{V_1} \\
		a^{V_1}_{W_\ell}(0) &= \frac{\ell}{2(\ell+2)!} M_\ell \\
		a^{W_\ell}_{V_1}(0) &= - \frac{1}{(\ell+2)!}  M^\ell \\
		a^{W_r}_{W_s}(0) &= \frac{1+r-s}{2(r+s+3)(r+2)!(s+1)!} M^r_s .
	\end{align*}

	Since
	\[
	a^{V_1}_{W_s}(0) a^{W_r}_{V_1}(0)
	= - \frac{s}{2(s+2)!} \frac{1}{(r+2)!} M_s\circ M^r
	= - \frac{s}{2(s+2)!(r+2)!}  M^r_s ,
	\]
	then we get from Lemma~\ref{lem12181152}
	\begin{align*}
	\det(a(0)) 
	&= \det\left(
		(a^{W_r}_{W_s}(0))_{r,s=0}^d - (a^{V_1}_{W_\ell}(0))_{\ell=0}^d (a^{W_\ell}_{V_1}(0))_{\ell=0}^d
		\right) \\
	&= \det\left(
		\left(\frac{1+r-s}{2(r+s+3)(r+2)!(s+1)!} M^r_s \right)_{r,s=0}^d 
		+ \left( \frac{s}{2(s+2)!(r+2)!}  M^r_s \right)_{r,s=0}^d
		\right) \\
	&= \det\left(
		\left( \frac{(r+1)(s+1)}{(r+s+3)(r+2)!(s+2)!} M^r_s \right)_{r,s=0}^d 
		\right) \\
	&= \left(\prod_{\ell=0}^d \left(\frac{\ell+1}{(\ell+2)!}\right)^{\dim W_\ell} \right)^2 \cdot \det\left( \left( \frac{M^r_s}{r+s+3} \right)_{r,s=0}^d \right) .
	\end{align*}
\end{proof}

\begin{lemma}[A side-note on tensor product of matrices]\label{lem12190942}
	Let $V$ and $W$ be a two vector spaces and let $g$ and $h$ be two bilinear maps on $V$ and $W$, respectively.
	Let  $b=g\otimes h$ be the bilinear map on $V\otimes W$ defined by $b(v_1\otimes w_1,v_2\otimes w_2) = g(v_1,v_2)h(w_1,w_2)$ for all $v_i\in V$ and $w_i\in W$.
	
	If $g$ and $h$ are symmetric and positive definite (i.e., scalar products), then $b$ is also symmetric and positive definite (i.e., a scalar product).
\end{lemma}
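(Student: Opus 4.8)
The plan is to check symmetry directly and then reduce positive-definiteness to that of $g$ by diagonalizing only the second factor. Symmetry is immediate: since $g$ and $h$ are symmetric, on pure tensors
\[
b(v_1\otimes w_1, v_2\otimes w_2) = g(v_1,v_2)h(w_1,w_2) = g(v_2,v_1)h(w_2,w_1) = b(v_2\otimes w_2, v_1\otimes w_1),
\]
and symmetry on all of $V\otimes W$ then follows by bilinearity of $b$.

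The substantive part is positive-definiteness. Because $h$ is a scalar product on $W$, I would fix an $h$-orthonormal basis $f_1,\dots,f_n$ of $W$, so that $h(f_k,f_l)=\delta_{kl}$. Every $z\in V\otimes W$ has a unique expression $z=\sum_{k=1}^n u_k\otimes f_k$ with $u_k\in V$. Expanding $b(z,z)$ by bilinearity and using orthonormality gives
\[
b(z,z) = \sum_{k,l=1}^n g(u_k,u_l)\,h(f_k,f_l) = \sum_{k=1}^n g(u_k,u_k) \ge 0 ,
\]
where the inequality uses that $g$ is positive semi-definite. Moreover, $b(z,z)=0$ forces $g(u_k,u_k)=0$ for every $k$, hence $u_k=0$ for every $k$ by positive-definiteness of $g$, and therefore $z=0$. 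This shows $b$ is positive definite, completing the proof.

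There is no genuine obstacle here, as this is a classical fact; the only point requiring a little care is that the diagonalization is carried out on a single factor, using the $h$-orthonormal basis, so that the off-diagonal terms drop out and one is left with a manifestly nonnegative sum of $g$-norms. One could instead choose orthonormal bases in \emph{both} factors, which would exhibit $b$ as having the identity as its Gram matrix on the basis $\{e_i\otimes f_k\}$, but the one-sided reduction above is the shortest route and avoids invoking an orthonormal basis of $V$.
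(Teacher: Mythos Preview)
Your proof is correct and follows essentially the same approach as the paper: both pick an $h$-orthonormal basis of $W$, write a general tensor as $\sum_k u_k\otimes f_k$, and reduce $b(z,z)$ to $\sum_k g(u_k,u_k)$. The only difference is cosmetic---you spell out the symmetry check on pure tensors, whereas the paper simply says ``Clearly $b$ is symmetric.''
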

\begin{proof}
	Clearly $b$ is symmetric.
	
	Let $(e_1,\dots,e_d)$ be an orthonormal basis for $(W,h)$.
	Any element of $V\otimes W$ can be written as $\sum_{i=1}^d v_i\otimes e_i$ for some $v_i\in V$.
	Indeed $\sum_k u_k\otimes w_k = \sum_k u_k\otimes (\sum_{i=1}^d w_k^i e_i) = \sum_{i=1}^d (\sum_k w_k^i u_k)\otimes e_i$.
	So if $x=\sum_{i=1}^d v_i\otimes e_i\in V\otimes W$, then
	\[
	b(x,x)
	= b(\sum_{i=1}^d v_i\otimes e_i,\sum_{j=1}^d v_j\otimes e_j)
	= \sum_{i,j=1}^d g(v_i,v_j) h(e_i,e_j)
	= \sum_{i=1}^d g(v_i,v_i)
	\ge 0 ,
	\]
	and clearly $b(x,x)=0$ if and only if $x=0$.
	Therefore, $b$ is positive definite.
\end{proof}

%%%%%%%%%%%%%%%%%%%%%%%%%%%%%%%%%%%%%%%%%%%%%%%%%%%%%%%%%%%%%%%

\section{Examples of Carnot groups of step two}
\label{sec64c3b071}

%%%%%%%%%%%%%%%%%%%%%%%%%%%%%%%%%%%%%%%%%%%%%%%%%%%%%%%%%%%%%%%

In this section we collect several examples of step-two Carnot groups. 
In Section~\ref{se1} we recall the classical example of Heisenberg group. 
Then we give several generalizations of the Heisenberg group in Sections~\ref{se2}-\ref{se4}. 
In particular, in Section~\ref{se2} we give the examples of free step-two groups on which $N_{GEO} > 2Q - n$. 
Then in Section~\ref{se3} we give the main examples of the work: groups on which $N_{CE} > N_{GEO}$.
Finally in Section~\ref{se4} we provide more examples of step-two groups where the $N_{CE}$ can be computed.

%%%%%%%%%%%%%%%%%%%%%%%%%%%%%%%%%%%%%%%%%%%%%%%%%%%%%%%%%%%%%%%
\subsection{The Heisenberg group $\HH$}\label{se1}
Recall that the simplest non-abelian Carnot group is the Heisenberg group $\HH$ whose Lie algebra is given by $\frk g = V_1 \oplus V_2$ with 
\[
V_1 := \Span \{X_1, X_2\}, \qquad V_2 := \Span \{Y\}.
\]
Here $\{X_1, X_2\}$ is an orthonormal basis of $V_1$ and the only nontrivial bracket relation of $\frk g$ is $[X_1, X_2] = Y$. 
The topological dimension of $\HH$ is $n=3$ and the homogeneous dimension is $Q=4$.

By formula~\eqref{eq64c3d1c5}, we obtain that, for $a,b,c,d \in \R$ and $\mu\in V_2$,
\[
(ad - bc) \langle \mu, Y \rangle  = \langle \mu, [a X_1 + b X_2, c X_1 + d X_2] \rangle = \langle J_\mu (a X_1 + b X_2), c X_1 + d X_2 \rangle.
\]
Since $a,b,c,d \in \R$ are arbitrary, the matrix representation of $J_\mu$ with respect to the orthonormal basis $\{X_1, X_2\}$
 is
\begin{equation}\label{eq64c3c777}
J_\mu = \langle \mu, Y \rangle \begin{pmatrix}
0 & -1 \\
1 & 0
\end{pmatrix}.
\end{equation}

As a result, see Table~\ref{tab64db26c4}, it is easy to check with Theorem~\ref{thm63f122b7} that 
\begin{align*}
\Kmin(\xi,\mu) = \begin{cases}
      0 & \text{if $\xi \ne 0$},  \\
		\infty &\text{if $\xi = 0$}.  
		\end{cases}
\end{align*}
By Theorem~\ref{thm6401aa05}, the geodesic dimension of $\HH$ is 5, in accordance with the literature. Furthermore, it follows from the result of \cite{J09} that $N_{CE} = 5$ on $\HH$. See also Proposition \ref{CRCE} in Section \ref{se4}.

\newcolumntype{m}{>{$}c<{$}}
\begin{table}
\label{tab64db26c4}
{\footnotesize
\begin{tabular}{|m||m|m|m|}
\hline
 & \xi = 0 & \xi\neq0,\ \mu=0 & \xi\neq0,\ \mu\neq0 \\
\hline\hline
	 & U^0 = \{0\} & U^0 = \{0\} & U^0 = \{0\} \\
U^\ell	& U^1 = \{0\} & U^1 = \R\xi & U^1 = \R\xi \\
	& U^\ell = \{0\},\ \ell>1 & U^\ell = \R\xi,\ \ell>1 & U^\ell = V_1,\ \ell>1 \\
\hline
 & U_0=V_2 & U_0=V_2 & U_0=V_2 \\
U_\ell	& U_1 = V_2 & U_1 = \{0\} & U_1 = \{0\} \\
	& U_\ell = V_2,\ \ell>1 & U_\ell = \{0\},\ \ell>1 & U_\ell = \{0\},\ \ell>1 \\
\hline
W_\infty & W_\infty = V_2 & W_\infty = \{0\} & W_\infty = \{0\} \\
\hline
W_j,\ j<\infty & W_0 = \{0\} & W_0 = V_2 & W_0 = V_2 \\
	& W_j = \{0\},\ 1\le j<\infty & W_j = \{0\},\ 1\le j<\infty & W_j = \{0\},\ 1\le j<\infty \\
\hline
\Gamma(\xi,\mu) = N_{\sre} & \infty & 0 & 0 \\
\hline
\end{tabular}
}
\medskip 
\caption{We summarize the objects from Definition~\ref{def63f121f2} for the first Heisenberg group $\HH$.}
\end{table}

%%%%%%%%%%%%%%%%%%%%%%%%%%%%%%%%%%%%%%%%%%%%%%%%%%%%%%%%%%%%%%%
\subsection{Free step-two Carnot group with $k$ generators $N_{k,2}$}\label{se2}

One possible generalization of the Heisenberg group $\HH$ is the free step-two group $N_{k,2}$.
For every $k \ge 2$, the Lie algebra of $N_{k,2}$ is $\frk g = V_1 \oplus V_2$ with 
\[
V_1 := \Span \{X_1, \dots, X_k\}, \qquad V_2 := \Span \{Y_{1,2}, Y_{1,3}, \dots, Y_{k-1,k}\}.
\]
Here $\{X_1, \dots, X_k\}$ is an orthonormal basis of $V_1$ with the property $[X_i, X_j] = Y_{i,j}, \forall 1 \le i < j \le k$. For $k = 2$, $N_{2,2}$ is exactly the Heisenberg group. As before,
the formula \eqref{eq64c3d1c5} yields for every $\mu \in V_2$:
\[
\sum_{i < j} (a_i b_j - a_j b_i) \langle \mu, Y_{i,j} \rangle =  \langle \mu, [\sum_{i = 1}^k a_i X_i, \sum_{j = 1}^k b_j X_j] \rangle = \sum_{1 \le i,j \le k} a_i b_j \langle J_\mu X_i,  X_j \rangle
\]
and thus under the orthonormal basis $\{X_1, \dots, X_k\}$
\[
J_\mu = \begin{pmatrix}
0 & -\langle \mu, Y_{1,2} \rangle & -\langle \mu, Y_{1,3} \rangle& \cdots &  -\langle \mu, Y_{1,k} \rangle\\
\langle \mu, Y_{1,2} \rangle & 0  & -\langle \mu, Y_{2,3} \rangle& \cdots &  -\langle \mu, Y_{2,k} \rangle\\
\langle \mu, Y_{1,3} \rangle & \langle \mu, Y_{2,3} \rangle & 0 & \cdots & -\langle \mu, Y_{3,k} \rangle\\
\vdots & \vdots &  \vdots & \ddots & \vdots \\
\langle \mu, Y_{1,k} \rangle & \langle \mu, Y_{2,k} \rangle & \langle \mu, Y_{3,k} \rangle & \cdots & 0
\end{pmatrix}.
\]
Since $\{Y_{1,2}, Y_{1,3}, \dots, Y_{k-1,k}\}$ is a basis, the map $\mu \mapsto J_\mu$ gives a linear isomorphism between $V_2$ and $\so(V_1,\langle \cdot,\cdot \rangle)$. Fix a pair $(\xi,\mu)\in V_1\oplus V_2$. 
Recall that the spaces $U^\ell$, $U_\ell$ and $W_\ell$ are defined in Definition~\ref{def63f121f2}.

\begin{lemma}\label{chafreeWi}
For $(\xi,\mu)\in V_1\oplus V_2$ fixed, 
we have
\[
U_\ell \simeq \so((U^\ell)^\bot,\langle \cdot,\cdot \rangle), \quad
W_\ell \simeq  \so((U^\ell)^\bot,\langle \cdot,\cdot \rangle) / \so((U^{\ell + 1})^\bot,\langle \cdot,\cdot \rangle), \qquad \forall \ell \ge 0.
\]
On the right-hand side, we regard an element of $\so((U^{\ell + 1})^\bot,\langle \cdot,\cdot \rangle)$ as an element of $\so((U^\ell)^\bot,\langle \cdot,\cdot \rangle)$ by zero extension.
\end{lemma}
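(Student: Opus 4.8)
The plan is to transport the entire statement through the linear isomorphism $V_2 \to \so(V_1,\langle\cdot,\cdot\rangle)$, $\nu\mapsto J_\nu$, which is available precisely because $N_{k,2}$ is free (this is the isomorphism recorded just before the lemma). Under this identification the subspace $U_\ell = \{\nu\in V_2 : J_\nu(U^\ell)=\{0\}\}$ becomes $\{A\in\so(V_1) : A|_{U^\ell}=0\}$, so the whole problem reduces to describing the skew-symmetric endomorphisms of $V_1$ that annihilate a prescribed subspace, together with how these families behave as the subspace grows from $U^\ell$ to $U^{\ell+1}$.

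First I would isolate the relevant linear-algebra fact: for any subspace $U\subseteq V_1$, restriction defines an isomorphism $\{A\in\so(V_1) : A|_U=0\} \cong \so(U^\bot)$, whose inverse is extension by zero on $U$. The key point is skew-symmetry: if $A(U)=0$, then for every $v\in U$ and $w\in V_1$ we have $\langle v,Aw\rangle = -\langle Av,w\rangle = 0$, so $A(V_1)\subseteq U^\bot$; in particular $A$ preserves $U^\bot$, restricts there to a skew map, and is zero on $U$. Applying this with $U=U^\ell$ and composing with $\nu\mapsto J_\nu$ yields $U_\ell \cong \so((U^\ell)^\bot)$ for every $\ell\ge0$, uniformly in the degenerate cases $U^0=\{0\}$ and $U^1=\R\xi$. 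This is the first claimed isomorphism.

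For the second isomorphism I would recall that, by Definition~\ref{def63f121f2}, $W_\ell$ is the orthogonal complement of $U_{\ell+1}$ inside $U_\ell$, so that the quotient map restricts to a canonical isomorphism $W_\ell \cong U_\ell/U_{\ell+1}$. Since $U^\ell\subseteq U^{\ell+1}$ gives $(U^{\ell+1})^\bot\subseteq (U^\ell)^\bot$, I then need to check that the identification of the previous step carries the inclusion $U_{\ell+1}\hookrightarrow U_\ell$ exactly onto the zero-extension inclusion $\so((U^{\ell+1})^\bot)\hookrightarrow \so((U^\ell)^\bot)$; passing to quotients afterwards produces $W_\ell \cong \so((U^\ell)^\bot)/\so((U^{\ell+1})^\bot)$, as stated.

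The computations are routine, and the only step requiring care -- which I regard as the main obstacle -- is this compatibility of the two identifications $U_\ell\cong\so((U^\ell)^\bot)$ and $U_{\ell+1}\cong\so((U^{\ell+1})^\bot)$. Concretely, I must verify that a map annihilating $U^{\ell+1}$ is, viewed as a skew map on $(U^\ell)^\bot$, exactly one that additionally vanishes on $U^{\ell+1}\cap(U^\ell)^\bot$. This follows at once from the orthogonal decomposition $U^{\ell+1}=U^\ell\oplus\bigl(U^{\ell+1}\cap(U^\ell)^\bot\bigr)$, which shows that ``zero on $U^{\ell+1}$'' and ``zero on $U^\ell$ together with zero on $U^{\ell+1}\cap(U^\ell)^\bot$'' are the same condition, making the quotient well-defined and matching the intended zero extension.
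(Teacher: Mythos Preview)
Your proposal is correct and follows essentially the same approach as the paper: both use the isomorphism $\nu\mapsto J_\nu$ from $V_2$ onto $\so(V_1)$, identify $U_\ell$ with $\so((U^\ell)^\bot)$ via restriction/zero-extension, and then pass to the quotient using the compatibility of these identifications with the inclusions $U_{\ell+1}\hookrightarrow U_\ell$. The paper records this compatibility tersely via a commutative diagram, whereas you spell out the underlying orthogonal decomposition $U^{\ell+1}=U^\ell\oplus(U^{\ell+1}\cap(U^\ell)^\bot)$ explicitly; the content is the same.
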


\begin{proof}
As in Definition~\ref{def63f121f2}, $U_\ell := \{\nu \in V_2 : J_\nu (U^\ell) = \{0\}\}, \forall \ell \ge 0$. 
It is easy to see that if $\nu \in U_\ell$, then $J_\nu$ maps $(U^\ell)^\bot$ to itself, or equivalently $J_\nu|_{(U^\ell)^\bot} \in \so((U^\ell)^\bot,\langle \cdot,\cdot \rangle)$.
Conversely, if we start from an element in $ \so((U^\ell)^\bot,\langle \cdot,\cdot \rangle)$, by zero extension we obtain a unique element $\nu \in U_\ell$. This gives us the isomorphism between $U_{\ell}$ and $\so((U^{\ell})^\bot,\langle \cdot,\cdot \rangle)$. Then the rest of the lemma follows from the following commutative diagram:
\[\begin{tikzcd}
	{U_{\ell + 1}} & {\so((U^{\ell + 1})^\bot,\langle \cdot,\cdot \rangle)} \\
	{U_{\ell}} & {\so((U^\ell)^\bot,\langle \cdot,\cdot \rangle)}
	\arrow["\simeq", from=1-1, to=1-2]
	\arrow[from=1-1, to=2-1]
	\arrow["\simeq", from=2-1, to=2-2]
	\arrow[from=1-2, to=2-2]
\end{tikzcd}\]
where the map from $\so((U^{\ell + 1})^\bot,\langle \cdot,\cdot \rangle)$ to $\so((U^\ell)^\bot,\langle \cdot,\cdot \rangle)$ is given by zero extension.
\end{proof}

\begin{lemma}\label{chafreeinWi}
Fix a pair $(\xi,\mu)\in V_1\oplus V_2$. If $U^\ell = U^{\ell + 1}$, then $U^j = U^\ell$ for every $j \ge \ell$.
\end{lemma}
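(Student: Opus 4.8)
The plan is to exploit the recursive structure of the spaces $U^\ell$. By Definition~\ref{def63f121f2}, these satisfy the uniform recursion
\[
U^{\ell+1} = U^\ell + \R\, J_\mu^\ell\xi , \qquad \ell\ge 0 ,
\]
with the convention $J_\mu^0 = \Id$ (one checks directly that the edge cases $U^0=\{0\}$ and $U^1=\R\xi$ are consistent with this formula). In particular, the sequence $(U^\ell)_\ell$ is increasing, and the hypothesis $U^\ell = U^{\ell+1}$ is equivalent to the single membership $J_\mu^\ell\xi\in U^\ell$.

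The key observation is that $J_\mu$ always maps $U^\ell$ into $U^{\ell+1}$: applying $J_\mu$ to a spanning vector $J_\mu^i\xi$ with $0\le i\le\ell-1$ produces $J_\mu^{i+1}\xi$, which lies in $\Span\{J_\mu\xi,\dots,J_\mu^\ell\xi\}\subseteq U^{\ell+1}$. Hence, under the hypothesis $U^\ell=U^{\ell+1}$, we have $J_\mu(U^\ell)\subseteq U^{\ell+1}=U^\ell$, so that $U^\ell$ is $J_\mu$-invariant.

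Once this invariance is established, the conclusion is immediate. Since $\xi\in U^\ell$ and $U^\ell$ is $J_\mu$-invariant, every iterate $J_\mu^j\xi$ belongs to $U^\ell$, whence $U^j\subseteq U^\ell$ for all $j\ge\ell$. The reverse inclusion $U^\ell\subseteq U^j$ holds because the sequence is increasing, and we conclude $U^j=U^\ell$ for every $j\ge\ell$.

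This statement is a standard fact about Krylov-type (cyclic) subspaces, so I do not anticipate any serious obstacle; the entire content lies in recognizing the $J_\mu$-invariance of $U^\ell$ at the moment of stabilization. The only point requiring a little care is the bookkeeping of the low-index cases $\ell=0,1$ in Definition~\ref{def63f121f2}, which must be reconciled with the uniform recursion above, but this is routine.
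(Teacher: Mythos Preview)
Your proof is correct and follows essentially the same approach as the paper: both arguments rest on the observation that $J_\mu^\ell\xi\in U^\ell$ forces $J_\mu^{\ell+1}\xi\in U^\ell$ by applying $J_\mu$. The paper phrases this as an explicit linear combination and proceeds by induction on $j$, while you package it as $J_\mu$-invariance of $U^\ell$ and conclude in one stroke; the mathematical content is the same.
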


\begin{proof}
If $U^\ell = U^{\ell + 1}$, then $J_\mu^\ell \xi = \sum_{i = 0}^{\ell - 1} a_i J_\mu^i \xi$ for some $a_i \in \R$. Therefore, $J_\mu^{\ell + 1} \xi = \sum_{i = 0}^{\ell - 1} a_i J_\mu^{i + 1} \xi \in U^{\ell + 1} = U^\ell$, i.e., $U^{\ell + 2} = U^\ell$. By induction, we obtain the lemma.
\end{proof}

\begin{proposition}
On free step-two group with $k$ generators $N_{k,2}$ with $k \ge 3$, we have 
\[2Q - n = \frac{3k^2 - k}{2} < \frac{3k^2 - k}{2} + \frac{k(k - 1)(k - 2)}{3} = N_{GEO} \le N_{CE}. \]
\end{proposition}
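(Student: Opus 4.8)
The plan is to combine the dimension count with Theorem~\ref{thm6401aa05}, reducing everything to the computation of $\Kmin(G)$. First I would record the two dimensions: since $\dim V_1 = k$ and $\dim V_2 = \binom{k}{2}$, the topological dimension is $n = k + \binom{k}{2} = \frac{k^2+k}{2}$ and the homogeneous dimension is $Q = k + 2\binom{k}{2} = k^2$. Hence $2Q - n = 2k^2 - \frac{k^2+k}{2} = \frac{3k^2 - k}{2}$, which is the leftmost equality. By Theorem~\ref{thm6401aa05} we have $N_{GEO} = 2Q - n + \Kmin(G)$, so it remains to prove $\Kmin(G) = \frac{k(k-1)(k-2)}{3}$; the inequality $N_{GEO} \le N_{CE}$ is then immediate from Theorem~\ref{thm6401b142}, and the strict inequality $2Q - n < N_{GEO}$ follows because $k(k-1)(k-2) > 0$ for $k \ge 3$.

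To compute $\Kmin(G)$ I would exploit that $N_{k,2}$ is free, so $\mu \mapsto J_\mu$ is a linear isomorphism $V_2 \to \so(V_1)$, together with the identity $\Kmin(\xi,\mu) = N_\sre(\xi,\mu) = 2\sum_{\ell} \ell\,\dim(W_\ell)$ from Theorem~\ref{thm63f122b7}. Fix $(\xi,\mu)$ and write $d_\ell := \dim(U^\ell)$. Since $U^\ell = \Span\{\xi, J_\mu\xi, \dots, J_\mu^{\ell-1}\xi\}$ is a Krylov subspace, Lemma~\ref{chafreeinWi} shows that $d_\ell$ grows by exactly one at each step until it reaches the dimension of the cyclic subspace $U^\infty := \Span\{J_\mu^\ell\xi : \ell \ge 0\}$ and then stabilizes; that is, $d_\ell = \min(\ell, \dim U^\infty)$. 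Lemma~\ref{chafreeWi} then gives $\dim W_\ell = \binom{k - d_\ell}{2} - \binom{k - d_{\ell+1}}{2}$, and applying it for large $\ell$ identifies $W_\infty \cong \so((U^\infty)^\bot)$.

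The finiteness of $\Kmin(\xi,\mu)$ is governed by $W_\infty$: since $\so(m) = 0$ iff $m \le 1$, I obtain $W_\infty = 0$ (equivalently $\Kmin(\xi,\mu) < \infty$) exactly when $\dim U^\infty \ge k-1$. In either admissible case, $\dim U^\infty \in \{k-1, k\}$, one has $d_\ell = \ell$ for $0 \le \ell \le k-2$, hence $\dim W_\ell = \binom{k-\ell}{2} - \binom{k-\ell-1}{2} = k - 1 - \ell$ for $0 \le \ell \le k-2$, while $\dim W_{k-1} = 0$ in both cases (as $\binom{1}{2} = \binom{0}{2} = 0$) and $W_\ell = 0$ for $\ell \ge k$. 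Thus $N_\sre$ takes the single finite value
\[
N_\sre(\xi,\mu) = 2 \sum_{\ell=0}^{k-2} \ell\,(k-1-\ell) = 2 \cdot \frac{(k-2)(k-1)k}{6} = \frac{k(k-1)(k-2)}{3}
\]
independently of which admissible pair $(\xi,\mu)$ we chose. Choosing $\mu$ with $J_\mu \in \so(V_1)$ regular (possible through the isomorphism $V_2 \cong \so(V_1)$) and $\xi$ a cyclic vector shows this value is attained, whence $\Kmin(G) = \frac{k(k-1)(k-2)}{3}$ and the proof is complete.

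The main obstacle I anticipate is the finiteness analysis: one must notice that $\dim U^\infty = k-1$ is admissible alongside $\dim U^\infty = k$ — precisely because $\so$ of a one-dimensional space vanishes — and verify that both admissible cases produce exactly the same $W_\ell$-profile, so that the minimum defining $\Kmin(G)$ is a genuine constant over the whole finite locus rather than an optimization problem. The remaining work, namely the binomial telescoping and the closed-form evaluation of $\sum_{\ell=0}^{k-2}\ell(k-1-\ell)$, is routine.
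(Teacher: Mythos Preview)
Your proof is correct and follows essentially the same approach as the paper: both reduce to computing $N_\sre(\xi,\mu)$ via Lemmas~\ref{chafreeWi} and~\ref{chafreeinWi}, observe that $W_\infty=\{0\}$ forces $\dim U^\ell=\ell$ for $\ell\le k-1$, and then evaluate $2\sum_\ell \ell\dim W_\ell$. Your write-up is in fact slightly more complete, since you explicitly verify that the finite value is attained (by choosing $J_\mu$ regular and $\xi$ cyclic) and note that $N_\sre$ is constant on the entire finite locus, whereas the paper leaves these points implicit.
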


\begin{proof}
For every $\ell\ge0$, if $U^\ell = U^{\ell + 1}$, then 
$W_\infty = U_\ell \simeq \so((U^\ell)^\bot,\langle \cdot,\cdot \rangle)$,
by Lemmas~\ref{chafreeWi} and~\ref{chafreeinWi}.
Since $\dim (U^{\ell}) \le \ell$, 
if there is $\ell < k - 1$ with $U^\ell = U^{\ell + 1}$, then 
$W_\infty\neq\{0\}$. 

Hence, if  $W_\infty = \{0\}$, then
$\dim (U^{\ell}) = \ell$ for all $\ell < k$,
and $\dim(U^\ell)\in\{k,k-1\}$ for all $\ell \ge k$.
It follows that $\dim(W_{\ell})=0$ for $\ell\ge k-1$
and, for $0 \le \ell < k - 1$,

\begin{align*}
&\dim(W_\ell) = \dim(\so((U^\ell)^\bot,\langle \cdot,\cdot \rangle)) - \dim(\so((U^{\ell + 1})^\bot,\langle \cdot,\cdot \rangle)) \\
=& \frac{(k - \ell)(k - \ell - 1)}{2} -  \frac{(k - \ell - 1)(k - \ell - 2)}{2} = k - \ell -1, 
\end{align*}
and thus $d = k - 2$.
As a result, by \eqref{eq63ff0371} and Theorem \ref{thm6401aa05},
\begin{align*}
N_{GEO} = \frac{3k^2 - k}{2} + \sum_{\ell = 1}^{k - 2} 2\ell (k - \ell -1)=  \frac{3k^2 - k}{2} + \frac{k(k - 1)(k - 2)}{3}.
\end{align*}
\end{proof}

%%%%%%%%%%%%%%%%%%%%%%%%%%%%%%%%%%%%%%%%%%%%%%%%%%%%%%%%%%%%%%%
\subsection{Step-two groups induced by star graphs $K_{1,k}$}\label{se3}
In \cite{DCDMM18} the following step-two stratified Lie algebras are associated to star-shaped graphs.
For every $k \ge 1$, the Lie algebra is given by $\frk g = V_1 \oplus V_2$ with 
\[
V_1 := \Span \{X_0, X_1 \dots, X_k\}, \qquad V_2 := \Span \{Y_1,  \dots, Y_k\} ,
\]
where the nontrivial bracket relations are $[X_0, X_j] = Y_j$, for all $1 \le j \le k$. 
We fix a scalar product on $\frk g$ such that 
 $\{X_0, X_1, \dots, X_k,Y_1,  \dots, Y_k\}$ is an orthonormal basis.
We remark that $k = 1$ case corresponds to the Heisenberg group $\HH$. 

By formula \eqref{eq64c3d1c5}, if $\sum_{i=0}^k a_i X_i , \sum_{j=0}^k b_j X_j\in V_1$ and $\mu\in V_2$, then 
\[
\sum_{j=1}^k (a_0b_j-a_jb_0) \langle \mu,Y_j \rangle
= \langle \mu , [\sum_{i=0}^k a_i X_i , \sum_{j=0}^k b_j X_j ]  \rangle
= \sum_{0 \le i,j \le k} a_ib_j \langle J_\mu X_i, X_j \rangle .
\]
Therefore, for every $\mu\in V_2$ we have
\[
J_\mu = \begin{pmatrix}
0 & -\langle \mu, Y_{1} \rangle & \cdots &  -\langle \mu, Y_{k} \rangle\\
\langle \mu, Y_{1} \rangle & & & \\
\vdots & & 0 & \\
\langle \mu, Y_{k} \rangle & & &
\end{pmatrix}
\]
with respect to the orthonormal basis $\{X_0,X_1, \dots, X_k\}$.
The following proposition answers a question posed by Rizzi in~\cite{MR3502622},
that is, it shows that there are sub-Riemannian Carnot groups such that $N_{GEO} \neq N_{CE}$.
 
\begin{proposition}
In the framework of step-two groups induced by star graphs $K_{1,k}$ with $k \ge 2$, the following is true:
\begin{equation}\label{eq64970cf5}
\Kmin(K_{1,k}) = 0 < 2k - 2 = \Kmax(K_{1,k}).
\end{equation}
In particular, we have $N_{GEO} < N_{CE}$ in $K_{1,k}$ with $k \ge 2$
\end{proposition}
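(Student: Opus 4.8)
The plan is to reduce everything to the step-two machinery of Section~\ref{sec6496dd08}: by Theorem~\ref{thm63f122b7} we have $\Kmin(\xi,\mu)=N_\sre(\xi,\mu)$, so it suffices to determine \emph{all} finite values of $N_\sre(\xi,\mu)$ as $(\xi,\mu)$ ranges over $V_1\oplus V_2$, take their minimum to obtain $\Kmin(K_{1,k})$ and their supremum to obtain $\Kmax(K_{1,k})$, and finally feed these into Theorems~\ref{thm6401aa05} and~\ref{thm6419896b}. I first record the dimensions $n=2k+1$ and $Q=3k+1$, so that $2Q-n=4k+1$.

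Next I would make the action of $J_\mu$ explicit. Writing $W:=\Span\{X_1,\dots,X_k\}$ and $m:=\sum_{j=1}^k\langle\mu,Y_j\rangle X_j\in W$, the displayed form of $J_\mu$ gives $J_\mu X_0=m$ and $J_\mu w=-\langle m,w\rangle X_0$ for every $w\in W$. Consequently the plane $\Span\{X_0,m\}$ is $J_\mu$-invariant with $J_\mu^2=-|m|^2\,\Id$ on it, so $J_\mu^\ell\xi\in\Span\{X_0,m\}$ for every $\ell\ge1$. The decisive structural remark is that, for $\nu\neq0$, one has $\ker J_\nu=\{w\in W:\langle n,w\rangle=0\}\subset W$ with $n:=\sum_j\langle\nu,Y_j\rangle X_j$; hence the space $U_\ell=\{\nu:J_\nu(U^\ell)=\{0\}\}$ of Definition~\ref{def63f121f2} is nontrivial only when $U^\ell\subseteq W$.

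The heart of the argument is then a short case analysis organized by this last remark. If $\xi$ has a nonzero $X_0$-component, then $U^1=\R\xi\not\subseteq W$, so $U_1=\{0\}$, forcing $W_0=V_2$ and $N_\sre(\xi,\mu)=0$. If $\xi=w_0\in W\setminus\{0\}$ and $\mu\neq0$, then $J_\mu w_0=-\langle m,w_0\rangle X_0$, and two things can happen. When $\langle m,w_0\rangle=0$ the entire $J_\mu$-orbit of $\xi$ equals $\R w_0$, so the condition defining $W_\infty$ reduces to $J_\nu w_0=0$, which cuts out a $(k-1)$-dimensional subspace; hence $W_\infty\neq\{0\}$ and $N_\sre=\infty$. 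When $\langle m,w_0\rangle\neq0$ we instead get $U^1=\R w_0\subseteq W$ while $U^2=\Span\{w_0,X_0\}\not\subseteq W$, so $\dim U_1=k-1$, $U_2=\{0\}$, and $W_\infty=\{0\}$; therefore $\dim W_0=1$, $\dim W_1=k-1$, and $N_\sre(\xi,\mu)=2(k-1)=2k-2$. The remaining degenerate cases ($\xi=0$, or $\mu=0$ with $\xi\in W$) all give $W_\infty\neq\{0\}$, hence $N_\sre=\infty$. Thus the finite values of $\Kmin(\xi,\mu)=N_\sre(\xi,\mu)$ are exactly $\{0,2k-2\}$, so $\Kmin(K_{1,k})=0$ and $\Kmax(K_{1,k})=2k-2$.

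Finally I would combine this with the main theorems: Theorem~\ref{thm6401aa05} gives $N_{GEO}=2Q-n+\Kmin(K_{1,k})=4k+1$, while Theorem~\ref{thm6419896b} gives $N_{CE}\ge 2Q-n+\Kmax(K_{1,k})=6k-1$; since $6k-1>4k+1$ for $k\ge2$, we conclude $N_{GEO}<N_{CE}$. I expect the main obstacle to be the bookkeeping in the case analysis—specifically, verifying carefully that $U^2\not\subseteq W$ (hence $U_2=\{0\}$) in the decisive case, and that no intermediate orbit behaviour yields a finite value strictly between $0$ and $2k-2$. The rapid collapse of every orbit into the plane $\Span\{X_0,m\}$ is exactly what rules this out and keeps the analysis finite.
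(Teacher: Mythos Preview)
Your proof is correct and follows essentially the same approach as the paper: both arguments compute $\Kmin(\xi,\mu)=N_\sre(\xi,\mu)$ via Theorem~\ref{thm63f122b7} through the same three-case split (nonzero $X_0$-component; $\xi\in W$ with $\langle m,\xi\rangle\neq0$; the degenerate remainder), obtaining the finite values $\{0,2k-2\}$ and then invoking Theorems~\ref{thm6401aa05} and~\ref{thm6419896b}. Your write-up is slightly more structured in isolating the key fact $\ker J_\nu\subset W$ for $\nu\neq0$ and in spelling out the numerical comparison $N_{GEO}=4k+1<6k-1\le N_{CE}$, but the underlying argument is the same.
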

\begin{proof}
	Using the above bases for $V_1$ and $V_2$, we write $\xi = (\xi_0,\hat\xi)$ with $\hat\xi \in\R^k$,
	and $\mu\in\R^k$.
	Then
	\[
	J_\mu\xi = (-\mu\cdot\hat\xi , \xi_0\mu) \in V_1,
	\]
	where $\cdot$ denotes the standard scalar product on $\R^k$. To compute $\Kmin(\xi,\mu)$, we consider three cases.
	First, if $\xi_0\neq0$, then $U_1=\{\nu \in V_2 :  J_\nu\xi = 0\} = \{0\}$.
	This implies $W_\infty = \{0\}$, $W_0=V_2$ and $d = 0$.
	
	Second, consider the case $\xi_0 = 0$ and $\mu \cdot \hat\xi \neq 0$. 
	For a similar reason we have
	that $U_1$ has codimension 1 in $V_2$ and
	$U_2 = \{\nu \in V_2 : J_\nu J_\mu \xi=  J_\nu\xi = 0\} = \{0\}$.
	This implies $W_\infty = \{0\}$ as  well, but in this case $d = 1$, 
	$\dim(W_0)=1$, and $\dim(W_1) = k -1$.
	
	In the remaining third case,
	when $\xi_0 = 0$ and $\mu \cdot \hat\xi = 0$,
	we have $J_\mu \xi = 0$ and thus $W_\infty \ne \{0\}$.
	 
	In conclusion, from Theorem~\ref{thm63f122b7} we obtain
	\begin{align*}
	\Kmin(\xi,\mu) = \begin{cases}
	      0 & \text{if $\xi_0 \ne 0$},  \\
	      2k -2 &\text{if $\xi_0 = 0$ and $\mu \cdot \hat\xi \ne 0$}, \\
			\infty &\text{if $\xi_0 = 0$ and $\mu \cdot \hat\xi = 0$}.
			\end{cases}
	\end{align*}
	This implies~\eqref{eq64970cf5}, and the we conclude by
	Theorems \ref{thm6401aa05} and \ref{thm6419896b}.
\end{proof}

%%%%%%%%%%%%%%%%%%%%%%%%%%%%%%%%%%%%%%%%%%%%%%%%%%%%%%%%%%%%%%%
\subsection{Step-two groups $G_A$}\label{se4}

In this section we introduce a subclass of step-two groups which are again generalizations of Heisenberg group $\HH$ but not ideal Carnot groups except for very special cases.  Given a matrix of full-rank $A = A_{m \times k} = (A_{ij})_{1 \le i \le m, 1 \le j \le k}$ with $m \le k$, the Lie algebra of $G_A$ is given by $\frk g = V_1 \oplus V_2$ with 
\[
V_1 := \Span \{X_1, X_2, \dots, X_{2k - 1}, X_{2k}\}, \qquad V_2 := \Span \{Y_1,  \dots, Y_m\},
\]
where the nontrivial relations of the Lie algebra $\frk g$ are $[X_{2j - 1}, X_{2j}] = \sum_{i = 1}^m A_{ij} Y_{i}, \forall 1 \le j \le k$. The topological dimension of $G_A$ is $n= 2k + m$ and the homogeneous dimension is $Q= 2k + 2m$. This subclass of step-two groups is associated to CR manifolds, see \cite{NRS01}. We remark that for the case $m = k = 1$ and $A = 1$, it is nothing but the Heisenberg group $\HH$.

We fix a scalar product on $\frk g$ such that  $\{X_1, X_2, \dots, X_{2k - 1}, X_{2k}, Y_1,  \dots, Y_m\}$ is an orthonormal basis. In the following, using the basis above we write $\xi = (\xi_1, \dots, \xi_{2k}) \in \R^{2k}$ and $\mu = (\mu_1, \dots, \mu_m) \in \R^m$. Moreover, we use to denote $\cdot$ the standard scalar product on $\R^m$ and $A_j := (A_{1j}, \dots, A_{mj})$ the $j$-th column of the matrix $A$. Using these notations, and by formula~\eqref{eq64c3d1c5} again, we have
\[
J_\mu = \diag \left\{ \begin{pmatrix} 0 & - \mu \cdot A_1  \\
\mu \cdot A_1  & 0 \end{pmatrix}, \ldots, \begin{pmatrix} 0 & - \mu \cdot A_k \\
\mu \cdot A_k  & 0 \end{pmatrix} \right\}.
\]

Observe that from the matrix above, we have $J_\nu J_\mu = J_\mu J_\nu$ and as a consequence $W_\infty = \{\nu \in V_2 : J_\nu \xi = 0\}$. Thus we obtain
\begin{align*}
\Kmin(\xi,\mu) = \begin{cases}
      0 & \text{if $\{\nu \in V_2 : J_\nu \xi = 0\} = \{0\}$},  \\
		\infty &\text{if $\{\nu \in V_2 : J_\nu \xi = 0\} \ne \{0\}$},  
		\end{cases}
\end{align*}
which implies $N_{GEO} = 2Q - n = 2k + 3m$ by Theorem~\ref{thm6401aa05}.

Now we compute the explicit formula of the Jacobian of the sub-Riemannian exponential map with respect to the basis $\{X_1, \dots, X_{2k}, Y_1, \dots , Y_m\}$, and we use this formula to compute the curvature exponent. For the computation on Heisenberg groups, we refer to \cite{J09, BKS18}.
In fact, using the matrix of $J_\mu$ above, and deducing from Lemma \ref{lem63d454ee} again, the sub-Riemannian exponential map is represented by
\[
\sre(\xi, \mu) = \begin{pmatrix}
\E_1(\xi_1,\xi_2, \mu \cdot A_1) \\
 \vdots \\
\E_1(\xi_{2k - 1},\xi_{2k}, \mu \cdot A_k) \\
A \begin{pmatrix}
\E_2(\xi_1,\xi_2, \mu \cdot A_1) \\
 \vdots \\
\E_2(\xi_{2k - 1},\xi_{2k}, \mu \cdot A_k) 
\end{pmatrix}
\end{pmatrix},
\]
where
\[
\E_1(v_1, v_2, \nu) := \begin{pmatrix}
\frac{\sin{\nu}}{\nu} v_1 - \frac{1 - \cos{\nu}}{\nu} v_2 \\
\frac{1 - \cos{\nu}}{\nu} v_1 + \frac{\sin{\nu}}{\nu} v_2 
\end{pmatrix}
\quad \mbox{and} \quad 
\E_2(v_1, v_2, \nu) := \frac{1}{2} \frac{\nu - \sin{\nu}}{\nu^2} (v_1^2 + v_2^2).
\]

Define
\begin{align*}
\wt \J_{1,1} (\mu) := \diag \{   \J_{1,1}(\mu \cdot A_1), \dots,  \J_{1,1}(\mu \cdot A_k) \},& \\
\wt  \J_{i,j} (\xi, \mu) := \diag \{  \J_{i,j}(\xi_1, \xi_2, \mu \cdot A_1), \dots,   \J_{i,j}(\xi_{2k - 1}, \xi_{2k}, \mu \cdot A_k) \},& \ \forall  (i,j) \ne (1,1)
\end{align*}
with
\begin{align*} 
\J_{1,1}(\nu) &:= \begin{pmatrix} 
\frac{\sin{\nu}}{\nu} & - \frac{1 - \cos{\nu}}{\nu} \\
\frac{1 - \cos{\nu}}{\nu} & \frac{\sin{\nu}}{\nu}
\end{pmatrix}, \\
\J_{1,2}(v_1, v_2, \nu) &:= \begin{pmatrix}  
\frac{\nu \cos{\nu} - \sin{\nu}}{\nu^2} v_1 - \frac{\nu \sin{\nu} - 1 + \cos{\nu}}{\nu^2} v_2 \\
\frac{\nu \sin{\nu} - 1 + \cos{\nu}}{\nu^2} v_1 + \frac{\nu \cos{\nu} - \sin{\nu}}{\nu^2} v_2
\end{pmatrix}, \\
\J_{2,1}(v_1, v_2, \nu) &:=  \begin{pmatrix}  \frac{\nu - \sin{\nu}}{\nu^2} v_1 &  \frac{\nu - \sin{\nu}}{\nu^2} v_2 \end{pmatrix}, \\
\J_{2,2}(v_1, v_2, \nu) &:= \frac{1}{2} \frac{2\sin{\nu} - \nu - \nu \cos{\nu}}{\nu^3} (v_1^2 + v_2^2).
\end{align*}

Then the differential is presented by
\[
D\sre (\xi, \mu) = \begin{pmatrix}
\wt \J_{1,1} (\mu) & \wt \J_{1,2} (\xi, \mu) A^T \\
A \wt \J_{2,1} (\xi, \mu) & A \wt \J_{2,2} (\xi, \mu) A^T
\end{pmatrix},
\]
where $T$ denotes the transpose of the matrix.
In the computation below we assume $\wt \J_{1,1} (\mu)$ is invertible and the final formula \eqref{JacCr} holds for all $(\xi,\mu)$ by continuity. 
Then Lemma \ref{lem12181152} gives
\[
\Jac(\sre)(\xi, \mu) =  \det (A \wt \J(\xi, \mu) A^{T}) \prod_{j = 1}^k \det(\J_{1,1}(\mu \cdot A_j)), 
\]
where
\[
\wt \J(\xi, \mu) := \diag \{  \J(\xi_1, \xi_2, \mu \cdot A_1), \dots, \J(\xi_{2k - 1}, \xi_{2k}, \mu \cdot A_k)  \} 
\]
with
\begin{align*}
\J(v_1, v_2, \nu) :=& \J_{2,2}(v_1, v_2, \nu) - \J_{2,1}(v_1, v_2, \nu)\J_{1,1}(\nu)^{-1} \J_{1,2}(v_1, v_2, \nu). 
\end{align*}

In fact, we can write down the explicit formulas for $\J(\cdot)$ and $\det(\J_{1,1}(\cdot))$:
\begin{align*} 
\J(v_1, v_2, \nu)  =    \frac{f_1(\frac{\nu}{2})}{4 f_2(\frac{\nu}{2})}(v_1^2 + v_2^2),  \quad \mbox{and} \quad
 \det(\J_{1,1}(\nu)) = f_2\left(\frac{\nu}{2}\right)^2
\end{align*}
with two auxiliary functions $f_1$ and $f_2$ defined by
\begin{align}\label{deff12}
f_1(s) := \frac{\sin{s} - s \cos{s}}{s^3}, \qquad f_2(s) := \frac{\sin{s}}{s}.
\end{align}

Now we need the following lemma from Linear Algebra.

\begin{lemma}[Cauchy--Binet formula]\label{cbfor}
	Let $A = (A_1, \dots, A_k)$ be an $m \times k$ matrix and $B = \begin{pmatrix} B_1 \\ \vdots \\ B_k \end{pmatrix}$ a $k \times m$ matrix with $m \le k$. Then  
	\[
	\det (AB) = \sum_{1 \le i_1 < \dots < i_m \le k} \det(A_{i_1}, \dots, A_{i_m}) \det(B_{i_1}^T, \dots, B_{i_m}^T).
	\]
\end{lemma}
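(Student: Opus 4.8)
The plan is to prove the identity directly from the multilinearity and antisymmetry of the determinant in its columns, which is the cleanest route for a formula of this combinatorial shape. First I would record that the $q$-th column of the product $AB$ is $A$ applied to the $q$-th column of $B$, hence a linear combination of the columns of $A$:
\[
(AB)_{\bullet q} = \sum_{j=1}^k B_{jq}\, A_j ,
\]
where $A_j$ is the $j$-th column of $A$ and $B_{jq}$ is the $(j,q)$-entry of $B$.

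Next I would expand $\det(AB) = \det\big((AB)_{\bullet 1}, \dots, (AB)_{\bullet m}\big)$ using multilinearity in each of the $m$ columns. This produces a sum over all $k^m$ index tuples $(j_1,\dots,j_m)\in\{1,\dots,k\}^m$:
\[
\det(AB) = \sum_{j_1,\dots,j_m=1}^k B_{j_1 1} B_{j_2 2} \cdots B_{j_m m}\, \det(A_{j_1}, \dots, A_{j_m}) .
\]
Every tuple with a repeated index contributes a determinant with two equal columns, hence zero, so only the injective tuples survive. Each injective tuple determines a unique increasing sequence $1 \le i_1 < \dots < i_m \le k$ together with a permutation $\sigma\in S_m$ for which $j_p = i_{\sigma(p)}$.

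I would then regroup the surviving terms by their underlying set $\{i_1 < \dots < i_m\}$ and pull $\det(A_{i_1},\dots,A_{i_m})$ out using antisymmetry, $\det(A_{i_{\sigma(1)}},\dots,A_{i_{\sigma(m)}}) = \operatorname{sgn}(\sigma)\det(A_{i_1},\dots,A_{i_m})$:
\[
\det(AB) = \sum_{1 \le i_1 < \dots < i_m \le k} \det(A_{i_1},\dots,A_{i_m}) \sum_{\sigma \in S_m} \operatorname{sgn}(\sigma)\, B_{i_{\sigma(1)} 1} \cdots B_{i_{\sigma(m)} m} .
\]
Finally I would identify the inner signed sum with the determinant of the $m \times m$ minor of $B$ on the rows $i_1,\dots,i_m$: substituting $\rho = \sigma^{-1}$ and reindexing the product turns it into $\sum_{\rho\in S_m}\operatorname{sgn}(\rho)\prod_{q=1}^m B_{i_q \rho(q)} = \det(B_{i_1}^T,\dots,B_{i_m}^T)$, which is exactly the claimed factor. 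Collecting everything yields the Cauchy--Binet formula.

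The argument has no real obstacle; the only point demanding care is the permutation bookkeeping in the regrouping step — matching the residual signed sum over $S_m$ to the determinant of the $B$-minor through the $\rho=\sigma^{-1}$ substitution, and checking that $\operatorname{sgn}(\sigma)=\operatorname{sgn}(\rho)$ together with the correct reindexing of the product. This is routine, and the hypothesis $m \le k$ enters only to guarantee that injective tuples (equivalently, $m$-element subsets of $\{1,\dots,k\}$) exist, so that the right-hand sum is non-empty.
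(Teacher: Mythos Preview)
Your proof is correct and is the standard multilinearity/antisymmetry argument for Cauchy--Binet; the permutation bookkeeping via $\rho=\sigma^{-1}$ is handled properly. Note, however, that the paper does not actually prove this lemma: it is simply stated as a known result from Linear Algebra and then applied, so there is no ``paper's own proof'' to compare against.
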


Applying Lemma~\ref{cbfor} to $B = \wt \J(\xi, \mu) A^{T}$, we obtain
\begin{align} \label{JacCr}
\Jac(\sre)(\xi, \mu) = 4^{-m} \sum_{1 \le i_1 < \dots < i_m \le k} \det(A_{i_1}, \dots, A_{i_m})^2 \J^{i_1, \dots, i_m}(\xi, \mu),
\end{align}
where $\J^{i_1, \dots, i_m}(\xi, \mu)$ is defined by
\[
 \prod_{j \notin \{i_1, \dots, i_m\} } f_2\left(\frac{\mu \cdot A_j}{2}\right)^2  \prod_{j \in \{i_1, \dots, i_m\} } f_1\left(\frac{\mu \cdot A_j}{2}\right)f_2\left(\frac{\mu \cdot A_j}{2}\right) (\xi_{2j - 1}^2 + \xi_{2j}^2)   .
\]

\begin{lemma}\label{chaD}
On step-two groups $G_A$, under the basis $\{X_1, \dots, X_{2k}, Y_1, \dots , Y_m\}$, the set $\scr D$ in Definition \ref{def6401a951} satisfies
\[
\scr D \subset \R^{2k} \times \{\mu : |\mu \cdot A_j | < 2 \pi, \forall 1 \le j \le k\}.
\]
\end{lemma}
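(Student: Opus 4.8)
The plan is to prove the inclusion in its contrapositive form, working entirely at the level of the Jacobian. I will show that if $|\mu\cdot A_{j_0}|\ge 2\pi$ for some index $j_0$, then there is a time $t\in(0,1]$ with $\Jac(\sre)(t\xi,t\mu)=0$, which by Definition~\ref{def6401a951} forces $(\xi,\mu)\notin\scr D$. The only input I need is the explicit formula~\eqref{JacCr} (valid for all covectors by the continuity remark preceding it), together with the observation that $f_2(s)=\frac{\sin s}{s}$ from~\eqref{deff12} vanishes exactly on $\pi\Z\setminus\{0\}$, in particular at $s=\pm\pi$.

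First I would isolate the structural feature of~\eqref{JacCr} that drives everything: in each summand $\J^{i_1,\dots,i_m}(\xi,\mu)$, the factor attached to a fixed index $j$ is either $f_2(\frac{\mu\cdot A_j}{2})^2$ (when $j\notin\{i_1,\dots,i_m\}$) or $f_1(\frac{\mu\cdot A_j}{2})\,f_2(\frac{\mu\cdot A_j}{2})\,(\xi_{2j-1}^2+\xi_{2j}^2)$ (when $j\in\{i_1,\dots,i_m\}$). In both cases this factor carries $f_2(\frac{\mu\cdot A_j}{2})$. Consequently, if $f_2(\frac{\mu\cdot A_{j_0}}{2})=0$ for a single $j_0$, then every summand of~\eqref{JacCr} vanishes, irrespective of the coefficients $\det(A_{i_1},\dots,A_{i_m})^2$ and of the remaining factors, so $\Jac(\sre)(\xi,\mu)=0$.

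Next I would exploit the rescaling freedom. Assume $|\mu\cdot A_{j_0}|\ge 2\pi$ and set $t_0:=\frac{2\pi}{|\mu\cdot A_{j_0}|}\in(0,1]$. Since $\mu\mapsto\mu\cdot A_{j_0}$ is linear, $\frac{(t_0\mu)\cdot A_{j_0}}{2}=\pm\pi$, whence $f_2\bigl(\frac{(t_0\mu)\cdot A_{j_0}}{2}\bigr)=0$. Applying the previous paragraph to the covector $(t_0\xi,t_0\mu)$ yields $\Jac(\sre)(t_0\xi,t_0\mu)=0$ with $t_0\in(0,1]$. By Definition~\ref{def6401a951} this is incompatible with $(\xi,\mu)\in\scr D$, since membership in $\scr D$ requires $\Jac(\sre)(t(\xi,\mu))\neq0$ for every $t\in(0,1]$. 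Therefore $|\mu\cdot A_j|<2\pi$ for all $1\le j\le k$, which is the claimed inclusion.

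I do not expect a genuine obstacle once~\eqref{JacCr} is in hand; the entire content is the remark that $f_2$ is a common factor of all summands. The one point that deserves care is that the strict inequality cannot be read off from $t=1$ alone — for instance if $\mu\cdot A_{j_0}=3\pi$ then $f_2(\tfrac{3\pi}{2})\neq0$ — and this is precisely why the passage to the intermediate time $t_0$, combined with the ``for all $t\in(0,1]$'' clause in the definition of $\scr D$, is essential.
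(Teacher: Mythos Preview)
Your proof is correct and follows essentially the same approach as the paper: both arguments observe that $f_2(\tfrac{\mu\cdot A_j}{2})$ appears as a factor in every summand of~\eqref{JacCr}, so that $|\mu\cdot A_{j_0}|=2\pi$ forces $\Jac(\sre)(\xi,\mu)=0$. You make the reduction from $|\mu\cdot A_{j_0}|\ge 2\pi$ to equality via the rescaling $t_0=\tfrac{2\pi}{|\mu\cdot A_{j_0}|}$ explicit, whereas the paper compresses this into the phrase ``From definition it suffices to prove that if $|\mu\cdot A_j|=2\pi$\ldots''.
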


\begin{proof}
From definition it suffices to prove that if $|\mu \cdot A_j | = 2 \pi$ for some $j \in \{1, \dots, k\}$, then $\Jac(\sre)(\xi, \mu) = 0$. Without loss of generality, we assume that $|\mu \cdot A_1 | = 2 \pi$. In fact, it follows from \eqref{deff12} that $f_2(\pm \pi) = 0$, which implies $\Jac(\sre)(\xi, \mu) = 0$ by \eqref{JacCr}.
\end{proof}

\begin{proposition}\label{CRCE}
On step-two groups $G_A$, $N_{CE} = 2Q-n = 2k + 3m$.
\end{proposition}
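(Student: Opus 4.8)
The plan is to prove the two inequalities separately. The bound $N_{CE}\ge 2Q-n$ is immediate: we computed above that $N_{GEO}=2Q-n$, and $N_{CE}\ge N_{GEO}$ always holds. So the entire content is the reverse bound $N_{CE}\le 2Q-n$, and for this I would invoke Proposition~\ref{prop6401afb9}. Taking $N=2Q-n$ makes the weight $\lambda^{N-2Q+n}$ equal to $1$, so condition~\ref{item64267a2a_2} there reads $|\Jac(\sre)(\zeta_\lambda(\xi))|\ge|\Jac(\sre)(\xi)|$ for all $\xi\in\scr D$ and $\lambda\in[0,1]$. By Corollary~\ref{step2} the Jacobian is strictly positive on $\scr D$, and by Lemma~\ref{lem64268086} we have $\zeta_\lambda(\xi)\in\scr D$ for $\lambda\in(0,1]$; hence it suffices to show that, for each fixed $(\xi,\mu)\in\scr D$, the analytic function $\lambda\mapsto\Jac(\sre)(\xi,\lambda\mu)$ is non-increasing on $(0,1]$ (the case $\lambda=0$ being trivial). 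Here I used that in step two $\zeta_\lambda(\xi,\mu)=(\xi,\lambda\mu)$.

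For the monotonicity I would read off the structure of the explicit formula~\eqref{JacCr}. Writing $s_j:=\frac{\mu\cdot A_j}{2}$ and $g_j:=\xi_{2j-1}^2+\xi_{2j}^2\ge0$, that formula expresses $\Jac(\sre)(\xi,\lambda\mu)$ as a combination, with non-negative coefficients $4^{-m}\det(A_{i_1},\dots,A_{i_m})^2$, of products of the two scalar factors $f_2(\lambda s_j)^2$ and $f_1(\lambda s_j)f_2(\lambda s_j)\,g_j$. By Lemma~\ref{chaD}, for $(\xi,\mu)\in\scr D$ and $\lambda\in(0,1]$ every argument $\lambda s_j$ lies in $(-\pi,\pi)$. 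Since a non-negative linear combination of products of non-negative non-increasing functions is again non-increasing, the claim reduces to two one-variable statements on $(0,\pi)$: that $t\mapsto f_2(t)^2$ and $t\mapsto f_1(t)f_2(t)$ are non-increasing (both $f_1,f_2$ are even and positive on $(-\pi,\pi)$, so the sign of $s_j$ and the rescaling by $\lambda$ cause no trouble).

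The first statement is elementary: from~\eqref{deff12} one computes $f_2'(t)=\frac{t\cos t-\sin t}{t^2}<0$ on $(0,\pi)$, and $f_2>0$ there, so $f_2^2$ decreases. \textbf{The main obstacle is the second statement}, the monotonicity of $f_1f_2$. Writing $f_1(t)f_2(t)=p(t)/t^4$ with $p(t)=\sin t(\sin t-t\cos t)$, one has $\frac{d}{dt}(p/t^4)=(t\,p'(t)-4p(t))/t^5$, so the claim is that $q(t):=t\,p'(t)-4p(t)\le0$ on $(0,\pi)$. Substituting $u=2t$ turns this into $r(u)\le0$ on $(0,2\pi)$, where
\[
r(u)=5u\sin u-u^2\cos u+8\cos u-8 .
\]
I would then analyse $r$ by differentiation: $r(0)=r'(0)=0$ and $r''(u)=u\,(u\cos u-\sin u)$. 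The auxiliary function $\phi(u):=u\cos u-\sin u$ satisfies $\phi(0)=0$, $\phi'(u)=-u\sin u$, hence $\phi<0$ on $(0,\pi)$, while on $(\pi,2\pi)$ it increases from $\phi(\pi)=-\pi$ to $\phi(2\pi)=2\pi$, crossing zero exactly once at some $u_0\in(\pi,2\pi)$. Thus $r''<0$ on $(0,u_0)$ and $r''>0$ on $(u_0,2\pi)$; together with $r'(0)=0$ and $r'(2\pi)=6\pi>0$ this forces $r'$ to change sign exactly once, so $r$ first decreases then increases on $(0,2\pi)$. Since $r(0)=0$ and $r(2\pi)=-4\pi^2<0$, I conclude $r<0$ throughout $(0,2\pi)$, that is $q<0$ on $(0,\pi)$.

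Combining the two one-variable facts shows that $\lambda\mapsto\Jac(\sre)(\xi,\lambda\mu)$ is non-increasing on $(0,1]$ for every $(\xi,\mu)\in\scr D$, so that $\Jac(\sre)(\zeta_\lambda(\xi))\ge\Jac(\sre)(\xi)$ for $\lambda\in(0,1]$. This is precisely condition~\ref{item64267a2a_2} of Proposition~\ref{prop6401afb9} for $N=2Q-n$, whence $N_{CE}\le2Q-n$. Together with $N_{CE}\ge N_{GEO}=2Q-n$ this gives $N_{CE}=2Q-n=2k+3m$.
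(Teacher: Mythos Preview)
Your proof is correct and follows essentially the same route as the paper: both reduce via Proposition~\ref{prop6401afb9}\ref{item64267a2a_2}, Corollary~\ref{step2}, Lemma~\ref{chaD}, and the explicit formula~\eqref{JacCr} to the one-variable monotonicity of the factors $f_2^2$ and $f_1f_2$ on $(-\pi,\pi)$. The only difference is that the paper handles this by showing $f_1$ and $f_2$ are separately non-increasing on $[0,\pi]$ (citing \cite[Lemma~25]{BR18} for $f_1$), whereas you give a self-contained calculus proof that the product $f_1f_2$ is non-increasing; your argument is slightly longer but avoids the external reference.
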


\begin{proof}
By \ref{item64267a2a_2} of Proposition \ref{prop6401afb9} as well as Corollary~\ref{step2}, we only need to prove 
\[
\Jac(\sre)(\xi, \lambda \mu) \ge \Jac(\sre)(\xi, \mu), \quad \forall (\xi, \mu) \in \scr D, \lambda \in [0,1].
\]
In fact, noticing that the even function $f_2$ is decreasing on $[0,\pi]$, we have 
$f_2(\lambda s) \ge f_2(s)$ for $s \in [-\pi,\pi]$ and $ \lambda \in [0,1]$. 
For the even function $f_1$, \cite[Lemma 25]{BR18} implies
\[
f_1(\lambda s) \ge f_1(s), \qquad \forall  s \in [-\pi,\pi], \lambda \in [0,1].
\]
Then our proposition follows from the inequalities for $f_1, f_2$ above, Lemma \ref{chaD}, and \eqref{JacCr}.
\end{proof}

\medskip

\subsection*{Acknowledgments}
The study of the sub-Riemannian exponential map in Carnot groups of step 2 as we have developed in Section~\ref{sec6496dd08}, was taught to us by Alessio Martini.
We are very grateful to him.

We want also to thank Luca Rizzi and Kenshiro Tashiro for fruitful discussions. We would also like to thank the anonymous referee for many useful suggestions and valuable remarks which improve the writing of the paper.

SNG would like to thank the Theoretical Sciences Visiting Program (TSVP) at the Okinawa Institute of Science and Technology (OIST) for enabling his visit, and for the generous hospitality and excellent working conditions during his time there, during which part of this work was completed.

\subsection*{Funding information}
SNG has been supported by the Academy of Finland (%
grant 328846, ``Singular integrals, harmonic functions, and boundary regularity in Heisenberg groups'',
grant 322898 ``Sub-Riemannian Geometry via  Metric-geometry and Lie-group Theory'',
grant 314172 ``Quantitative rectifiability in Euclidean and non-Euclidean spaces'').

\subsection*{Conflict of interest}
Authors state no conflict of interest.

\subsection*{Author Contribution}
All authors have accepted responsibility for the entire content of this manuscript and consented to its submission to the journal, reviewed all the results and approved the final version of the manuscript.
SNG and YZ contributed equally to the manuscript. The authors applied the EC norm (the ``equal contribution'' norm) for the sequence of authors.

\subsection*{Data Availability}
Data sharing is not applicable to this article as no datasets were generated or analysed during the current study.

%%%%%%%%%%%%%%%%%%%%%%%%%%%%%%%%%%%%%%%%%%%%%%%%%%%%%%%%%%%%%%%
%%%%%%%%%%%%%%%%%%%%%%%%%%%%%%%%%%%%%%%%%%%%%%%%%%%%%%%%%%%%%%%
%\newpage

%%%%%%%%%%%%%%%%%%%%%%%%%%%%%%%%%%%%%%%%%%%%%%%%%%%%%%%%%%%%%%%
%%%%%%%%%%%%%%%%%%%%%%%%%%%%%%%%%%%%%%%%%%%%%%%%%%%%%%%%%%%%%%%

%\newpage
\printbibliography

\end{document}